\title{}
\author{}
\newcommand{\I}{{\mathchoice {\rm 1\mskip-4mu l} {\rm 1\mskip-4mu l}
{\rm 1\mskip-4.5mu l} {\rm 1\mskip-5mu l}}}
\newcommand{\Mon}{5.1}
\newenvironment{proof}{\smallskip\noindent\emph{Proof}\hspace{1pt}}%
{\hspace{-5pt}{\nobreak\quad\nobreak\hfill\nobreak$\square$\vspace{8pt}%
\par}\smallskip\goodbreak}
\newcommand{\ki}{\ensuremath{\chi_{\alpha, \beta}^{x,y}}}
\newcommand{\oGamma}{\ensuremath{\overline{\Gamma}}}
\newcommand{\oPi}{\ensuremath{\overline{\Pi}}}
\newcommand{\oH}{\ensuremath{\overline{H}}}
\newcommand{\oN}{\ensuremath{\bar{n}}}
\newcommand{\lL}{\ensuremath{\widehat{N}^+}}
\newcommand{\lU}{\ensuremath{\widehat{N}^-}}
\newcommand{\esse}{\ensuremath{\overline{\mathcal{S}}}}
\newcommand{\acca}{\ensuremath{\overline{\mathcal{H}}}}
\newtheorem{teorema}{Theorem}[section]
\newtheorem{definition}[teorema]{Definition}
\newtheorem{proposizione}[teorema]{Proposition}
\newtheorem{corollario}[teorema]{Corollary}
\newtheorem{lemma}[teorema]{Lemma}
\newtheorem{remark}[teorema]{Remark}
\newtheorem{hyp}[teorema]{Hypothesis}
\numberwithin{equation}{section}
\begin{document}
\begin{center}
\textbf{\Large Stochastic order and attractiveness\\
 for particle systems with multiple births,\\
deaths and jumps}\\
\vspace{0.5 cm}
{\sc Davide Borrello}\footnote[1]{
\begin{itemize}\item[] Dipartimento di Matematica e Applicazioni, Universit\`a di Milano Bicocca, 20125 Milano, Italy.\\
Laboratoire de Math\'ematiques Rapha\"el Salem, CNRS UMR 6085, Universit\'e de Rouen, Saint-\'Etienne-du-Rouvray, France.\\
Supported by Fondation Sciences Math\'ematiques de Paris.\\
Supported by ANR grant BLAN07-218426.
\end{itemize}
}\\
d.borrello@campus.unimib.it
\end{center}
\begin{abstract}
An approach to analyse the properties of a particle system is to compare it with different processes to understand when one of them is larger than other ones. The main technique for that is coupling, which may not be easy to construct.\\
We give a characterization of stochastic order between different interacting particle systems in a large class of processes with births, deaths and jumps of many particles per time depending on the configuration in a general way: it consists in checking inequalities involving the transition rates. We construct explicitly the coupling that characterizes the stochastic order. As a corollary we get necessary and sufficient conditions for attractiveness. As an application, we first give the conditions on examples including reaction-diffusion processes, multitype contact process and conservative dynamics and then we improve an ergodicity result for an epidemic model.\\

\noindent \textbf{Key words:}  Stochastic order, attractiveness, coupling, interacting particle systems, epidemic model, multitype contact process.\\

\noindent \textbf{AMS 2010 subject classification:} Primary 60K35; Secondary 82C22.\\

\noindent Submitted to EJP on Mars 22, 2010, final version accepted on December 18, 2010.

\end{abstract}
\newpage
\section{Introduction}
\label{intr}

The use of interacting particle systems to study biological models is becoming more and more fruitful. In many biological applications a particle represents an individual from a species which interacts with others in many different ways; the empty configuration is often an absorbing state and corresponds to the extinction of that species. An important problem is to find conditions which give either the survival of the species, or the almost sure extinction. When the population in a system is always larger (or smaller) than the number of individuals of another one there is a \textit{stochastic order} between the two processes and one can get information on the larger population starting from the smaller one and vice-versa.

\textit{Attractiveness} is a property concerning the distribution at time $t$ of two processes \textit{with the same generator}: if a process is attractive the stochastic order between two processes starting from different configurations is preserved in the time evolution (see Section $\ref{background}$).

The main technique to check if there is stochastic order between two systems is coupling: if the transitions are intricate an increasing coupling may be hard to construct. The main result of the paper (Theorem \ref{cns}, Section \ref{background}) gives a characterization of the stochastic order (resp. attractiveness) in a large class of interacting particle systems: in order to verify if two particle systems are stochastically ordered (resp. one particle system is attractive), we are reduced to check inequalities involving the transition rates.\\

A first motivation is a general understanding of the ordering conditions between two processes. The analysis of interacting particle systems began with Spin Systems, that are processes with state space $\{0,1\}^{\mathbb{Z}^{d}}$. We refer to \cite{cf:Liggett} and \cite{cf:Liggett2} for construction and main results. The most famous examples are Ising model, contact process and voter model. These processes have been largely investigated, in particular their attractiveness (see \cite[Chapter III, Section 2]{cf:Liggett}). Many other models taking place on $X^{\mathbb{Z}^{d}}$, where $X=\{0,1,\ldots M\}\subseteq \mathbb{N}$, that is with more than one particle per site, have been studied. Reaction-diffusion processes, for example, are processes with state space $\mathbb{N}^{\mathbb{Z}^{d}}$ (hence non compact), used to model chemical reactions. We refer to \cite[Chapter $13$]{cf:Chen} for a general introduction and construction. In such particle systems a birth, death or jump of at most one particle per time is allowed. But sometimes the model requires births or deaths of more than one particle per time. This is the case of biological systems with mass extinction (\cite{cf:schi_clusters}, \cite{cf:schi_allee}), or multitype contact process (\cite{cf:durrett_ten}, \cite{cf:durrett_mutual}, \cite{cf:neuhauser_multi}, \cite{cf:schi_drug}). A partial understanding of attractiveness properties can be found in \cite{cf:Stover} for the multitype contact process. A system with jumps of many particles per time has been investigated in \cite[Theorem $2.21$]{cf:GobronSaada}, where the authors found necessary and sufficient conditions for attractiveness for a conservative particle system with multiple jumps on $\mathbb{N}^{\mathbb{Z}^d}$ with misanthrope type rates.

Those examples and the need for more realistic models for metapopulation dynamics systems (\cite{cf:Hanski}) has led us to consider systems ruled by births, deaths and migrations of more than one individual per time with general transition rates to get an exhaustive analysis of the stochastic order behaviour and attractiveness. Our method relies on \cite{cf:GobronSaada}, that it generalizes.\\

The main applications allow to investigate the ergodic properties of a process. A process is ergodic if there exists a unique invariant measure to which it converges starting from any initial configuration: if the process is attractive, it is enough to check the convergence of the largest and the smallest initial configurations. This is a first application of Theorem \ref{cns}. In Section \ref{IRM} we combine attractiveness and a technique called $u-$criterion (see \cite{cf:Chen}) to get ergodicity conditions on a model for spread of epidemics, either if there is a trivial invariant measure or not.\\

For many biological models the empty configuration $\underline{0}$ is an absorbing state and the main question is if the particle system may survive, that is if there is a positive probability that the process does not converge to the Dirac measure $\delta_{\underline{0}}$ concentrated on $\underline{0}$, which is a trivial invariant measure.
In order to prove that a metapopulation dynamics model (see \cite{cf:Borrello}) survives, we largely use comparison (therefore stochastic order) with auxiliary processes: this is a second application of the result. Instead of constructing a different coupling for each comparison, we just check that inequalities of Theorem \ref{cns} are satisfied on the transition rates. Moreover the main technique we use to get survival is a comparison with oriented percolation (see \cite{cf:durrett_ten}), and attractiveness is a key tool in many steps of the proofs.\\ 

The survival of a process does not imply the existence of a non trivial invariant measure: one can have the presence of particles in the system for all times but no invariant measures. If the process is attractive and the state space is compact, a standard approach allows to construct such a measure starting from the largest initial configuration: this is the third application. Once we get survival, we use this argument to construct non trivial invariant measures for metapopulation dynamics models. In Section \ref{meta} we introduce a metapopulation dynamics model with mass migration and Allee effect investigated in \cite{cf:Borrello}.\\

The transition rates of the particle systems we analyse in this paper depend on two sites $x$, $y$, on the number of particles at $x$ and $y$ and on the number of particles $k$ involved in a transition: they are of the form $b(k,\alpha,\beta)p(x,y)$, where $\alpha$ and $\beta$ are respectively the number of particles on $x$ and $y$ and $p(x,y)$ is a probability distribution on $\mathbb{Z}^{d}$ given by a bistochastic matrix (we require neither symmetry nor translation invariance). Moreover we allow birth and death rates on a site $x$ depending only on the configuration state on $x$.

In other words we work with three different types of transition rates: given a configuration $\eta$, on each site $y$ we can have a \textit{birth} (\textit{death}) of $k$ individuals depending on the configuration state $\eta(y)$ on the same site $y$ with rate $P^{k}_{\eta(y)}$ ($P^{-k}_{\eta(y)}$) and depending also on the number of particles on the other sites $x\neq y$ with rate $\sum_{x}R^{0,k}_{\eta(x),\eta(y)}p(x,y)$ ($\sum_{x}R^{-k,0}_{\eta(y),\eta(x)}p(y,x)$). 
We consider a death rate $R^{-k,0}_{\eta(y),\eta(x)}p(y,x)$ instead of the more natural $R^{-k,0}_{\eta(y),\eta(x)}p(x,y)$ to simplify the proofs and since we are interested in applications given by a symmetric probability distribution $p(\cdot,\cdot)$. This represents a possible different interaction rule between individuals of the same population and individuals from different populations. We can have a \text{jump} of $k$ particles from $x$ to $y$ with rate $\Gamma^{k}_{\eta(x),\eta(y)}p(x,y)$, which represents a migration of a flock of individuals (see Section $\ref{background}$). We require that the birth/death and jump rates differ only on the term $b(k,\alpha,\beta)$, that is the conservative and non-conservative rates depend on the same probability distribution $p(x,y)$.\\

In Section $\ref{background}$ we recall some classical definitions and propositions needed in the sequel, we introduce the particle system with more details and we state the main result, Theorem \ref{cns}. In Section $\ref{examples}$ we derive the conditions on several examples (multitype contact processes, conservative dynamics and reaction-diffusion processes); we also detail the conditions on models with transitions of at most one particle per time. In Section \ref{IRM} we apply the attractiveness conditions and the so-called $u$-criterion technique to improve an ergodicity result for a model of spread of epidemics. Others applications to the construction of non-trivial invariant measures in metapopulation dynamics models will be presented in a subsequent paper (see \cite{cf:Borrello}). In Section \ref{Proofs} we prove Theorem \ref{cns}: the coupling is constructed explicitly through a downwards recursive formula in Section \ref{coupling}, where a detailed analysis of the coupling mechanisms is presented. We have to mix births, jumps and deaths in a non trivial way by following a preferential direction. Section $\ref{proof_appl}$ is devoted to the proofs needed for the application to the epidemic model. Finally we propose some possible extensions to more general systems.

\section{Main result and applications}

\subsection{Stochastic order an attractiveness}
\label{background}

Denote by $S=\mathbb{Z}^{d}$ the set of sites and let $X \subseteq \mathbb{N}$ be the set of possible states on each site of an interacting particle system $(\eta_{t})_{t\geq 0}$ on the state space $\Omega=X^{S}$, with semi-group $T(t)$ and infinitesimal generator $\mathcal{L}$ given, for a local function $f$, by
\begin{align}
\mathcal{L}f(\eta)=&\sum_{x,y \in S}\sum_{\alpha,\beta \in X}\ki (\eta)p(x,y)\sum_{k > 0}\Big(\Gamma^{k}_{\alpha,\beta} (f(S^{-k,k}_{x,y}\eta)-f(\eta))+ \nonumber \\
&+(R^{0,k}_{\alpha,\beta}+P^{k}_{\beta})(f(S^{k}_{y}\eta)-f(\eta))+(R^{-k,0}_{\alpha,\beta}+P^{-k}_{\alpha})(f(S^{-k}_{x}\eta)-f(\eta))\Big)
\label{gen}
\end{align}
where $\ki$ is the indicator of configurations with values $(\alpha,\beta)$ on $(x,y)$, that is
\begin{equation*}
\ki(\eta)=\left\{
\begin{array}{ll}
1 & \text{if }\eta(x)=\alpha \text{ and } \eta(y)=\beta\\
0 & \text{ otherwise}
\end{array}
\right.
\end{equation*}
and $S^{-k,k}_{x,y}$, $S^{k}_{y}$, $S^{-k}_{y}$, where $k > 0$, are local operators performing the transformations whenever possible
\begin{align}
(S^{-k,k}_{x,y}\eta)(z)=&\left\{
\begin{array}{ll}
\eta(x)-k & \text{ if } z=x \text{ and }\eta(x)-k \in X, \eta(y)+k \in X\\
\eta(y)+k & \text{ if } z=y \text{ and }\eta(x)-k \in X, \eta(y)+k \in X\\
\eta(z) & \text{otherwise;}
\end{array}
\right.
\label{intr:Skk}\\
\nonumber\\
(S^{k}_{y}\eta)(z)=&\left\{
\begin{array}{ll}
\eta(y)+k & \text{ if } z=y \text{ and } \eta(y)+k \in X\\
\eta(z) & \text{otherwise;}
\end{array}
\right.
\label{intr:Sk}\\
\nonumber\\
(S^{-k}_{y}\eta)(z)=&\left\{
\begin{array}{ll}
\eta(y)-k & \text{ if } z=y \text{ and } \eta(y)-k \in X\\
\eta(z) & \text{otherwise.}
\end{array}
\right.
\label{intr:S-k}
\end{align}
The transition rates have the following meaning:
\begin{itemize}
\item[-]$p(x,y)$ is a bistochastic probability distribution on $\mathbb{Z}^{d}$;
\item[-]$\Gamma_{\alpha,\beta}^{k}p(x,y)$ is the jump rate of $k$ particles from $x$, where $\eta(x)=\alpha$, to $y$, where $\eta(y)=\beta$;
\item[-] $R_{\alpha,\beta}^{0,k}p(x,y)$ is the part of the birth rate of $k$ particles in $y$ such that $\eta(y)=\beta$ which depends on the value of $\eta$ in $x$ (that is $\alpha$);
\item[-]$R_{\alpha,\beta}^{-k,0}p(x,y)$ is the part of the death rate of $k$ particles in $x$ such that $\eta(x)=\alpha$ which depends on the value of $\eta$ in $y$ (that is $\beta$); 
\item[-]$P_{\beta}^{\pm k}$ is the birth/death rate of $k$ particles in $\eta(y)=\beta$ which depends only on the value of $\eta$ in $y$: we call it an \emph{independent} birth/death rate.
\end{itemize}
We call \emph{addition} on $y$ (\emph{subtraction} from $x$) of $k$ particles the birth on $y$ (death on $x$) or jump from $x$ to $y$ of $k$ particles. By convention we take births on the right subscript and deaths on the left one: formula ($\ref{gen}$) involves births upon $\beta$, deaths from $\alpha$ and a fixed direction, from $\alpha$ to $\beta$, for jumps of particles.
We define, for notational convenience
\begin{equation}
\Pi^{0,k}_{\alpha,\beta}:=R_{\alpha,\beta}^{0,k}+P_{\beta}^{k};\qquad \Pi^{-k,0}_{\alpha,\beta}:=R_{\alpha,\beta}^{-k,0}+P_{\alpha}^{-k}.
\end{equation}
We refer to $\cite{cf:Liggett}$ for the classical construction in a compact state space. Since we are interested also in non compact cases, we assume that $(\eta_{t})_{t\geq 0}$ is a well defined Markov process on a subset $\Omega_{0} \subset \Omega$, and for any bounded local function $f$ on $\Omega_{0}$,
\begin{equation}
\forall \eta \in \Omega_{0}, \quad \lim_{t \to 0}\frac{T(t)f(\eta)-f(\eta)}{t}=\mathcal{L}f(\eta)< \infty.
\label{defM}
\end{equation}
We will be more precise on the induced conditions on transition rates in the examples. We state here only a common necessary condition on the rates.
\begin{hyp}
We assume that for each $(\alpha,\beta) \in X^{2}$
\begin{align*}
&N(\alpha,\beta):=\sup\{n:\Gamma^{n}_{\alpha,\beta}+\Pi^{0,n}_{\alpha,\beta}+\Pi^{-n,0}_{\alpha,\beta}> 0\}<\infty,\\
&\sum_{k>0}\Big(\Gamma^{k}_{\alpha,\beta}+\Pi^{0,k}_{\alpha,\beta}+\Pi^{-k,0}_{\alpha,\beta}\Big)<\infty.
\end{align*}
\label{hip}
\end{hyp}
In other words, for each $\alpha,\beta$ there exists a maximal number of particles involved in birth, death and jump rates.
Notice that $N(\alpha,\beta)$ is not necessarily equal to $N(\beta,\alpha)$, which involves deaths from $\beta$, births upon $\alpha$ and jumps from $\beta$ to $\alpha$.\\
The particle system admits an \textit{invariant measure} $\mu$ if $\mu$ is such that $P_{\mu}(\eta_{t}\in A)=\mu(A)$ for each $t\geq 0$, $A \subseteq \Omega$, where $P_{\mu}$ is the law of the process starting from the initial distribution $\mu$. An invariant measure is \emph{trivial} if it is concentrated on an absorbing state when there exists any. The process is \textit{ergodic} if there is a unique invariant measure to which the process converges starting from any initial distribution (see \cite[Definition 1.9]{cf:Liggett}).\\
Given two processes $(\xi_{t})_{t \geq 0}$ and $(\zeta_{t})_{t \geq 0}$, a coupled process ($\xi_{t},\zeta_{t})_{t\geq0}$ is Markovian with state space $\Omega_{0} \times \Omega_{0}$, and such that each marginal is a copy of the  original process.\\
We define a partial order on the state space:
\begin{equation}
\forall \xi,\zeta \in \Omega,\hspace{1 cm}\xi \leq \zeta \Leftrightarrow (\forall x \in S, \xi(x) \leq \zeta(x)).
\end{equation}
A set $V \subset \Omega$ is \emph{increasing} (resp. \emph{decreasing}) if for all $\xi \in V$, $\eta \in \Omega$ such that $\xi \leq \eta$ (resp.  $\xi \geq \eta$ ) then $\eta \in V$. For instance if $\xi \in \Omega$, then $I_{\xi}=\{\eta \in \Omega: \xi\leq \eta\}$ is an increasing set, and $D_{\xi}=\{\eta \in \Omega: \xi \geq \eta\}$ a decreasing one.\\
A function $f:\Omega \to \mathbb{R}$ is \emph{monotone} if
$$
\forall \xi,\eta \in \Omega, \hspace{1 cm}\xi \leq \eta \Rightarrow f(\xi)\leq f(\eta).
$$
We denote by $\mathcal{M}$ the set of all bounded, monotone continuous functions on $\Omega$. 
The partial order on $\Omega$ induces a stochastic order on the set $\mathcal{P}$ of probability measures on $\Omega$ endowed with the weak topology:
\begin{equation}
\forall \nu,\nu' \in \mathcal{P}, \hspace{1 cm} \nu \leq \nu'  \Leftrightarrow (\forall f \in \mathcal{M}, \nu(f)\leq \nu'(f)).
\end{equation}
The following theorem is a key result to compare distributions of processes with \textit{different generators} starting with different initial distributions. 
\begin{teorema}
Let $(\xi_{t})_{t \geq 0}$ and $(\zeta_{t})_{t \geq 0}$ be two processes with generators $\widetilde{\mathcal{L}}$ and $\mathcal{L}$ and semi-groups $\widetilde{T}(t)$ and $T(t)$ respectively. The following two statements are equivalent:
\begin{itemize}
\item[(a)]$f \in \mathcal{M}$ and $\xi_{0}\leq \zeta_{0}$ implies $\widetilde{T}(t)f(\xi_{0})\leq T(t)f(\zeta_{0}) $ for all $t \geq 0$.
\item[(b)]For $\nu,\nu' \in \mathcal{P}, \nu \leq \nu'$ implies $\nu \widetilde{T}(t)\leq \nu' T(t)$ for all $t \geq 0$.
\end{itemize}
\label{Genlig1}
\end{teorema}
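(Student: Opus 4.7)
The plan is to prove the two implications separately, each following the standard pattern for this kind of ordering equivalence between different Markov semigroups.

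For the easy direction, I would show that (b) implies (a). Given $\xi_0 \leq \zeta_0$ in $\Omega$, consider the Dirac measures $\nu = \delta_{\xi_0}$ and $\nu' = \delta_{\zeta_0}$. For any $f \in \mathcal{M}$ we have $\nu(f) = f(\xi_0) \leq f(\zeta_0) = \nu'(f)$, so $\nu \leq \nu'$ in the stochastic order on $\mathcal{P}$. Applying (b) gives $\nu \widetilde{T}(t) \leq \nu' T(t)$, which when tested against an arbitrary $f \in \mathcal{M}$ reads $\widetilde{T}(t)f(\xi_0) \leq T(t)f(\zeta_0)$, exactly the conclusion of (a).

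For the harder direction, (a) implies (b), I would invoke Strassen's theorem on the coupling characterization of stochastic order: if $\nu \leq \nu'$ in $\mathcal{P}$, there exists a probability measure $\pi$ on $\Omega \times \Omega$ whose marginals are $\nu$ and $\nu'$ and which is concentrated on the closed set $\{(\xi,\zeta) \in \Omega \times \Omega : \xi \leq \zeta\}$. For an arbitrary $f \in \mathcal{M}$, disintegrating through $\pi$ and applying (a) pointwise on the support gives
\begin{align*}
\int f \, d(\nu \widetilde{T}(t)) &= \int \widetilde{T}(t)f(\xi)\, d\nu(\xi) = \int \widetilde{T}(t)f(\xi)\, d\pi(\xi,\zeta)\\
&\leq \int T(t)f(\zeta)\, d\pi(\xi,\zeta) = \int T(t)f(\zeta)\, d\nu'(\zeta) = \int f \, d(\nu' T(t)).
\end{align*}
Since this holds for every $f \in \mathcal{M}$, we conclude $\nu \widetilde{T}(t) \leq \nu' T(t)$.

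The main technical point is the availability of Strassen's coupling theorem in the required generality, i.e.\ on the (possibly non compact) Polish space $\Omega = X^{S}$ with the product topology and the coordinatewise order; once this is granted, the argument is essentially bookkeeping. A minor measurability check is that the map $\xi \mapsto \widetilde{T}(t)f(\xi)$ is measurable and bounded for $f$ bounded continuous monotone, so that the disintegration integrals above are well defined; this follows from the Feller-type regularity embedded in the construction of the semigroups through (\ref{defM}). I expect no further obstacle, and I would simply remark that this is an adaptation of the classical equivalence for a single generator (cf.\ \cite{cf:Liggett}) to the two-generator comparison setting needed in the sequel.
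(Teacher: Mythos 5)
Your proof is correct and is exactly the ``slight modification of \cite[proof of Theorem II.2.2]{cf:Liggett}'' the paper alludes to: the direction (b) $\Rightarrow$ (a) via Dirac masses is verbatim Liggett's argument, while (a) $\Rightarrow$ (b) must replace Liggett's direct step (which uses that $T(t)f\in\mathcal{M}$, unavailable here since neither $\widetilde{T}(t)f$ nor $T(t)f$ need be monotone individually) by a Strassen coupling $\pi$ of $\nu,\nu'$ concentrated on $\{\xi\le\zeta\}$, and then integrating the pointwise inequality from (a) against $\pi$. You also correctly flag the only genuine technical point, namely that Strassen's theorem is available on the Polish product space $\Omega=X^{S}$ (or $\Omega_0$) with the closed coordinatewise order.
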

The proof is a slight modification of \cite[proof of Theorem II.2.2]{cf:Liggett}. 
\begin{definition}
A process $(\zeta_{t})_{t\geq 0}$ is \emph{stochastically larger} than a process $(\xi_{t})_{t\geq 0}$ if the equivalent conditions of Theorem $\ref{Genlig1}$ are satisfied. In this case the process $(\xi_{t})_{t\geq 0}$ is stochastically smaller than $(\zeta_{t})_{t\geq 0}$ and the pair $(\xi_{t},\zeta_{t})_{t\geq 0}$ is stochastically ordered.
\label{stoc_order}
\end{definition}
Attractiveness is a property concerning the distribution at time $t$ of two processes \textit{with the same generator} which start with different initial distributions. By taking $\widetilde{T}=T$, Theorem \ref{Genlig1} reduces to \cite[Theorem II.2.2]{cf:Liggett} and Definition \ref{stoc_order} is equivalent to the definition of an attractive process (see \cite[Definition II.2.3]{cf:Liggett}). If an attractive process in a compact state space starts from the larger initial configuration, it converges to an invariant measure.\\
 
The main result gives necessary and sufficient conditions on the transition rates that yield stochastic order or attractiveness of the class of interacting particle systems defined by (\ref{gen}). It extends \cite[Theorem 2.21]{cf:GobronSaada}.\\
We denote by $\mathcal{S}=\mathcal{S}(\Gamma,R,P,p)=\{\Gamma^{k}_{\alpha,\beta}, R^{0,k}_{\alpha, \beta},R^{-k,0}_{\alpha, \beta}, P^{\pm k}_{\beta},p(x,y)\}_{\{\alpha,\beta \in X, k>0,(x,y)\in S^{2}\}}$, the set of parameters that characterize the generator ($\ref{gen}$) of process $(\eta_{t})_{t\geq 0}$. We call the latter an $\mathcal{S}$ \emph{particle system} and we write $\eta_{t}\sim \mathcal{S}$. Given $(\alpha,\beta),(\gamma,\delta) \in X^2\times X^2$, we write $(\alpha,\beta)\leq (\gamma,\delta)$ if $\alpha\leq \gamma$ and $\beta\leq \delta$.\\
Let $\mathcal{S}=\mathcal{S}(\Gamma,R,P,p)$ and $\widetilde{\mathcal{S}}=\mathcal{S}(\widetilde{\Gamma},\widetilde{R},\widetilde{P},p)$ be two processes with different transition rates (but the same $p$). For all $(\alpha,\beta) \in X^2$, let $\widetilde{N}(\alpha,\beta)$ be defined as $N(\alpha,\beta)$ in Hypothesis \ref{hip} using the transition rates of $\widetilde{\mathcal{S}}$.
\begin{teorema}
Given $K\in \mathbb{N}$, $\mathbf{j}:=\{j_{i}\}_{i \leq K}$, $\mathbf{m}:=\{m_{i}\}_{i \leq K}$, $\mathbf{h}:=\{h_{i}\}_{i \leq K}$, three non-decreasing $K$-uples in $\mathbb{N}$, and $\alpha,\beta,\gamma,\delta$ in $X $ such that $\alpha\leq \gamma$, $\beta\leq \delta$, we define
\begin{eqnarray}
I_{a}:=&I^{K}_{a}(\mathbf{j},\mathbf{m})=&\bigcup^{K}_{i=1}\{k\in X:m_{i}\geq  k> \delta-\beta+j_{i}\}
\label{Ia}\\
I_{b}:=&I^{K}_{b}(\mathbf{j},\mathbf{m})=&\bigcup^{K}_{i=1}\{k\in X:\gamma -\alpha +m_{i} \geq k > j_{i}\}
\label{Ib}\\
I_{c}:=&I^{K}_{c}(\mathbf{h},\mathbf{m})=&\bigcup^{K}_{i=1}\{k\in X:m_{i}\geq k > \gamma-\alpha+h_{i}\}
\label{Ic}\\
I_{d}:=&I^{K}_{d}(\mathbf{h},\mathbf{m})=&\bigcup^{K}_{i=1}\{k\in X:\delta -\beta +m_{i} \geq k > h_{i}\}
\label{Id}
\end{eqnarray}
An $\mathcal{S}$ particle system $(\eta_{t})_{t \geq 0}$ is \emph{stochastically larger} than an $\widetilde{\mathcal{S}}$ particle system $(\xi_{t})_{t \geq 0}$ if and only if 
\begin{align}
\sum_{k\in X:k > \delta-\beta+j_{1}}\widetilde{\Pi}_{\alpha, \beta}^{0,k}+\sum_{k \in I_{a}}\widetilde{\Gamma}_{\alpha, \beta}^{k}\leq &
\sum_{l\in X:l>j_{1}}\Pi_{\gamma, \delta}^{0,l}+\sum_{l \in I_{b}}\Gamma_{\gamma, \delta}^{l}
\label{C+}\\
\sum_{k\in X:k>h_{1}}\widetilde{\Pi}_{\alpha, \beta}^{-k,0}+\sum_{k \in I_{d}}\widetilde{\Gamma}_{\alpha, \beta}^{k}\geq &
\sum_{l\in X: l > \gamma-\alpha+h_{1}}\Pi_{\gamma, \delta}^{-l,0}+\sum_{l \in I_{c}}\Gamma_{\gamma, \delta}^{l}
\label{C-}
\end{align}
for all choices of $K\leq \widetilde{N}(\alpha,\beta)\vee N(\gamma,\delta)$, $\mathbf{h}$, $\mathbf{j}$, $\mathbf{m}$, $\alpha\leq \gamma$, $\beta\leq \delta$.
\label{cns}
\end{teorema}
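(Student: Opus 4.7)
The strategy is to use the equivalence (a)$\Leftrightarrow$(b) of Theorem~\ref{Genlig1}. The necessity of (\ref{C+}) and (\ref{C-}) is obtained by testing the generator inequality against a sufficiently rich family of monotone indicator functions, while the sufficiency is proved by constructing an explicit order-preserving coupling of the two dynamics whose marginals have generators $\widetilde{\mathcal L}$ and $\mathcal L$. The bulk of the work is the sufficiency direction.

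For necessity, I would differentiate $T(t) f(\zeta_0) - \widetilde T(t) f(\xi_0) \geq 0$ at $t=0$ to obtain the pointwise inequality $\mathcal L f(\zeta_0) \geq \widetilde{\mathcal L} f(\xi_0)$ whenever $\xi_0 \leq \zeta_0$ and $f$ is monotone. Fixing $(\alpha,\beta) \leq (\gamma,\delta)$ and sites $x \neq y$, I would choose $\xi_0 \leq \zeta_0$ that agree off $\{x,y\}$ with values $(\alpha,\beta)$ and $(\gamma,\delta)$ at $(x,y)$, arranged so that the contributions of site-pairs other than $(x,y)$ cancel in the generator difference. Monotone $\{0,1\}$-valued functions of the two variables $(\eta(x),\eta(y)) \in \mathbb N^2$ are exactly indicators of finite unions of upper-right corners, and are therefore parametrised by non-decreasing sequences of threshold pairs. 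Plugging into the generator inequality the family of indicators probing upward crossings at $y$ with levels encoded by $(\mathbf j, \mathbf m)$ reproduces exactly the linear combination of rates in (\ref{C+}); the symmetric family probing downward crossings at $x$ with levels $(\mathbf h, \mathbf m)$ yields (\ref{C-}).

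For sufficiency, since the generator (\ref{gen}) is a sum over site-pairs, it suffices to build, for each $(x,y)$ and each ordered pair of values $(\alpha,\beta) \leq (\gamma,\delta)$, a family of non-negative coupled rates whose two marginals agree with $\widetilde{\mathcal S}$ on the $\xi$-side and with $\mathcal S$ on the $\zeta$-side and whose joint transitions all preserve the coordinatewise order. I would define these rates by a downwards recursion on the number of particles involved: greedily match jumps in one process with jumps in the other, then dominate each addition on $y$ of the $\xi$-process by an addition on $y$ of the $\zeta$-process large enough that the new values still satisfy $\xi(y)+k \leq \zeta(y)+\ell$, and symmetrically dominate subtractions from $x$ of the $\zeta$-process by subtractions from $x$ of the $\xi$-process. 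Inequalities (\ref{C+}) and (\ref{C-}), quantified over all admissible $(K,\mathbf j,\mathbf m,\mathbf h)$, are precisely the system that guarantees that at every step of this recursion the residual coupled rates remain non-negative, so the coupling is well defined and, by Hypothesis~\ref{hip}, Markovian on $\Omega_0 \times \Omega_0$.

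The main obstacle is the coupling construction itself: jumps simultaneously modify $x$ and $y$ and thus entangle additions on $y$ with subtractions from $x$. A single $\zeta$-jump may be paired with a $\xi$-jump, with a $\xi$-birth-and-death pair, or be left to dominate independent $\xi$-rates, and choosing the right preferential order (jumps first, then births on $y$ against deaths from $x$, then residual independent pieces) so that no coupled rate becomes negative along the recursion is the combinatorial heart of the proof. It is also what forces the use of arbitrary non-decreasing $K$-tuples $(\mathbf j,\mathbf m,\mathbf h)$ in the statement rather than a single threshold, since each step of the recursion introduces a new threshold that must be controlled.
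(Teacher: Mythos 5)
Your overall plan matches the paper's: necessity by testing the generator inequality at $t=0$ against monotone indicator functions (the paper uses unions of increasing cylinder sets $I_y(n)$, then lets $n\to\infty$ and homogenises $\alpha_z\equiv\alpha$, $\gamma_z\equiv\gamma$); sufficiency by an explicit increasing coupling built site-pair by site-pair (Proposition~\ref{attpairs}) via a downwards recursion in the number of particles moved, mixing jumps, births and deaths in a preferential order (Steps 1--3 of Section~\ref{coupling}).

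There are, however, two points where your description of the sufficiency step does not reflect what the conditions actually do and what the proof actually requires. First, you write that (\ref{C+})--(\ref{C-}) ``guarantee that at every step of this recursion the residual coupled rates remain non-negative, so the coupling is well defined.'' In the paper's construction all coupling rates are minima of non-negative quantities, so non-negativity and the coupling property are automatic; the conditions are not needed for the coupling to exist. What they guarantee is that the coupling is \emph{increasing}: Proposition~\ref{imp} shows that every coupling term whose joint transition would break the partial order (additions on $y$ with $\beta+k>\delta+l$, or subtractions from $x$ with $\gamma-l<\alpha-k$) is forced to be zero by (\ref{C+})--(\ref{C-}). Without this reformulation your argument does not actually establish order preservation. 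Second, you omit the structural induction that carries the burden of the proof: the paper does not simply ``recurse downwards''; it introduces a reduced system $\esse$ obtained by subtracting from $\mathcal{S}$ all coupled rates involving $N$ particles, proves that $\oN\le N-1$ (Proposition~\ref{NLU}), proves the highly non-trivial fact that $\esse$ \emph{still satisfies} Conditions (\ref{C+})--(\ref{C-}) (Proposition~\ref{attrdue}, the technical core), and only then invokes the induction hypothesis via Proposition~\ref{HHbar}. That stability of the rate inequalities under the reduction $\mathcal{S}\mapsto\esse$ is the genuine obstacle, and it is precisely where the freedom of choosing $K$-tuples $\mathbf j,\mathbf m,\mathbf h$ rather than single thresholds is used (Remark~\ref{key}). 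Your proposal does not identify this step, so as written the sufficiency direction has a real gap.
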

\begin{remark}
The restriction $K\leq \widetilde{N}(\alpha,\beta)\vee N(\gamma,\delta)$ avoids that an infinite number of $K$, $I_a$, $I_b$, $I_c$, $I_d$ result in the same rate inequality. Since $\widetilde{\Gamma}_{\alpha, \beta}^{k}=0$ for each $k>\widetilde{N}(\alpha,\beta)$, if $K>\widetilde{N}(\alpha,\beta)$ no terms are being added to the left hand side of (\ref{C+}), and adding more terms on the right hand side does not give any new restrictions.
A similar statement holds for (\ref{C-}), with the corresponding condition $K\leq N(\gamma,\delta)$.\\

\end{remark}
\begin{remark}
If $\mathcal{S}=\widetilde{\mathcal{S}}$, Theorem $\ref{cns}$ states necessary and sufficient conditions for attractiveness of $\mathcal{S}$.
\end{remark}
We follow the approach in \cite{cf:GobronSaada}: in order to characterize the stochastic ordering of two processes, first of all we find necessary conditions on the transition rates. Then we construct a Markovian increasing coupling, that is a coupled process $(\xi_{t},\zeta_{t})_{t \geq 0}$ which has the property that $\xi_{0}\leq\zeta_{0}$ implies
$$
P^{(\xi_{0},\zeta_{0})}\{\xi_{t}\leq \zeta_{t}\}=1,
$$
for all $t \geq 0$. Here $P^{(\xi_{0},\zeta_{0})}$ denotes the distribution of $(\xi_{t},\zeta_{t})_{t\geq 0}$ with initial state $(\xi_{0},\zeta_{0})$.

\subsection{Applications}
\label{examples}

We propose several applications to understand the meaning of Conditions (\ref{C+})--(\ref{C-}). We think of $(\alpha,\beta)\leq (\gamma,\delta)$ as the configuration state of two processes $\eta_t\leq \xi_t$ on a fixed pair of sites $x$ and $y$: namely $\eta_t(x)=\alpha$, $\eta_t(y)=\beta$,  $\xi_t(x)=\gamma$, $\xi_t(y)=\delta$.
\subsubsection{No multiple births, deaths or jumps}
\label{esempioM1}
Let $\displaystyle \overline{N}=\sup_{(\alpha,\beta) \in X^2}N(\alpha,\beta)$:
\begin{proposizione}
If $\overline{N}=1$ then a change of at most one particle per time is allowed and Conditions ($\ref{C+}$) and ($\ref{C-}$) become
\begin{align}
\widetilde{\Pi}_{\alpha, \beta}^{0,1}+\widetilde{\Gamma}_{\alpha, \beta}^{1}\leq & \Pi_{\gamma, \delta}^{0,1}+\Gamma_{\gamma, \delta}^{1}   & \text {if } \beta=\delta \text{ and } \gamma \geq \alpha,
\label{C+1}\\
\widetilde{\Pi}_{\alpha, \beta}^{0,1}\leq & \Pi_{\gamma, \delta}^{0,1}  & \text {if } \beta=\delta \text{ and } \gamma = \alpha,
\label{C+10}\\
\widetilde{\Pi}_{\alpha, \beta}^{-1,0}+\widetilde{\Gamma}_{\alpha, \beta}^{1}\geq &\Pi_{\gamma, \delta}^{-1,0}+\Gamma_{\gamma, \delta}^{1}   & \text {if } \gamma=\alpha \text{ and } \delta \geq \beta,
\label{C-1}\\
\widetilde{\Pi}_{\alpha, \beta}^{-1,0}\geq &\Pi_{\gamma, \delta}^{-1,0} & \text {if } \gamma=\alpha \text{ and } \delta = \beta.
\label{C-10}
\end{align}
\label{condM1}
\end{proposizione}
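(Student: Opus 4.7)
The plan is to specialize Theorem \ref{cns} to the case $\overline{N}=1$ and to unfold the definitions of the index sets $I_a, I_b, I_c, I_d$ under this restriction. Since all rates $\Gamma^k$, $\Pi^{0,k}$, $\Pi^{-k,0}$ (tildes included) vanish for $k\geq 2$, the only index contributing to the four sums in (\ref{C+})--(\ref{C-}) is $k=l=1$. Moreover the constraint $K\leq \widetilde{N}(\alpha,\beta)\vee N(\gamma,\delta)\leq 1$ forces $K\in\{0,1\}$: the value $K=0$ yields the trivial inequality $0\leq 0$, so it suffices to take $K=1$, in which case the tuples $\mathbf{j}, \mathbf{m}, \mathbf{h}$ reduce to single non-negative integers $j_1, m_1, h_1$.

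First I would analyse (\ref{C+}). The term $\widetilde{\Pi}^{0,1}_{\alpha,\beta}$ belongs to $\sum_{k>\delta-\beta+j_1}\widetilde{\Pi}^{0,k}_{\alpha,\beta}$ iff $\delta-\beta+j_1<1$, that is, iff $\beta=\delta$ and $j_1=0$; in every other case the left-hand side vanishes and the inequality holds trivially (the right-hand side being non-negative). Hence only the regime $\beta=\delta$, $j_1=0$ produces a real constraint. Within this regime, $1\in I_a$ iff $m_1\geq 1$ and $1\in I_b$ iff $\gamma-\alpha+m_1\geq 1$. Choosing $m_1\geq 1$ with any $\gamma\geq\alpha$ yields (\ref{C+1}); choosing $m_1=0$ with $\gamma=\alpha$ yields (\ref{C+10}); the only other subcase, $m_1=0$ with $\gamma>\alpha$, gives $\widetilde{\Pi}^{0,1}_{\alpha,\beta}\leq \Pi^{0,1}_{\gamma,\delta}+\Gamma^1_{\gamma,\delta}$, which is implied by (\ref{C+1}) and hence redundant. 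The analysis of (\ref{C-}) is completely symmetric: by inspecting when the right-hand side is non-trivial one finds that the informative regime is $\gamma=\alpha$ and $h_1=0$, and splitting on $m_1$ produces (\ref{C-1}) (when $m_1\geq 1$ and $\delta\geq\beta$) and (\ref{C-10}) (when $m_1=0$ and $\delta=\beta$).

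The only step that requires some care, rather than being a mechanical book-keeping on the four index sets, is checking that (\ref{C+10}) (and symmetrically (\ref{C-10})) is not already a consequence of (\ref{C+1}) (respectively (\ref{C-1})). The point is that when $\gamma=\alpha$ and $m_1=0$ the $\Gamma$-terms on both sides of (\ref{C+}) disappear simultaneously, producing a genuinely tighter condition on the $\Pi$-rates alone that cannot be recovered by specialising (\ref{C+1}) to $\gamma=\alpha$. Once this observation is in place, the equivalence between (\ref{C+})--(\ref{C-}) and (\ref{C+1})--(\ref{C-10}) under $\overline{N}=1$ is complete.
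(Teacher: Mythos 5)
Your proof is correct and follows essentially the same route as the paper's: identify that only $k=l=1$ contributes, observe that the left side of (\ref{C+}) is non-zero only when $\beta=\delta$ and $j_1=0$, then split on $m_1\geq 1$ versus $m_1=0$ (with $\gamma=\alpha$) to recover (\ref{C+1}) and (\ref{C+10}), arguing symmetrically for (\ref{C-}). The only addition you make is the explicit remark that the $m_1=0,\ \gamma>\alpha$ subcase is implied by (\ref{C+1}) and that (\ref{C+10}) is a genuinely independent tightening, a point the paper leaves implicit; this is a welcome clarification, not a deviation.
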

\begin{proof}.
If $\beta<\delta$, then $\delta-\beta+j_i\geq \delta-\beta+j_1\geq 1$ for all $K>0$, $1\leq i\leq K$ so that $1\notin I_a$ by definition (\ref{Ia}). Since $\overline{N}=1$ the left hand side of $(\ref{C+})$ is null; if $\beta=\delta$ the only case for which the left hand side of $(\ref{C+})$ is not null is $j_{1}=0$, which gives
$$
\sum_{k > 0}\widetilde{\Pi}_{\alpha, \beta}^{0,k}+\sum_{k \in I_{a}}\widetilde{\Gamma}_{\alpha, \beta}^{k}\leq
\sum_{l>0}\Pi_{\gamma, \beta}^{0,l}+\sum_{l \in I_{b}}\Gamma_{\gamma, \beta}^{l}.
$$
Since $\overline{N}=1$, the value $K=1$ covers all possible sets $I_a$ and $I_b$, namely $I_{a}=\{k:m_{1}\geq k >0\}$ and $I_{b}=\{\gamma-\alpha+m_1\geq l>0\}$. If $m_{1}>0$, we get (\ref{C+1}). If $\gamma=\alpha$ and $m_1=0$ we get (\ref{C+10}). One can prove $(\ref{C-1})$ in a similar way.
\end{proof}
If $\beta=\delta$ and $\gamma\geq \alpha$, Formula (\ref{C+1}) expresses that the sum of the addition rates of the smaller process on $y$ in state $\beta$ must be smaller than the corresponding addition rates on $y$ of the larger process on $y$ in the same state. If $\beta=\delta$ and $\gamma=\alpha$ we also need that the birth rate of the smaller process on $y$ is smaller than the one of the larger process, that is (\ref{C+10}). Conditions (\ref{C-1})--(\ref{C-10}) have a symmetric meaning with respect to subtraction of particles from $x$. 
\begin{remark}
If $\widetilde{\mathcal{S}}=\mathcal{S}$, when $\alpha=\gamma$ and $\beta=\delta$ conditions (\ref{C+10}), (\ref{C-10}) are trivially satisfied, and we only have to check (\ref{C+1}) when $\alpha<\gamma$ and (\ref{C-1}) when $\beta<\delta$.
\label{attr_M1} 
\end{remark}
Proposition \ref{condM1} will be used in a companion paper for metapopulation models, see \cite{cf:Borrello}.
If $R^{0,k}_{\alpha,\beta}=0$ for all $\alpha$, $\beta$, $k$, the model is the reaction diffusion process studied by Chen (see $\cite{cf:Chen}$) and the attractiveness Conditions ($\ref{C+1}$), ($\ref{C-1}$) (the only ones by Remark \ref{attr_M1}) reduce to
\begin{align*}
\Gamma_{\alpha, \beta}^{1}\leq \Gamma_{\gamma, \beta}^{1}\quad \text { if } \gamma > \alpha; &&\Gamma_{\alpha, \delta}^{1}\geq  \Gamma_{\alpha, \beta}^{1}\quad \text { if } \delta > \beta.
\end{align*}
In other words we need $\Gamma^{1}_{\alpha,\beta}$ to be non decreasing with respect to $\alpha$ for each fixed $\beta$, and non increasing with respect to $\beta$ for each fixed $\alpha$. In \cite{cf:Chen}, the author introduces several couplings in order to find ergodicity conditions of reaction diffusion processes. All these couplings are identical to the coupling $\mathcal{H}$ introduced in Section \ref{coupling} (and detailed in Appendix \ref{app} if $\overline{N}=1$), on configurations where an addition or a subtraction of particles may break the partial order, but differ from $\mathcal{H}$ on configurations where it cannot happen.

\subsubsection{Multitype contact processes}
\label{sec:stover}

If $\Gamma^{k}_{\alpha,\beta}=\widetilde{\Gamma}^{k}_{\alpha,\beta}=0$, for all $(\alpha,\beta) \in X^{2}, k \geq 0$, that is when no jumps of particles are present, all rates are contact-type interactions. Such a process is called multitype contact process. Conditions (\ref{C+})--(\ref{C-}) reduce to: for all $(\alpha,\beta) \in X^{2}$, $(\gamma,\delta) \in X^{2}$, $(\alpha,\beta)\leq(\gamma,\delta)$, $h_{1} \geq 0$, $j_{1}\geq 0$,  
\begin{align}
i) \sum_{k> \delta-\beta+j_{1}}\widetilde{\Pi}_{\alpha, \beta}^{0,k}\leq  \sum_{l>j_{1}}\Pi_{\gamma, \delta}^{0,l};
\qquad
ii)\sum_{k>h_{1}}\widetilde{\Pi}_{\alpha, \beta}^{-k,0}\geq \sum_{l > \gamma-\alpha+h_{1}}\widetilde{\Pi}_{\gamma, \delta}^{-l,0}.
\label{temp+reac}
\end{align}
Many different multitype contact processes have been used to study biological models. We propose some examples with the corresponding conditions. Since the state space $\Omega=\{0,1,\ldots,M\}^{\mathbb{Z}^d}$ (where $M<\infty$) is compact, we refer to the construction in \cite{cf:Liggett}.\\

\noindent \textbf{Spread of tubercolosis model (\cite{cf:schi_clusters})}. Here $M$ represents the number of individuals in a population at a site $x \in \mathbb{Z}^d$. The transitions are:
\begin{align*}
&P^{1}_{\beta}=\phi\beta\I_{\{0\leq \beta\leq M-1\}},& R^{0,1}_{\alpha,\beta}=2d \lambda \alpha\I_{\{\beta=0\}},\\
&P^{-\beta}_{\beta}=\I_{\{1\leq \beta\leq M\}}, &p(x,y)=\frac{1}{2d}\I_{\{x\sim y\}}.
\end{align*}
where $y \sim x$ is one of the $2d$ nearest neighbours of site $x$.\\ 
Given two systems with parameters $(\lambda, \phi, M)$ and $(\overline{\lambda}, \overline{\phi}, \overline{M})$, the proof of \cite[Proposition 1]{cf:schi_clusters} reduces to check Conditions (\ref{temp+reac}): 
\begin{align*}
\phi\beta\I_{\{0\leq \beta\leq M-1\}}+2d \lambda \alpha \I_{\{\beta=0\}}\leq & \overline{\phi}\delta\I_{\{0\leq \delta\leq \overline{M}-1\}}+2d \overline{\lambda} \gamma\I_{\{\delta=0\}},& \text{if } \beta=\delta,j_1=0
\\
\I_{\{1\leq \alpha\leq M, \alpha>h_1\}}\geq &\I_{\{1\leq \gamma\leq \overline{M}, \gamma>\gamma-\alpha+h_1\}}, &\text{if }\gamma\geq \alpha,h_1\geq 0
\end{align*}
which are satisfied if $\lambda\leq \overline{\lambda}$, $\phi\leq \overline{\phi}$ and $M\leq \overline{M}$.\\

In the following examples we suppose $\widetilde{\mathcal{S}}=\mathcal{S}$, that is we consider necessary and sufficient conditions for attractiveness.\\

\noindent \textbf{$2$-type contact process (\cite{cf:neuhauser_multi})}. In this model $M=2$. Since a value on a given site does not represent the number of particles on that site, we write the state space $\{A,B,C\}^{\mathbb{Z}^d}$. The value $B$ represents the presence of a type-$B$ species, $C$ the presence of a type-$C$ species and $A$ an empty site. If $A=0$, $B=1$, $C=2$ then the transitions are 
\begin{align*}
&R^{0,1}_{\alpha,\beta}=2d \lambda_1 \I_{\{\alpha=1,\beta=0\}},&R^{0,2}_{\alpha,\beta}=2d \lambda_2 \I_{\{\alpha=2,\beta=0\}},\\
&P^{-\beta}_{\beta}=\I_{\{1\leq \beta\leq 2\}},&p(x,y)=\frac{1}{2d}\I_{\{x\sim y\}}.
\end{align*}
By taking $h_1=0$, Condition (\ref{temp+reac}) is
\begin{align*}
\sum_{k>\delta-\beta}\big(\I_{\{k=1\}}2d \lambda_1 \I_{\{\alpha=1,\beta=0\}}+&\I_{\{k=2\}}2d \lambda_2 \I_{\{\alpha=2,\beta=0\}}\big)\\
&\leq 2d \lambda_1 \I_{\{\gamma=1,\delta=0\}}+2d \lambda_2 \I_{\{\gamma=2,\delta=0\}};
\end{align*}
By taking $\beta=0$, $\delta=1$, $\alpha=\gamma=2$ we get $2d \lambda_2 \leq 0$, which is not satisfied since $\lambda_2>0$. As already observed, see \cite[Section 5.1]{cf:Stover}, one can get an attractive process by changing the order between species: namely by taking $A=1$, $B=0$ and $C=2$ the process is attractive.\\

\subsubsection{Conservative dynamics}
\label{sec:gobronsaada}

If $\Pi^{0,k}_{\alpha,\beta}=0$, for all $(\alpha,\beta) \in X^{2}, k \in \mathbb{N}$, we get a particular case of the model introduced in $\cite{cf:GobronSaada}$, for which neither particles births nor deaths are allowed, and the particle system is conservative. Suppose that in this model the rate $\Gamma^{k}_{\alpha,\beta}(y-x)$ has the form $\Gamma^{k}_{\alpha,\beta}p(y-x)$ for each $k$, $\alpha$, $\beta$.
Necessary and sufficient conditions for attractiveness are given by \cite[Theorem $2.21$]{cf:GobronSaada}:
\begin{align}
\displaystyle \sum_{k > \delta-\beta+j}\Gamma_{\alpha, \beta}^{k}p(y-x)\leq &
\sum_{l>j}\Gamma_{\gamma, \delta}^{l}p(y-x) &\text{ for each } j\geq 0
\label{SaadaC+}\\
\displaystyle \sum_{k>h}\Gamma_{\alpha, \beta}^{k}p(y-x)\geq & \sum_{l > \gamma-\alpha+h}\Gamma_{\gamma, \delta}^{l}p(y-x) &\text{ for each } h \geq 0
\label{SaadaC-}
\end{align}
while $(\ref{C+})-(\ref{C-})$ become
\begin{align}
\sum_{k \in I_{a}}\Gamma_{\alpha, \beta}^{k}p(y-x)\leq &
\sum_{l \in I_{b}}\Gamma_{\gamma, \delta}^{l}p(y-x)
\label{C+conf1}\\
\sum_{k \in I_{d}}\Gamma_{\alpha, \beta}^{k}p(y-x)\geq &
\sum_{l\in I_{c}}\Gamma_{\gamma, \delta}^{l}p(y-x)
\label{C-conf1}
\end{align}
for all $I_a$, $I_b$, $I_c$ and $I_d$ given by Theorem \ref{cns}.
\begin{proposizione}
Conditions ($\ref{SaadaC+}$)--($\ref{SaadaC-}$) and Conditions ($\ref{C+conf1}$)--($\ref{C-conf1}$) are equivalent.
\label{equivprops}
\end{proposizione}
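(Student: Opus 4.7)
The plan is to prove the two implications separately, treating $p(y-x)$ as a common positive factor throughout (both sides vanish when it is zero, so the inequalities become trivial). The direction $(\ref{C+conf1})$--$(\ref{C-conf1}) \Rightarrow (\ref{SaadaC+})$--$(\ref{SaadaC-})$ is by specialization: set $K=1$ and choose $m_1$ large enough that $m_1 \geq N(\alpha,\beta)$ and $\gamma-\alpha+m_1 \geq N(\gamma,\delta)$. Then by Hypothesis \ref{hip} every term in $(\ref{C+conf1})$ with $k>m_1$ on the left or $l>\gamma-\alpha+m_1$ on the right vanishes, and what remains is exactly (\ref{SaadaC+}) with $j=j_1$. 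An analogous choice in (\ref{C-conf1}) produces (\ref{SaadaC-}).

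For the converse I would introduce the non-increasing tail functions
\[
F(c) := \sum_{k>c}\Gamma^{k}_{\alpha,\beta}, \qquad G(c) := \sum_{l>c}\Gamma^{l}_{\gamma,\delta},
\]
so that $(\ref{SaadaC+})$--$(\ref{SaadaC-})$ read $F(\delta-\beta+j)\leq G(j)$ and $F(h)\geq G(\gamma-\alpha+h)$ for all $j,h\geq 0$. Because $j_i \leq j_i+\delta-\beta$ and $m_i \leq \gamma-\alpha+m_i$, the $i$-th interval $(j_i+\delta-\beta,\,m_i]$ of $I_a$ is contained in the $i$-th interval $(j_i,\,\gamma-\alpha+m_i]$ of $I_b$. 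I would first decompose $I_b$ into its maximal disjoint intervals $(C_r,D_r]_{r=1}^{q}$; since $\mathbf j$ and $\mathbf m$ are non-decreasing, each such piece corresponds to a consecutive block of indices $\{i_1,\dots,i_n\}$ with $C_r = j_{i_1}$ and $D_r = \gamma-\alpha+m_{i_n}$, and the corresponding $I_a$-pieces lie inside $(C_r,D_r]$. The target inequality then reduces to proving, for each $r$,
\[
\sum_{k\in I_a\cap (C_r,D_r]}\Gamma^{k}_{\alpha,\beta}\ \leq\ G(C_r)-G(D_r).
\]

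Fix such an $r$ and let $(A_1,B_1],\dots,(A_p,B_p]$ with $A_1<B_1<\cdots<A_p<B_p$ be the maximal disjoint intervals making up $I_a\cap (C_r,D_r]$. The left-hand side equals
\[
\sum_{t=1}^{p}\bigl(F(A_t)-F(B_t)\bigr) \;=\; F(A_1)-F(B_p) \;+\; \sum_{t=2}^{p}\bigl(F(A_t)-F(B_{t-1})\bigr),
\]
and the inner sum is non-positive because $F$ is non-increasing and $A_t>B_{t-1}$. It remains to bound $F(A_1)-F(B_p)$ by $G(C_r)-G(D_r)$. Since every index in the block is $\geq i_1$, one has $A_1\geq j_{i_1}+\delta-\beta = C_r+\delta-\beta$, so (\ref{SaadaC+}) together with monotonicity of $F$ gives $F(A_1)\leq G(C_r)$; similarly $B_p\leq m_{i_n} = D_r-(\gamma-\alpha)$, so (\ref{SaadaC-}) together with monotonicity of $G$ gives $F(B_p)\geq G(D_r)$. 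Subtracting yields the desired bound, and summing over $r$ produces (\ref{C+conf1}). The proof of (\ref{C-conf1}) is entirely symmetric, using $I_c, I_d$ and swapping the roles of the two tail hypotheses.

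The main obstacle is purely combinatorial: organising the union structure so that both $I_b$ (into its maximal connected pieces) and $I_a$ (relative to each such piece) are decomposed consistently, so that the telescoping sum lines up with the single ``outer'' pair $(C_r,D_r)$. Once this bookkeeping is in place, the inequalities on the extreme endpoints come from the simple GobronSaada hypotheses and the inner terms are handled for free by monotonicity of $F$ and $G$.
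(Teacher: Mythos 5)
Your proof is correct, and for the harder direction it is in fact more careful than the paper's own. The forward implication ($(\ref{C+conf1})$--$(\ref{C-conf1})\Rightarrow(\ref{SaadaC+})$--$(\ref{SaadaC-})$) is the same specialization as in the paper, differing only cosmetically (the paper keeps general $K$ but takes $j_i\equiv j$, $m_i\equiv m$ large so all intervals coincide; you take $K=1$; same thing). For the converse, both you and the paper start from the same building block: subtracting $(\ref{SaadaC-})$ at $h=m_i$ from $(\ref{SaadaC+})$ at $j=j_i$ to get the single-interval estimate $(\ref{modulo})$. The paper then just says to ``sum $(\ref{modulo})$ $K$ times with different values of $j_i,m_i$,'' which as literally stated does not give $(\ref{C+conf1})$: when the intervals $(j_i,\gamma-\alpha+m_i]$ overlap, summing the right-hand sides of $(\ref{modulo})$ over-counts relative to the union $I_b$, so the inequality does not follow by naive addition and some re-parametrization of $(\mathbf j,\mathbf m)$ is implicitly required. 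Your decomposition of $I_b$ into maximal disjoint pieces $(C_r,D_r]$ --- each a consecutive block of indices thanks to monotonicity of $\mathbf j$ and $\mathbf m$ --- together with the telescoping bound $\sum_t\bigl(F(A_t)-F(B_t)\bigr)\le F(A_1)-F(B_p)$ and the endpoint estimates $F(A_1)\le G(C_r)$, $F(B_p)\ge G(D_r)$, is precisely the bookkeeping that makes the paper's sketch rigorous. In effect you re-parametrize to $(j'_r,m'_r)=(j_{i_1(r)},m_{i_n(r)})$, which makes the $I_b$-pieces disjoint while the new $I_a$ contains the old one; the paper's phrase ``different values of $j_i,m_i$'' presumably points at this but never spells it out.
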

\begin{proof}.
By choosing $j_{i}=j$, $h_{i}=h$ and $m_{i}=m>N(\alpha,\beta)\vee N(\gamma,\delta)$ for each $i$ in Theorem \ref{cns}, Conditions (\ref{C+conf1})--(\ref{C-conf1}) imply (\ref{SaadaC+})--(\ref{SaadaC-}). For the opposite direction, by subtracting ($\ref{SaadaC-}$) with $h=m_i>\delta-\beta+j_i$ to ($\ref{SaadaC+}$) with $j=j_i$
\begin{align}
\sum_{m_i\geq k > \delta-\beta+j_i}\Gamma_{\alpha, \beta}^{k}p(y-x)\leq
\sum_{ \gamma-\alpha+m_i\geq l'>j_i}\Gamma_{\gamma, \delta}^{l'}p(y-x)
\label{modulo}
\end{align}
and we get  (\ref{C+conf1}) by summing $K$ times (\ref{modulo}) with different values of $j_i$, $m_i$, $1\leq i\leq K$. Condition (\ref{C-conf1}) follows in a similar way.
\end{proof}

\subsubsection{Metapopulation model with Allee effect and mass migration}
\label{meta}
The third model investigated in \cite{cf:Borrello} is a metapopulation dynamics model where migrations of many individuals per time are allowed to avoid the biological phenomenon of the Allee effect (see \cite{cf:allee1}, \cite{cf:allee2}). The state space is compact and on each site $x\in\mathbb{Z}^d$ there is a local population of at most $M$ individuals. Given $M\geq M_A>0$, $M>N>0$, $\phi$, $\phi_A$, $\lambda$ positive real numbers, the transitions are
\begin{align*}
&P^1_\beta=\beta\I_{\{\beta\leq M-1\}} \quad P^{-1}_\beta=\beta\big(\phi_{A}\I_{\{\beta \leq M_{A}\}}+\phi\I_{\{M_{A}<\beta\}}\big)\nonumber \\
&\Gamma^k_{\alpha,\beta}=\left\{\begin{array}{ll}
\lambda & \alpha-k\geq M-N \text{ and }\beta+k\leq M,\\
0 &\text{otherwise}.
\end{array}
\right.
\end{align*}
for each $\alpha,\beta \in X$, and $p(x,y)=\frac{1}{2d}\I_{\{y\sim x\}}$. In other words each individual reproduces with rate $1$, but dies with different rates: either $\phi_A$ if the local population size is smaller than $M_A$ (Allee effect) or $\phi$ if it is larger. When a local population has more than $M-N$ individuals a migration of more than one individual per time is allowed. Such a process is attractive by \cite[Proposition \Mon,  where $N$ and $M$ play opposite roles]{cf:Borrello}, which is an application of Theorem \ref{cns}.   

\subsubsection{Individual recovery epidemic model}
\label{IRM}

We apply Theorem \ref{cns} to get new ergodicity conditions for a model of spread of epidemics.\\
The most investigated interacting particle system that models the spread of epidemics is the contact process, introduced by Harris \cite{cf:Harris2}. It is a spin system $(\eta_{t})_{t \geq 0}$ on $\{0,1\}^{\mathbb{Z}^{d}}$ ruled by the transitions 
\begin{align*}
&\eta_{t}(x)=0\to  1 \text{ at rate }\sum_{y \sim x}\eta_{t}(y) \\
&\eta_{t}(x)=1\to  0 \text{ at rate }1 
\end{align*}
See \cite{cf:Liggett} and \cite{cf:Liggett2} for an exhaustive analysis of this model.\\
In order to understand the role of social clusters in the spread of epidemics, Schinazi \cite{cf:schi_clusters} introduced a generalization of the contact process. Then, Belhadji \cite{cf:Belhadji} investigated some generalizations of this model: on each site in $\mathbb{Z}^{d}$ there is a cluster of $M\leq \infty$ individuals, where each individual can be \emph{healthy} or \emph{infected}. A cluster is infected if there is at least one infected individual, otherwise it is healthy. The illness moves from an infected individual to a healthy one with rate $\phi$ if they are in the same cluster. The infection rate between different clusters is different: the epidemics moves from an infected individual in a cluster $y$ to an individual in a neighboring cluster $x$ with rate $\lambda$ if $x$ is healthy, and with rate $\beta$ if $x$ is infected.\\
We focus on one of those models, the \emph{individual recovery epidemic model} in a compact state space: each sick individual recovers after an exponential time and each cluster contains at most $M$ individuals. The non-null transition rates are
\begin{align}
R_{\eta(x),\eta(y)}^{0,k}=&\left\{
\begin{array}{ll}
2d\lambda\eta(x) & k=1 \text{ and }\eta(y)=0\\ 
2d\beta\eta(x) & k=1 \text{ and }1 \leq \eta(y) \leq M-1,
\end{array}
\right.\\
P^{k}_{\eta(y)}=&\left\{
\begin{array}{ll}
\gamma & k=1 \text{ and }\eta(y)=0\\
\phi\eta(y) & k=1 \text{ and }1 \leq \eta(y) \leq M-1\\
\eta(y) & k=-1 \text{ and }1 \leq \eta(y) \leq M,
\end{array}
\right.\\
p(x,y)=&\frac{1}{2d}\I_{\{y \sim x\}}\qquad \text{ for each }(x,y) \in S^{2}.
\end{align}
and $\Gamma_{\eta(x),\eta(y)}^{k}=0$ for all $k \in \mathbb{N}$. The rate $\gamma$ represents a positive ``pure birth'' of the illness: by setting $\gamma=0$, we get the epidemic model in \cite{cf:Belhadji}, where the author analyses the system with $M< \infty$ and $M=\infty$ and shows \cite[Theorem $14$]{cf:Belhadji} that different phase transitions occur with respect to $\lambda$ and $\phi$. Moreover (\cite[Theorem 15]{cf:Belhadji}), if 
\begin{equation}
\lambda\vee \beta<\frac{1-\phi}{2d}
\label{cond1}
\end{equation}
the disease dies out for each cluster size $M$.\\
By using the attractiveness of the model we improve this ergodicity condition. Notice that a dependence on the cluster size $M$ appears.
\begin{teorema}
Suppose 
\begin{equation}
\lambda \vee \beta < \frac{1-\phi}{2d(1-\phi^{M})},
\label{cond} 
\end{equation}
with $\phi<1$ and either $i)$ $\gamma=0$, or $ii)$ $\gamma>0$ and $\beta-\lambda\leq \gamma/(2d)$.\\
Then the system is ergodic.
\label{general_erg} 
\end{teorema}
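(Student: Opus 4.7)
The proof combines three ingredients: attractiveness of the process, existence of extremal invariant measures via compactness, and a $u$-criterion in the spirit of \cite[Chapter 13]{cf:Chen} applied to the semigroup starting from the extremal initial states.

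First, I verify attractiveness via Proposition \ref{condM1} with $\widetilde{\mathcal{S}}=\mathcal{S}$. By Remark \ref{attr_M1}, since $\Gamma^{k}_{\alpha,\beta}\equiv 0$ in this model, condition (\ref{C+1}) for $\alpha<\gamma$ and $\beta=\delta$ reduces to $R^{0,1}_{\alpha,\beta}\leq R^{0,1}_{\gamma,\beta}$, which is immediate since $R^{0,1}_{\alpha,\beta}$ is proportional to $\alpha$ with non-negative coefficient $2d\lambda\I_{\{\beta=0\}}+2d\beta\I_{\{1\leq\beta\leq M-1\}}$; condition (\ref{C-1}) for $\beta<\delta$ and $\alpha=\gamma$ reduces to $P^{-1}_{\alpha}\geq P^{-1}_{\alpha}$, trivially satisfied. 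Hence the process is attractive. Since $\Omega=\{0,\ldots,M\}^{\mathbb{Z}^{d}}$ is compact and the process is Feller, attractiveness yields extremal invariant measures $\nu_{\min}:=\lim_{t\to\infty}\delta_{\underline{0}}T(t)$ and $\nu_{\max}:=\lim_{t\to\infty}\delta_{\underline{M}}T(t)$, with $\nu_{\min}\leq\nu\leq\nu_{\max}$ for every invariant measure $\nu$; the process is ergodic if and only if $\nu_{\min}=\nu_{\max}$.

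In case $i)$ ($\gamma=0$), $\underline{0}$ is absorbing and $\nu_{\min}=\delta_{\underline{0}}$, so it suffices to show $\nu_{\max}=\delta_{\underline{0}}$. I choose $u(k):=1-\phi^{k}$, which is increasing, concave, satisfies $u(0)=0$, the recursion $u(k)=\phi u(k-1)+(1-\phi)$, and the useful bounds $(1-\phi)\phi^{M-1}k\leq u(k)\leq (1-\phi)k$ for $0\leq k\leq M$. Setting $V(t):=\E^{\underline{M}}[u(\eta_{t}(0))]$, translation invariance of the initial state and of the rates gives $V(t)=\E^{\underline{M}}[u(\eta_{t}(x))]$ for every $x\in\mathbb{Z}^{d}$, hence $\E^{\underline{M}}[\sum_{y\sim 0}u(\eta_{t}(y))]=2d\,V(t)$. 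A direct computation of $\mathcal{L}u(\eta(0))$ with the rates of the model—distinguishing the three cases $\eta(0)=0$, $1\leq\eta(0)\leq M-1$, $\eta(0)=M$, and combining the intrinsic contribution $-\eta(0)\phi^{\eta(0)-1}(1-\phi)^{2}(1+\phi)\I_{\{1\leq\eta(0)\leq M-1\}}-M\phi^{M-1}(1-\phi)\I_{\{\eta(0)=M\}}$ with the cross-birth contribution $\lambda(1-\phi)\sum_{y\sim 0}\eta(y)\I_{\{\eta(0)=0\}}+\beta\phi^{\eta(0)}(1-\phi)\sum_{y\sim 0}\eta(y)\I_{\{1\leq\eta(0)\leq M-1\}}$—produces, after taking expectations and converting $\sum_{y}\eta(y)$ into $\sum_{y}u(\eta(y))$ via the bounds above, a differential inequality $V'(t)\leq -C\,V(t)$ with $C>0$ precisely under (\ref{cond}). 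Hence $V(t)\to 0$, implying $\nu_{\max}(\eta(0)\geq 1)=0$ and $\nu_{\max}=\delta_{\underline{0}}$.

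In case $ii)$ ($\gamma>0$), no trivial invariant measure exists, so I run the same $u$-criterion on the Markovian increasing coupling $(\xi_{t},\zeta_{t})$ of Theorem \ref{cns} started from $(\underline{0},\underline{M})$, which preserves $\xi_{t}\leq\zeta_{t}$. The non-negative quantity $W(t):=\E[u(\zeta_{t}(0))-u(\xi_{t}(0))]$ controls the gap between $\nu_{\min}$ and $\nu_{\max}$, and an analogous bound $W'(t)\leq -C\,W(t)$ is established; the extra hypothesis $\beta-\lambda\leq \gamma/(2d)$ is exactly what absorbs the asymmetry between the birth rate at $\eta(0)=0$ (equal to $\gamma+2d\lambda\sum_{y}\eta(y)$) and at $1\leq\eta(0)\leq M-1$ (equal to $\phi\eta(0)+2d\beta\sum_{y}\eta(y)$) into the coupled differential inequality, so that no $\gamma$-dependent constant spoils the contraction. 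Hence $W(t)\to 0$ and $\nu_{\min}=\nu_{\max}$. The main obstacle is the sharp algebra in the computation of $\mathcal{L}u(\eta(0))$: the three regimes $\eta(0)=0,\,1\leq\eta(0)\leq M-1,\,\eta(0)=M$ must be combined so that the factor $1-\phi^{M}=(1-\phi)(1+\phi+\cdots+\phi^{M-1})$ emerges in the denominator of the contraction rate, producing the threshold $(1-\phi)/(2d(1-\phi^{M}))$ rather than the cruder $(1-\phi)/(2d)$ yielded by the naive choice $u(k)=k$, which only recovers (\ref{cond1}).
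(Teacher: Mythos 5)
Your high-level plan — verify attractiveness with Proposition \ref{condM1}, then run a $u$-criterion argument on the distance $\E[u(\eta^M_t(0))]$ (resp.\ on the coupled gap in case $ii)$) and close with Gronwall — is the same scheme as the paper's Proposition \ref{ucriterio} and its proof. The gap is in the choice of the weight sequence: you take $u(k)=1-\phi^k$, equivalently the increments $u_j=(1-\phi)\phi^j$ in the paper's notation $F(0,k)=\sum_{j<k}u_j$, and you assert that the resulting differential inequality $V'\le -CV$ holds ``precisely under (\ref{cond})''. That assertion is false. Plugging $u_j=(1-\phi)\phi^j$ into the $u$-criterion condition (\ref{condu}) (with $\bar u=u_0=1-\phi$) gives, for each $l$,
\begin{equation*}
-l(1-\phi)^2(1+\phi)\phi^{l-1}\;\le\;-\epsilon(1-\phi^l)-(1-\phi)(\lambda\vee\beta)2dl,
\end{equation*}
which requires $(1-\phi^2)\phi^{l-1}>(\lambda\vee\beta)2d$, most restrictively at $l=M$: $(1-\phi^2)\phi^{M-1}>(\lambda\vee\beta)2d$. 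This is strictly stronger than (\ref{cond}) unless $(1+\phi)\phi^{M-1}(1-\phi^M)\ge 1$, which generally fails; e.g.\ for $M=2$, $d=1$, $\phi=1/2$ your sequence only covers $\lambda\vee\beta<3/8$, while (\ref{cond}) promises ergodicity up to $\lambda\vee\beta<2/3$. The same defect surfaces in your ``useful bound'' $k\le u(k)/\bigl((1-\phi)\phi^{M-1}\bigr)$: it introduces a factor $\phi^{1-M}$ in front of the cross-birth term, and the available intrinsic contraction (at most $\approx 1$, e.g.\ at $l=M$ the ratio $-\mathcal{L}u/u$ is $M\phi^{M-1}/(1+\phi+\cdots+\phi^{M-1})\le 1$) cannot beat it without a much stronger hypothesis on $\lambda\vee\beta$.

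The paper avoids this by \emph{not} prescribing the sequence explicitly. Definition \ref{ul} defines $u_l(\epsilon)$ by the downwards recursion $u_l=\frac{1}{\phi l}\bigl(-\epsilon\sum_{j<l}u_j-U(\lambda\vee\beta)2dl+lu_{l-1}\bigr)$, which saturates (\ref{condu}) with equality. At $\epsilon=0$ this is the linear recursion $u_l=\frac{u_{l-1}-U(\lambda\vee\beta)2d}{\phi}$, whose solution is $u_l=a+(U-a)\phi^{-l}$ with $a=\frac{U(\lambda\vee\beta)2d}{1-\phi}$; when $U<a$, positivity $u_M>0$ is \emph{equivalent} to $(\lambda\vee\beta)<\frac{1-\phi}{2d(1-\phi^M)}$, which is how the factor $1-\phi^M$ actually enters. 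Proposition \ref{decreasing} and Lemma \ref{lemappl1} then show this positivity persists for a small positive $\epsilon$. Your sequence $(1-\phi)\phi^l$ decays rather than grows like $\phi^{-l}$, so it simply cannot produce the $1-\phi^M$ denominator. Replacing your explicit $u$ with the paper's recursively defined one (or with the $\epsilon=0$ solution above, plus an epsilon perturbation) is necessary to close the argument; the rest of your outline (translation invariance, Gronwall, basic coupling in case $ii)$ with the hypothesis on $\beta-\lambda$ absorbing the $\gamma$-asymmetry in Lemma \ref{lem_gamma}) is sound and matches the paper's route.
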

Notice that if $\gamma>0$ and $\beta\leq \lambda$ hypothesis $ii)$ is trivially satisfied.\\
If $M=1$ and $\gamma=0$ the process reduces to the contact process and the result is a well known (and already improved) ergodic result (see for instance \cite[Corollary $4.4$, Chapter VI]{cf:Liggett}); as a corollary we get the ergodicity result in the non compact case as $M$ goes to infinity.\\
In order to prove Theorem \ref{general_erg}, we use a technique called \emph{$u$-criterion}. It gives sufficient conditions on transition rates which yield ergodicity of an attractive translation invariant process. It has been used by several authors (see \cite{cf:Chen}, \cite{cf:Chenbook}, \cite{cf:neuhauser_erg}) for reaction-diffusion processes.\\

First of all we observe that 
\begin{proposizione}
The process is attractive for all $\lambda$, $\beta$, $\gamma$, $\phi$, $M$.
\end{proposizione}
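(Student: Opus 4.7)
The plan is to invoke Proposition~\ref{condM1} with $\widetilde{\mathcal{S}}=\mathcal{S}$. Every non-null transition in the individual recovery epidemic model modifies exactly one particle, so $\overline{N}=1$ and the proposition applies. Combined with Remark~\ref{attr_M1}, this reduces attractiveness to checking (\ref{C+1}) when $\alpha<\gamma$ and $\beta=\delta$, and (\ref{C-1}) when $\alpha=\gamma$ and $\beta<\delta$; Conditions (\ref{C+10}) and (\ref{C-10}) are automatic since $\widetilde{\mathcal{S}}=\mathcal{S}$.

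My next step is to write down the cumulative rates $\Pi^{0,1}_{\alpha,\beta}=R^{0,1}_{\alpha,\beta}+P^{1}_{\beta}$ and $\Pi^{-1,0}_{\alpha,\beta}=R^{-1,0}_{\alpha,\beta}+P^{-1}_{\alpha}$ from the definitions of the model. Temporarily renaming the infection and pure-birth parameters $\beta\mapsto b$ and $\gamma\mapsto c$ to avoid clashing with the configuration symbols of Theorem~\ref{cns}, and using that the model has no rate of the type $R^{-1,0}$, one finds
\[
\Pi^{0,1}_{\alpha,\beta}=(2d\lambda\alpha+c)\,\I_{\{\beta=0\}}+(2db\alpha+\phi\beta)\,\I_{\{1\leq\beta\leq M-1\}},\qquad \Pi^{-1,0}_{\alpha,\beta}=\alpha\,\I_{\{1\leq\alpha\leq M\}},
\]
while $\Gamma^{1}_{\alpha,\beta}\equiv 0$. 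The two structural features driving the proof are immediate from these formulas: for each fixed $\beta$, $\Pi^{0,1}_{\alpha,\beta}$ is a non-negative linear function of $\alpha$; and $\Pi^{-1,0}_{\alpha,\beta}$ does not depend on $\beta$ at all.

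Condition (\ref{C+1}) then reduces to a case split on $\beta\in\{0\}\cup\{1,\ldots,M-1\}\cup\{M\}$: in each subcase non-negativity of the coefficient of $\alpha$ gives $\Pi^{0,1}_{\alpha,\beta}\leq\Pi^{0,1}_{\gamma,\beta}$ whenever $\alpha<\gamma$. Condition (\ref{C-1}) is trivial because $\Pi^{-1,0}_{\alpha,\beta}=\Pi^{-1,0}_{\alpha,\delta}$ and $\Gamma^{1}\equiv 0$, so the required $\geq$ holds as an equality.

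I do not anticipate any real obstacle: the argument is a routine verification once Proposition~\ref{condM1} is in place. The only point worth emphasizing is that no restriction on $\lambda$, $\beta$, $\gamma$, $\phi$ or $M$ is needed, because births depend on the neighbouring-site occupation $\alpha$ only through a non-negative linear term while recovery is purely on-site---exactly the decoupled structure that Proposition~\ref{condM1} identifies as necessary and sufficient for single-particle attractiveness.
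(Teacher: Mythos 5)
Your proposal is correct and follows essentially the same route as the paper: reduce to Proposition~\ref{condM1} via $\overline{N}=1$ and Remark~\ref{attr_M1}, then verify (\ref{C+1}) by monotonicity in $\alpha$ of $\Pi^{0,1}_{\alpha,\beta}$ and note that (\ref{C-1}) holds trivially since $\Pi^{-1,0}_{\alpha,\beta}$ is independent of the second index. The renaming of the model parameters to avoid the clash with the configuration symbols of Theorem~\ref{cns} is a sensible presentational choice; the paper handles the same clash by phrasing the check directly in terms of $\xi(x),\xi(y),\eta(x),\eta(y)$.
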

\begin{proof}.
Since $M=1$, then $\overline{N}=1$ and Conditions ($\ref{C+}$)--($\ref{C-}$) reduce to ($\ref{C+1}$), ($\ref{C-1}$). Namely, given two configurations $\eta \in \Omega$, $\xi \in \Omega$ with $\xi\leq \eta$, necessary and sufficient conditions for attractiveness are 
$$
\text {if }  \xi(x) < \eta(x) \text{ and }\xi(y)=\eta(y),\quad\left\{
\begin{array}{l}
R^{0,1}_{\xi(x),\xi(y)}+P^{1}_{\xi(y)}\leq R^{0,1}_{\eta(x),\eta(y)}+P^{1}_{\eta(y)};\\
P^{-1}_{\xi(y)}\geq P^{-1}_{\eta(y)}.
\end{array}
\right.
$$
If $\xi(y)=\eta(y)\geq 1$, $2d\beta \xi(x)\leq 2d\beta \eta(x)$; if $\xi(y)=\eta(y)=0$, $2d\lambda \xi(x)\leq 2d\lambda \eta(x)$. In all cases the condition holds since $\xi\leq \eta$. 
\end{proof}
The key point for attractiveness is that $R^{0,1}_{\eta(x),\eta(y)}$ is increasing in $\eta(x)$.\\

Given $\epsilon >0$ and $\{u_{l}(\epsilon)\}_{l\in X}$ such that $u_{l}(\epsilon)>0$ for all $l\in X$, let $F_{\epsilon}:X \times X \to \mathbb{R}^+$ be defined by 
\begin{equation}
F_{\epsilon}(x,y)=\I_{\{x\neq y\}}\sum_{j=0}^{|y-x|-1}u_{j}(\epsilon) 
\label{F}
\end{equation} 
for all $x,y \in X$. When not necessary we omit the dependence on $\epsilon$ and we simply write $\displaystyle F(x,y)=\I_{\{x\neq y\}}\sum_{j=0}^{|y-x|-1}u_{j}$. Since $u_{l}(\epsilon)>0$, this is a metric on $X$ and it induces in a natural way a metric on $\Omega$. Namely, for each $\eta$ and $\xi$  in $\Omega$ we define
\begin{equation}
\rho_{\alpha}(\eta,\xi):=\sum_{x\in \mathbb{Z}^{d}}F(\eta(x),\xi(x))\alpha(x)
\label{alfa_intro}
\end{equation}
where $\{\alpha(x)\}_{x\in \mathbb{Z}^{d}}$ is a sequence such that $\alpha(x)\in \mathbb{R}$, $\alpha(x)>0$ for each $x\in\mathbb{Z}^{d}$ and
\begin{equation}
\sum_{x\in \mathbb{Z}^{d}}\alpha(x)<\infty.
\label{ro_intro}
\end{equation}
Denote by $\Omega^m:=\{\eta \in \Omega: \eta(x)=m \text{ for each } x \in \mathbb{Z}^d\}$ and let $\eta_0^M\in \Omega^M$ and $\eta_0^0 \in \Omega^0$ (which is not an absorbing state if $\gamma>0$). 
The key idea consists in taking a ``good sequence" $\{u_l\}_{l \in X}$ and in looking for conditions on the rates under which the expected value $\widetilde{\mathbb{E}}(\cdot)$ (with respect to a coupled measure $\widetilde{\mathbb{P}}$) of the distance between $\eta^{M}_{t}$ and $\eta^{0}_{t}$ converges to zero as $t$ goes to infinity, uniformly with respect to $x \in S$. We will use the generator properties and Gronwall's Lemma to prove that if there exists $\epsilon>0$ and a sequence $\{u_l(\epsilon)\}_{l \in X}$, $u_l(\epsilon)>0$ for all $l\in X$ such that  the metric $F$ satisfies
\begin{equation}
\widetilde{\mathbb{E}}\Big(\mathcal{L}F(\eta^{0}_{t}(x),\eta^{M}_{t}(x))\Big)\leq -\epsilon \widetilde{\mathbb{E}}\Big(F(\eta^{0}_{t}(x),\eta^{M}_{t}(x))\Big)
\label{tmp_Chen}
\end{equation}
for each $x \in \mathbb{Z}^d$, then 
\begin{equation}
\widetilde{\mathbb{E}}\Big(\lim_{t\to \infty}F(\eta^{0}_{t}(x),\eta^{M}_{t}(x))\Big)=0
\label{intr:limF}
\end{equation}
uniformly with respect to $x \in S$ so that the distance $\rho_{\alpha}(\cdot,\cdot)$ between the larger and the smaller process converges to zero, and ergodicity follows.\\  

Condition (\ref{tmp_Chen}) leads to
\begin{proposizione}[u-criterion]
If there exists $\epsilon>0$ and a sequence $\{u_{l}(\epsilon)\}_{l\in X}$ such that for any $l \in X$
\begin{equation}
\left\{
\begin{array}{l}
\displaystyle \phi l u_{l}(\epsilon)-lu_{l-1}(\epsilon) \leq -\epsilon \sum_{j=0}^{l-1}u_{j}(\epsilon)-\bar{u}(\epsilon)(\lambda \vee \beta)2dl\\ 
u_{l}(\epsilon)>0 
\end{array}
\right.
\label{condu}
\end{equation}
where $\displaystyle \bar{u}(\epsilon):=\max_{l\in X}u_{l}(\epsilon)$, $u_{-1}=0$ by convention  and $u_{0}=U>0$, then the system is ergodic.
\label{ucriterio}
\end{proposizione}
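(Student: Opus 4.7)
The plan is to establish a differential inequality for $g(t):=\widetilde{\mathbb{E}}(F(\eta^{0}_{t}(x),\eta^{M}_{t}(x)))$ in the basic attractive coupling $(\eta^{0}_{t},\eta^{M}_{t})$ starting from $\eta_0^0\in\Omega^0$ and $\eta_0^M\in\Omega^M$, and then apply Gronwall's lemma. Since attractiveness has been proven, $\eta^{0}_{t}\leq \eta^{M}_{t}$ almost surely, so at each site $x$ the integer $l(x,t):=\eta^{M}_{t}(x)-\eta^{0}_{t}(x)\in X$ is well defined and $F(\eta^{0}_{t}(x),\eta^{M}_{t}(x))=\sum_{j=0}^{l(x,t)-1}u_{j}(\epsilon)$; translation invariance of the rates and of the coupling makes $g(t)$ independent of $x$. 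Once $g'(t)\leq -\epsilon g(t)$ is proven, Gronwall gives $g(t)\leq g(0)e^{-\epsilon t}\to 0$, and Fubini combined with (\ref{ro_intro}) yields $\widetilde{\mathbb{E}}(\rho_{\alpha}(\eta^{0}_{t},\eta^{M}_{t}))=\bigl(\sum_{x}\alpha(x)\bigr)g(t)\to 0$; a sandwich argument using attractiveness applied to any initial configuration $\eta_{0}\in\Omega$ then promotes this to ergodicity.

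The heart of the proof is a pointwise bound on $\mathcal{L}F(\eta^{0}_{t}(x),\eta^{M}_{t}(x))$ in the coupling. Writing $l=l(x,t)$, three families of transitions act at site $x$: recoveries ($P^{-1}$), on-site infections ($P^{1}$), and neighbor infections ($R^{0,1}$). In the attractive coupling, only-smaller and simultaneous events cannot raise $F$ and therefore can only help; the relevant positive terms come from only-larger events. The only-larger recovery rate equals $l$ and contributes $-l\,u_{l-1}$. The only-larger on-site $P^{1}$ rate is bounded by $\phi l$ (in the interior $\eta^{0}_{t}(x)\geq 1$ it equals $\phi(b-a)=\phi l$, while at the boundary $\eta^{0}_{t}(x)=0$ one couples part of the smaller's pure birth rate $\gamma$ against the larger's $\phi b$, leaving an excess at most $\phi b=\phi l$), contributing at most $+\phi l\,u_{l}$. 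Finally, the only-larger neighbor-infection rate is bounded by $(\lambda\vee\beta)\sum_{y\sim x}(\eta^{M}_{t}(y)-\eta^{0}_{t}(y))$, each such event enlarging $F$ by $u_{l}\leq\bar u(\epsilon)$. The main obstacle is the boundary bookkeeping near $\eta^{M}_{t}(x)=M$ and $\eta^{0}_{t}(x)=0$, where the rate structure changes and one must verify that no spurious positive contribution to $\mathcal{L}F$ arises; the hypothesis $u_{l}(\epsilon)>0$ for every $l$ is used here to ensure that $F$ is genuinely a metric and that bounding $u_l$ by $\bar u(\epsilon)$ is legitimate.

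Summing the contributions and taking expectation, translation invariance of the coupled law gives $\widetilde{\mathbb{E}}(\eta^{M}_{t}(y)-\eta^{0}_{t}(y))=\widetilde{\mathbb{E}}(l(x,t))$ for every neighbor $y\sim x$, so
\[
g'(t)=\widetilde{\mathbb{E}}\bigl(\mathcal{L}F(\eta^{0}_{t}(x),\eta^{M}_{t}(x))\bigr)\leq \widetilde{\mathbb{E}}\bigl(\phi l\,u_{l}-l\,u_{l-1}+\bar u(\epsilon)(\lambda\vee\beta)\,2d\,l\bigr).
\]
Applying hypothesis (\ref{condu}) pointwise to the integrand bounds the right-hand side by $-\epsilon\,\widetilde{\mathbb{E}}\bigl(\sum_{j=0}^{l-1}u_{j}(\epsilon)\bigr)=-\epsilon\,g(t)$, and Gronwall yields the claimed exponential decay, from which ergodicity follows as in the first paragraph.
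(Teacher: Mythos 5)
Your high-level strategy (bound $\mathcal{L}F$ in a coupling, take expectations, use translation invariance to cancel neighbor terms, then Gronwall) matches the paper's, and it is correct in the case $\gamma=0$ (where $\underline{0}$ is absorbing so the lower process is frozen and the excess discussed below vanishes) and in the case $\gamma>0$, $\beta\leq\lambda$. But the key step ``the only-larger neighbor-infection rate is bounded by $(\lambda\vee\beta)\sum_{y\sim x}(\eta^M_t(y)-\eta^0_t(y))$'' is false precisely at the boundary you flag as the ``main obstacle,'' namely $\gamma>0$, $\beta>\lambda$, $\eta^0_t(x)=0$, $1\leq\eta^M_t(x)\leq M-1$. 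There the smaller process infects from neighbors at rate $\lambda\sum_{y\sim x}\eta^0_t(y)$ while the larger does so at rate $\beta\sum_{y\sim x}\eta^M_t(y)$, and since $\eta^0_t\leq\eta^M_t$ the only-larger rate equals
\[
\beta\sum_{y\sim x}\eta^M_t(y)-\lambda\sum_{y\sim x}\eta^0_t(y)
=\beta\sum_{y\sim x}\bigl(\eta^M_t(y)-\eta^0_t(y)\bigr)+(\beta-\lambda)\sum_{y\sim x}\eta^0_t(y),
\]
whose second summand can be as large as $(\beta-\lambda)2dM$ and is not controlled by $(\lambda\vee\beta)\sum_{y\sim x}(\eta^M_t(y)-\eta^0_t(y))$ (e.g., it is strictly positive even when the neighbor configurations agree). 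Attributing the fix to the hypothesis $u_l(\epsilon)>0$ is not right: that hypothesis only makes $F$ a metric; it does nothing for this excess.

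The paper's Lemma \ref{lem_gamma} handles exactly this case by \emph{not} throwing away the pure-birth term $\gamma$: when $\eta^0_t(x)=0$, the smaller's on-site birth rate is $\gamma$, producing an only-smaller contribution $-\gamma u_{l-1}$, and after trading $u_{l-1}$ for $u_l$ (which requires monotonicity of $u_l$) the total is $\ldots+((\beta-\lambda)\sum_{y\sim x}\eta^M_t(y)-\gamma)u_l$, which is nonpositive under the standing hypothesis $\beta-\lambda\leq\gamma/(2dM)$ of Theorem \ref{general_erg} (stated there as $\gamma/(2d)$). Your argument spends the $\gamma$ term early — bounding the on-site $P^1$ contribution by $\phi l\,u_l$ discards the compensating $-\gamma u_{l-1}$ — so the excess $(\beta-\lambda)\sum_{y\sim x}\eta^0_t(y)\,u_l$ has nothing left to absorb it. To repair the proof you must keep the $\gamma$ term, use the monotonicity of $u_l$ (which your hypotheses do not state but the paper's construction of $u$ guarantees), and invoke $\beta-\lambda\leq\gamma/(2dM)$, as in Lemma \ref{lem_gamma}.
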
 
Hence we are left with checking the existence of $\epsilon>0$ and positive $\{u_{l}(\epsilon)\}_{l\in X}$ which satisfy $(\ref{condu})$. Such a choice is not unique.
\begin{remark}
Given $U=1>0$, if  $u_{l}(\epsilon)=1$ for each $l \in X$ then Condition ($\ref{condu}$) reduces to the existence of $\epsilon>0$ such that $\phi l-l \leq -\epsilon l -(\lambda \vee \beta)2dl$ for all $l\in X$, that is $\epsilon \leq 1-\phi-(\lambda \vee \beta)2d$, thus Condition $(\ref{cond1})$. Notice that in this case 
$$
F(x,y)=\sum^{|y-x|-1}_{j=0}1=|y-x|.
$$
\end{remark}
Another possible choice is  
\begin{definition}
Given $\epsilon>0$ and $U>0$, we set $u_{0}(\epsilon)=U$ and we define $(u_{l}(\epsilon))_{l \in X}$ recursively through
\begin{equation}
u_{l}(\epsilon)=\frac{1}{\phi l}\Big(-\epsilon\sum_{j=0}^{l-1}u_{j}(\epsilon)-U(\lambda \vee \beta)2dl+lu_{l-1}(\epsilon)\Big) \qquad \text{for each } l\in X, l\neq 0.
\label{eq:ul}
\end{equation}
\label{ul}
\end{definition}
Definition \ref{ul} gives a better choice of $\{u_{l}(\epsilon)\}_{l\in X}$, indeed the $u$-criterion is satisfied under the more general assumption (\ref{cond}). Proofs of Theorems \ref{general_erg} and \ref{ucriterio} are detailed in Section \ref{proof_appl}.

\section{Coupling construction and proof of Theorem \ref{cns}}
\label{Proofs}
In this section we prove the main result: we begin with the necessary condition, based on \cite[Proposition 2.24]{cf:GobronSaada}.
\subsection{Necessary condition}
\begin{proposizione}
If the particle system $(\eta_{t})_{t \geq 0}$ is stochastically larger than $(\xi_{t})_{t \geq 0}$, then for all $(\alpha,\beta),(\gamma,\delta) \in X^{2} \times X^{2}$ with $(\alpha, \beta)\leq (\gamma,\delta)$, for all $(x,y) \in S^{2}$, Conditions $(\ref{C+})$--$(\ref{C-})$ hold.
\label{cn}
\end{proposizione}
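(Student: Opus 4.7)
The plan is to upgrade the stochastic-order hypothesis to an infinitesimal rate comparison, and then, for each choice of parameters, to exhibit an increasing subset $A$ of $\Omega$ and two ordered configurations $\xi\leq\zeta$ against which this comparison produces exactly the inequality (\ref{C+}) (or (\ref{C-})). The starting point is Theorem~\ref{Genlig1}(a): for every increasing $A$ and every $\xi\leq\zeta$ with $\I_A(\xi)=\I_A(\zeta)$, dividing $T(t)\I_A(\zeta)-\widetilde{T}(t)\I_A(\xi)\geq 0$ by $t$ and letting $t\to 0^+$ gives, via (\ref{defM}),
\[
\widetilde{\mathcal{L}}\I_A(\xi)\leq \mathcal{L}\I_A(\zeta).
\]
When both configurations lie in $A^c$, this bounds the rate of entering $A$; when both lie in $A$, it controls the exit rates in the reverse direction.

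To prove (\ref{C+}), I would fix $(x,y)\in S^2$ with $p(x,y)>0$, fix $(\alpha,\beta)\leq(\gamma,\delta)$ and parameters $K,\mathbf{j},\mathbf{m}$, and take $\xi,\zeta$ vanishing outside $\{x,y\}$ with $\xi(x)=\alpha,\xi(y)=\beta$, $\zeta(x)=\gamma,\zeta(y)=\delta$, together with the upward-closed set
\[
A=\bigcup_{i=1}^{K}\bigl\{\eta:\eta(y)>\delta+j_i\text{ and }\eta(x)\geq\alpha-m_i\bigr\}.
\]
A direct verification gives $\xi,\zeta\notin A$, and the transitions from $\xi$ that land in $A$ are exactly births on $y$ of $k>\delta-\beta+j_1$ particles (for pure births only $\eta(y)$ changes, so the constraint $\eta(x)\geq\alpha-m_i$ is automatic, and monotonicity of $\mathbf{j}$ collapses the union over $i$ to the smallest threshold $j_1$) and jumps $x\to y$ of $k\in I_a$ particles; the analogous statement for $\zeta$ produces $l>j_1$ and $l\in I_b$. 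For (\ref{C-}) I use the dual increasing set
\[
A'=\bigcap_{i=1}^{K}\bigl\{\eta:\eta(x)\geq\alpha-h_i\text{ or }\eta(y)>\delta+m_i\bigr\},
\]
which contains both $\xi$ and $\zeta$; the transitions leaving $A'$ are precisely deaths at $x$ of $k>h_1$ (resp.\ $l>\gamma-\alpha+h_1$) particles and jumps $x\to y$ of $k\in I_d$ (resp.\ $l\in I_c$) particles, and the infinitesimal inequality becomes the required ``$\geq$''-inequality on exit rates.

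The principal technical obstacle, as already encountered in \cite[Proposition 2.24]{cf:GobronSaada}, is disentangling the pair-$(x,y)$ rates that appear in (\ref{C+})--(\ref{C-}) from contaminating contributions of other pairs $(x',y)$ and $(x,y')$, weighted by $p(x',y)$ and $p(x,y')$. Because $\xi,\zeta$ agree (both equal $0$) off $\{x,y\}$ and $A,A'$ depend only on $(\eta(x),\eta(y))$, these spurious contributions involve only the simpler rates $\widetilde{\Pi}^{0,k}_{0,\beta},\ \widetilde{\Pi}^{0,k}_{\beta,\beta},\ \Pi^{0,l}_{0,\delta},\ \Pi^{0,l}_{\delta,\delta}$ and their analogues for subtractions. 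They can be captured by reapplying the infinitesimal criterion to an auxiliary ordered pair $(\xi_0,\zeta_0)$ obtained from $(\xi,\zeta)$ by resetting $\alpha,\gamma$ to $0$ (with a correspondingly simplified set $A_0$); a suitable subtraction cancels the off-pair terms, and dividing by $p(x,y)>0$ yields (\ref{C+}). The inequality (\ref{C-}) is handled by the symmetric subtraction using an auxiliary pair with $\beta,\delta$ reset. The monotonicity of the tuples $\mathbf{j},\mathbf{m},\mathbf{h}$ is precisely what makes the resulting book-keeping produce the threshold sums and union-of-intervals sets $I_a,\ldots,I_d$ on the left-hand sides of the claimed inequalities.
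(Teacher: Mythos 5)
Your setup is right at the start: the infinitesimal criterion $\widetilde{\mathcal{L}}\I_A(\xi)\leq\mathcal{L}\I_A(\zeta)$ for increasing $A$ with $\I_A(\xi)=\I_A(\zeta)$, the shape of the increasing set $A$ (union over $i$, with a threshold on $\eta(y)$ and a lower bound on $\eta(x)$), and the dual set $A'$ for ($\ref{C-}$) are all the right ingredients, and your accounting of which transitions from $\xi$ (resp.\ $\zeta$) enter $A$ or leave $A'$ correctly produces the thresholds $\delta-\beta+j_1$, $j_1$ and the sets $I_a,I_b,I_c,I_d$.

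The gap is in the last paragraph. Because your $\xi,\zeta$ vanish off $\{x,y\}$, the generator applied to $\I_A$ picks up, in addition to the $(x,y)$-pair terms you want, contributions from every site $z\neq x$ with $p(z,y)>0$: terms like $p(z,y)\sum_{k>\delta-\beta+j_1}\widetilde{\Pi}^{0,k}_{0,\beta}$ on the left and $p(z,y)\sum_{l>j_1}\Pi^{0,l}_{0,\delta}$ on the right (and, for ($\ref{C-}$), jump terms $\widetilde{\Gamma}^{k}_{\alpha,0}p(x,z)$ and $\Gamma^{l}_{\gamma,0}p(x,z)$ from $x$ to other sites). You propose to remove these by re-running the criterion with $\alpha,\gamma$ reset to $0$ and then ``subtracting.'' But this produces a second inequality pointing in the same direction, and subtracting a ``$\leq$''-inequality from another ``$\leq$''-inequality does not preserve ``$\leq$''; moreover the contaminating right-hand terms are built from the \emph{larger} state $\delta$, so there is no reason for the difference to have the needed sign. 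No auxiliary pair and subtraction will close this.

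The paper sidesteps the problem by never taking $\xi,\zeta$ to vanish off $\{x,y\}$. Fix only $y$; set $\xi(z)=\alpha_z$ and $\eta(z)=\gamma_z$ for \emph{every} $z\neq y$, build an increasing cylinder set $I_y(n)$ from thresholds at all sites in a large box $C^+_y(n)$, apply the infinitesimal criterion, and take the monotone limit $n\to\infty$. Then specialise to $\alpha_z\equiv\alpha$, $\gamma_z\equiv\gamma$ and identical thresholds $p^i_z\equiv p^i_\alpha$: every pair $(z,y)$ now contributes the \emph{same} rate expression, and because $p$ is bistochastic, $\sum_z p(z,y)=1$ collapses the sum into the single clean inequality ($\ref{C+}$) with nothing left over to cancel. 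Condition ($\ref{C-}$) is obtained symmetrically from the complement of a union of decreasing cylinders $D_x(n)$. If you replace your ``vanishing off $\{x,y\}$'' configurations and the cancellation-by-subtraction step with this ``constant-$\alpha$ / constant-$\gamma$ plus $\sum_z p(z,y)=1$'' device, the rest of your argument goes through.
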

\begin{proof}.
Let $(\xi,\eta) \in \Omega \times \Omega$ be two configurations such that $\xi \leq \eta$. Let $V \subset \Omega$ be an increasing cylinder set. If $\xi \in V$ or $\eta \notin V$,
\begin{equation}
\I_{V}(\xi)=\I_{V}(\eta).
\label{ug}
\end{equation}
Since $\eta_{t}$ is stochastically larger than $\xi_{t}$, for all $t \geq 0$, by Theorem $\ref{Genlig1}$ (or \cite[Theorem II.2.2]{cf:Liggett} if we are interested in attractiveness)
$$
(\widetilde{T}(t)\I_{V})(\xi)\leq (T(t)\I_{V})(\eta)
$$
since $\nu \leq \nu'$ is equivalent to $\nu(V) \leq \nu'(V)$ for all increasing sets. Combining this with $(\ref{ug})$,
$$
t^{-1}[(\widetilde{T}(t)\I_{V})(\xi)-\I_{V}(\xi)]\leq t^{-1}[(T(t)\I_{V})(\eta)-\I_{V}(\eta)],
$$
which gives, by Assumption ($\ref{defM}$),
\begin{equation}
(\widetilde{\mathcal{L}}\I_{V})(\xi)\leq (\mathcal{L}\I_{V})(\eta).
\label{disgen}
\end{equation} 
We have, by using ($\ref{gen}$),
\begin{align}
(\widetilde{\mathcal{L}}\I_{V})(\xi)=&\sum_{x,y \in S}p(x,y)\sum_{\alpha,\beta \in X}\ki (\xi)\sum_{k > 0}\Big(\widetilde{\Gamma}^{k}_{\alpha,\beta} (\I_{V}(S^{-k,k}_{x,y}\xi)-\I_{V}(\xi))
\nonumber\\
&+\widetilde{\Pi}^{0,k}_{\alpha,\beta}(\I_{V}(S^{0,k}_{x,y}\xi)-\I_{V}(\xi))+\widetilde{\Pi}^{-k,0}_{\alpha,\beta}(\I_{V}(S^{-k,0}_{x,y}\xi)-\I_{V}(\xi))\Big)\nonumber \\
=&-\I_{V}(\xi)\sum_{x,y \in S}p(x,y)\sum_{\alpha,\beta \in X}\ki (\xi)\sum_{k \geq 0}\Big(\widetilde{\Gamma}^{k}_{\alpha,\beta} \I_{\Omega \backslash V}(S^{-k,k}_{x,y}\xi)\nonumber\\
&+\widetilde{\Pi}^{0,k}_{\alpha,\beta}\I_{\Omega \backslash V}(S^{0,k}_{x,y}\xi)+\widetilde{\Pi}^{-k,0}_{\alpha,\beta}\I_{\Omega \backslash V}(S^{-k,0}_{x,y}\xi)\Big)\nonumber\\
&+\I_{\Omega \backslash V}(\xi)\sum_{x,y \in S}p(x,y)\sum_{\alpha,\beta \in X}\ki (\xi)\sum_{k \geq 0}\Big(\widetilde{\Gamma}^{k}_{\alpha,\beta} \I_{V}(S^{-k,k}_{x,y}\xi)\nonumber\\
&+\widetilde{\Pi}^{0,k}_{\alpha,\beta}\I_{V}(S^{0,k}_{x,y}\xi)+\widetilde{\Pi}^{-k,0}_{\alpha,\beta}\I_{V}(S^{-k,0}_{x,y}\xi)\Big).
\label{genset}
\end{align}
We write $(\mathcal{L}\I_{V})(\eta)$ by using the corresponding rates of $\mathcal{S}$.\\
We fix $y \in S$, $(\alpha_{z},\gamma_{z},\beta,\delta) \in X^{4}$ with $(\alpha_{z},\beta) \leq (\gamma_{z},\delta)$ for all $z \in S$, $z \neq y$, and two configurations $(\xi,\eta) \in \Omega \times \Omega$ such that $\xi(z)=\alpha_{z}$, $\eta(z)=\gamma_{z}$, for all $z \in S$, $z \neq y$, $\xi(y)=\beta$, $\eta(y)=\delta$. Thus $\xi \leq \eta$. We define the set $C^{+}_{y}$ of sites which interact with $y$ with an increase of the configuration on $y$,
\begin{equation}
\begin{split}
C^{+}_{y}=&\Big\{z \in S: p(z,y)>0 \text{ and }\sum_{k>0}\big(\widetilde{\Gamma}^{k}_{\alpha_{z},\beta}+\Gamma^{k}_{\gamma_{z},\delta}+\widetilde{\Pi}^{0,k}_{\alpha_{z},\beta}+\Pi^{0,k}_{\gamma_{z},\delta}\big)>0\Big\}.
\end{split}
\end{equation}
Denote by $x=(x_{1},\ldots,x_{d})$ the coordinates of each $x \in S$.  
We define, for each $n \in \mathbb{N}$, $C^{+}_{y}(n)=C^{+}_{y}\cap\{z \in S: \sum_{i=1}^{d}|z_{i}-y_{i}|\leq n\}$. We may suppose $C^{+}_{y}\neq \emptyset$, since otherwise $(\ref{C+}$)--($\ref{C-})$ would be trivially satisfied.
Given $K\in \mathbb{N}$, we fix $\{p^{i}_{z}\}_{i\leq K, z \in C^{+}_{y}}$ such that for each $i$, $z$, $p^{i}_{z}\in X$ and $p^{i}_{z}\leq \xi(z)$. Moreover we fix $\{p^{i}_{y}\}_{i\leq K}$ such that $p^{i}_{y}>\delta$, for each $i$.
For $n \in \mathbb{N}$, let 
$$
I_{y}(n)=\bigcup^{K}_{i=1}\big\{\zeta \in \Omega: \zeta(y)\geq p^{i}_{y} \text{ and }\zeta(z)\geq p^{i}_{z},\text{ for all } z \in C^{+}_{y}(n)\big\}.
$$
The union of increasing cylinder sets $I_{y}(n)$ is an increasing set, to which neither $\xi$ nor $\eta$ belong. We compute, using ($\ref{genset}$),
\begin{equation*}
\begin{split}
(\widetilde{\mathcal{L}}\I_{I_{y}(n)})(\xi)=&\sum_{z \in C^{+}_{y}(n)}p(z,y)\sum_{k>0}\big(\I_{\bigcup^{K}_{i=1}\{\alpha_{z}-k\geq p^{i}_{z},\beta+k \geq p^{i}_{y}\}}\widetilde{\Gamma}^{k}_{\alpha_{z},\beta}+\I_{\bigcup^{K}_{i=1}\{\beta+k \geq p^{i}_{y}\}}\widetilde{\Pi}^{0,k}_{\alpha_{z},\beta}\big)\\
&+\sum_{z \notin C^{+}_{y}(n)}p(z,y)\sum_{k>0}\I_{\bigcup^{K}_{i=1}\{\beta+k \geq p^{i}_{y}\}}\big(\widetilde{\Gamma}^{k}_{\alpha_{z},\beta}+\widetilde{\Pi}^{0,k}_{\alpha,\beta}\big)
\end{split}
\end{equation*}
\begin{equation*}
\begin{split}
(\mathcal{L}\I_{I_{y}(n)})(\eta)=&\sum_{z \in C^{+}_{y}(n)}p(z,y)\sum_{l>0}\big(\I_{\bigcup^{K}_{i=1}\{\gamma_{z}-l\geq p^{i}_{z},\delta+l \geq p^{i}_{y}\}}\Gamma^{l}_{\gamma_{z},\delta}+\I_{\bigcup^{K}_{i=1}\{\delta+l \geq p^{i}_{y}\}}\Pi^{0,l}_{\gamma_{z},\delta}\big)\\
&+\sum_{z \notin C^{+}_{y}(n)}p(z,y)\sum_{l>0}\I_{\bigcup^{K}_{i=1}\{\delta+l \geq p^{i}_{y}\}}\big(\Gamma^{l}_{\gamma_{z},\delta}+\Pi^{0,l}_{\gamma,\delta}\big).
\end{split}
\end{equation*}
So, by setting 
$$
J(p_{z},a,b):=J(\{p^{i}_{z}\}_{i\leq K},a,b)=\bigcup^{K}_{i=1}\{l:a-l\geq p^{i}_{z},b+l \geq p^{i}_{y}\},
$$ 
and by ($\ref{disgen}$), if $p^{1}_{y}:=\min_{i}\{p^{i}_{y}\}$, we get
\begin{equation*}
\begin{split}
\sum_{z \in C^{+}_{y}(n)}p(z,y)\big(\sum_{k \in J(p_{z},\alpha_{z},\beta)}\widetilde{\Gamma}^{k}_{\alpha_{z},\beta}+\sum_{k \geq p^{1}_{y}-\beta}\widetilde{\Pi}^{0,k}_{\alpha_{z},\beta}\big)+\sum_{z \notin C^{+}_{y}(n)}p(z,y)\sum_{k \geq p^{1}_{y}-\beta}\big(\widetilde{\Pi}^{0,k}_{\alpha_{z},\beta}+\widetilde{\Gamma}^{k}_{\alpha_{z},\beta}\big)\\
\leq \sum_{z \in C^{+}_{y}(n)}p(z,y)\big(\sum_{l \in J(p_{z},\gamma_{z},\delta)}\Gamma^{l}_{\gamma_{z},\delta}+\sum_{l \geq p^{1}_{y}-\delta}\Pi^{0,l}_{\gamma_{z},\delta}\big)+\sum_{z \notin C^{+}_{y}(n)}p(z,y)\sum_{l \geq p^{1}_{y}-\delta}\big(\Pi^{0,l}_{\gamma_{z},\delta}+\Gamma^{l}_{\gamma_{z},\delta}\big)
\end{split}
\end{equation*}
Taking the monotone limit $n \to \infty$ gives
\begin{equation*}
\begin{split}
\sum_{z \in S}\big(\sum_{k \in J(p_{z},\alpha_{z},\beta)}\widetilde{\Gamma}^{k}_{\alpha_{z},\beta}+\sum_{k \geq p^{1}_{y}-\beta}\widetilde{\Pi}^{0,k}_{\alpha_{z},\beta}\big)p(z,y) \leq \sum_{z \in S}\big(\sum_{l \in J(p_{z},\gamma_{z},\beta)}\Gamma^{l}_{\gamma_{z},\delta}+\sum_{l \geq p^{1}_{y}-\delta}\Pi^{0,l}_{\gamma_{z},\delta}\big)p(z,y)
\end{split}
\end{equation*}
By choosing the values $\alpha_{z}\equiv\alpha$, $\gamma_{z}\equiv \gamma$, $p^{i}_{z}\equiv p^{i}_{\alpha}$, since $\sum_z p(z,y)=1$, 
\begin{equation}
\sum_{k \in J(p_{\alpha},\alpha,\beta)}\widetilde{\Gamma}^{k}_{\alpha,\beta}+\sum_{k \geq p^{1}_{y}-\beta}\widetilde{\Pi}^{0,k}_{\alpha,\beta} \leq
\sum_{l \in J(p_{\alpha},\gamma,\delta)}\Gamma^{l}_{\gamma,\delta}+\sum_{l \geq p^{1}_{y}-\delta}\Pi^{0,l}_{\gamma,\delta}.
\label{cn1}
\end{equation}
A similar argument with 
\begin{equation*}
\begin{split}
C^{-}_{x}=&\{z \in S: p(z,y)>0 \text{ and } \sum_{k>0}\big(\widetilde{\Gamma}^{k}_{\alpha,\gamma(z)}+\Gamma^{k}_{\gamma,\delta_{z}}+\widetilde{\Pi}^{-k,0}_{\alpha,\beta_{z}}+\Pi^{-k,0}_{\gamma,\delta_{z}}\big)>0\}
\end{split}
\end{equation*}
subsets $C^{-}_{x}(n), n \in \mathbb{N}$, $\{p^{i}_{z}\}_{z \in C^{-}_{x}(n),i\leq K} \in X$, $\{p^{i}_{x}\}_{i\leq K} \in X$ with $p^{i}_{x}< \alpha$, for each $i$, $p^{i}_{z} \geq \delta_{z}$, for all $z \in C^{-}_{y}(n)$, $i\leq K$, decreasing cylinder sets
$$
D_{x}(n)=\bigcup_{i=1}^{K}\big\{\zeta \in \Omega: \zeta(x)\leq p^{i}_{x} \text{ and }\zeta(z)\leq p^{i}_{z},\text{ for all } z \in C^{-}_{y}(n)\big\}
$$
to the complement of which $\xi$ and $\eta$ belong, and the application of inequality ($\ref{disgen}$) to $\xi,\eta,\Omega \backslash D_{x}(n)$ (which is an increasing set since it is the complement of an increasing one) leads to
\begin{equation}
\sum_{k \in J^{-}(p_{\gamma},\alpha,\beta)}\widetilde{\Gamma}^{k}_{\alpha,\beta}+\sum_{\alpha-k \leq p^{1}_{x}}\widetilde{\Pi}^{-k,0}_{\alpha,\beta} \geq
\sum_{l \in J^{-}(p_{\gamma},\gamma,\delta)}\Gamma^{l}_{\gamma,\delta}+\sum_{\gamma-l \leq p^{1}_{x}}\Pi^{-l,0}_{\gamma,\delta}
\label{cn2}
\end{equation}
where $p^{1}_{x}=\max_{i}\{p^{i}_{x}\}$,
$$
J^{-}(p_{\gamma},a,b):=J^{-}(\{p^{i}_{\gamma}\}_{i\leq K},a,b)=\bigcup_{i=1}^{K}\{l:a-l \leq p^{i}_{x},b+l \leq p^{i}_{\gamma}\}.
$$ 
Finally, taking $p^{i}_{y}=\delta+j_{i}+1$, $p^{i}_{\alpha}=\alpha-m_{i}$ in ($\ref{cn1}$),
$p^{i}_{x}=\alpha-h_{i}-1$, $p^{i}_{\gamma}=\delta+m_{i}$ in ($\ref{cn2}$) gives $(\ref{C+}$)--($\ref{C-})$. 
\end{proof}

\subsection{Coupling construction}
\label{coupling}

The (harder) sufficient condition of Theorem $\ref{cns}$ is obtained by showing (in this subsection) the existence of a Markovian coupling, which appears to be increasing under Conditions $(\ref{C+})$--$(\ref{C-})$ (see Subsection \ref{sufcond}). Our method is inspired by \cite[Propositions 2.25, 2.39, 2.44]{cf:GobronSaada}, but it is much more intricate since we are dealing with jumps, births and deaths.\\
Let $\xi_{t}\sim \widetilde{\mathcal{S}}$ and $\eta_{t}\sim \mathcal{S}$ such that $\xi_t\leq \eta_t$. The first step consists in proving that instead of taking all possible sites, it is enough to consider an ordered pair of sites $(x,y)$ and to construct an increasing coupling concerning some of the rates depending on $\eta_t(x),\eta_t(y)$ and $\xi_t(x),\xi_t(y)$ (remember that we choose to take births on $y$, deaths on $x$ and jumps from $x$ to $y$) and a small part of the independent rates (by this, we mean deaths from $x$ with a rate depending only on $\eta_t(x)$ and $\xi_t(x)$ and births upon $y$ with a rate depending only on $\eta_t(y)$ and $\xi_t(y)$). We do not have to combine any ``dependent reaction'' $\widetilde{R}^{\cdot,\cdot}_{\xi_t(x),\xi_t(y)}$ or jumps rate $\widetilde{\Gamma}^{\cdot}_{\xi_t(x),\xi_t(y)}$ on $y$ with any rate $R^{\cdot,\cdot}_{\eta_t(z),\eta_t(y)}$ or $\Gamma^{\cdot}_{\eta_t(z),\eta_t(y)}$ if $z$ is different from $x$.
\begin{definition}
For fixed $(x,y)\in S^2$, for all $\eta \in \Omega$ and $k \in \mathbb{N}$ let
$$
h(P^{k}_{\eta(z)})=\left\{
\begin{array}{ll}
P^{k}_{\eta(y)}p(x,y) & z=y,\\ 
0 & \text{otherwise;}
\end{array}
\right. \qquad
h(P^{-k}_{\eta(z)})=\left\{
\begin{array}{ll}
P^{-k}_{\eta(x)}p(x,y) & z=x,\\ 
0 & \text{otherwise.}
\end{array}
\right.
$$
and 
$$
q(z,w)=\left\{
\begin{array}{ll}
p(x,y) & \text{if } z=x \text { and }w=y,\\
0& \text{otherwise.}
\end{array}
\right.
$$
An ordered pair of sites $(x,y)$ is an \emph{attractive pair} for $(\widetilde{\mathcal{S}},\mathcal{S})$ if there exists an increasing coupling for $(\xi_{t},\eta_{t})_{t\geq 0}$ where $\xi_{t}\sim \widetilde{\mathcal{S}}(\widetilde{\Gamma},\widetilde{R},h(\widetilde{P}),q)$, $\eta_{t}\sim \mathcal{S}(\Gamma,R,h(P),q)$. For notational convenience we call these new systems $\mathcal{S}_{(x,y)}$ and $\widetilde{\mathcal{S}}_{(x,y)}$.
\label{Sxy}
\end{definition}
Notice that $\mathcal{S}_{(x,y)}\neq \mathcal{S}_{(y,x)}$, because we take into account births on the second site, deaths on the first one, and only particles' jumps from the first site to the second one. The same remark holds for $\widetilde{\mathcal{S}}_{(x,y)}$. In other words in order to see if a pair is attractive, we reduce ourselves to a system with only part of the rates depending on the pair, and a part of the independent rates depending on $p(x,y)$ ($P^{k}_{\eta_t(y)}p(x,y)$ and $P^{-k}_{\eta_t(x)}p(x,y)$).
\begin{proposizione}
The process $\eta_{t}\sim \mathcal{S}$ is stochastically larger than $\xi_{t}\sim \widetilde{\mathcal{S}}$ if all its pairs are attractive pairs for $(\widetilde{\mathcal{S}},\mathcal{S})$.
\label{attpairs}
\end{proposizione}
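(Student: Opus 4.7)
The plan is to build the required increasing coupling of the full processes by summing ``pairwise'' increasing couplings coming from the hypothesis, using a natural decomposition of the generator along ordered pairs of sites.

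The first step is to observe that, thanks to the bistochasticity of $p$, the generator in $(\ref{gen})$ admits the decomposition $\mathcal{L}=\sum_{(x,y)\in S^2}\mathcal{L}_{(x,y)}$, where $\mathcal{L}_{(x,y)}$ is exactly the generator of $\mathcal{S}_{(x,y)}$ from Definition \ref{Sxy}. Indeed, the dependent rates $\Gamma^{k}_{\alpha,\beta}p(x,y)$, $R^{0,k}_{\alpha,\beta}p(x,y)$ and $R^{-k,0}_{\alpha,\beta}p(x,y)$ are already indexed by $(x,y)$, while the independent birth and death rates split correctly via $P^{k}_{\eta(y)}=\sum_{x\in S}P^{k}_{\eta(y)}p(x,y)$ and $P^{-k}_{\eta(x)}=\sum_{y\in S}P^{-k}_{\eta(x)}p(x,y)$. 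The same decomposition holds for $\widetilde{\mathcal{L}}$.

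The second step uses the assumption: for every pair $(x,y)$ there is a Markovian increasing coupling generator $\overline{\mathcal{L}}_{(x,y)}$ on $\Omega_0\times\Omega_0$ whose marginals are $\widetilde{\mathcal{L}}_{(x,y)}$ and $\mathcal{L}_{(x,y)}$ and which leaves the partial-order set $\{(\xi,\eta):\xi\leq\eta\}$ invariant. I would then define $\overline{\mathcal{L}}:=\sum_{(x,y)\in S^2}\overline{\mathcal{L}}_{(x,y)}$. Linearity immediately gives the correct marginals $\widetilde{\mathcal{L}}$ and $\mathcal{L}$, and the order-preserving property is inherited: each elementary transition of the coupled process is triggered by exactly one $\overline{\mathcal{L}}_{(x,y)}$ and, by construction, that single transition sends an ordered pair to an ordered pair, so $\xi_t\leq\eta_t$ is preserved for all $t\geq 0$. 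Stochastic order then follows from Theorem \ref{Genlig1}: for $f\in\mathcal{M}$ and $\xi_0\leq\eta_0$, the coupled process $(\xi_t,\eta_t)_{t\geq 0}$ yields $\widetilde{T}(t)f(\xi_0)=\mathbb{E}^{(\xi_0,\eta_0)}f(\xi_t)\leq \mathbb{E}^{(\xi_0,\eta_0)}f(\eta_t)=T(t)f(\eta_0)$.

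The main obstacle is technical rather than conceptual: one must verify that the formal sum $\overline{\mathcal{L}}=\sum_{(x,y)}\overline{\mathcal{L}}_{(x,y)}$ genuinely defines a Markov process on $\Omega_0\times\Omega_0$ satisfying the coupled analogue of $(\ref{defM})$. For any bounded local function depending on finitely many sites, only those pairs $(x,y)$ whose support meets the support of the function contribute; absolute convergence of the resulting sum follows from Hypothesis \ref{hip}, from the summability already imposed via $(\ref{defM})$ on each marginal, and from the bistochasticity of $p$, so the coupled process can be constructed by exactly the same procedure used for the marginals. Of course, the real work lies upstream, in Definition \ref{Sxy} and in actually producing the pairwise increasing coupling $\overline{\mathcal{L}}_{(x,y)}$ under Conditions $(\ref{C+})$--$(\ref{C-})$, which is postponed to the coupling construction of Section \ref{coupling}.
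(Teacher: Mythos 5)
Your proposal is correct and matches the paper's argument: both decompose the generator by ordered pairs of sites using the bistochasticity of $p$, and define the global coupling as the superposition of the pairwise increasing couplings, with marginals and order-preservation following directly. Your extra remarks about absolute convergence and well-definedness of the summed coupling generator are a reasonable elaboration of what the paper leaves implicit.
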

\begin{proof}.
If for each pair $(x,y)$ we are able to construct an increasing coupling for $(\widetilde{\mathcal{S}}_{(x,y)},\mathcal{S}_{(x,y)})$, we define an increasing coupling for $(\widetilde{\mathcal{S}},\mathcal{S})$ by superposition of all these couplings for pairs. Indeed: it is a coupling since by Definition \ref{Sxy} the sum of all marginals gives the original rates; it is increasing since each coupling for a pair is increasing. 
\end{proof}
From now on, we work on a fixed pair of sites $(x,y)\in S^{2}$ with $p(x,y)>0$ and on two ordered configurations $(\eta_{t},\xi_{t}) \in \Omega^{2}$ for a fixed $t \geq 0$, $\eta_{t}\sim \mathcal{S}$, $\xi_{t}\sim \widetilde{\mathcal{S}}$, $\xi_{t} \leq \eta_{t}$ and
\begin{equation}
\xi_{t}(x)=\alpha,\quad \xi_{t}(y)=\beta,\quad\eta_{t}(x)=\gamma,\quad \eta_{t}(y)=\delta.
\label{config}
\end{equation}
We denote by $N:=N(\alpha,\beta)\vee N(\gamma,\delta)$ and by $p:=p(x,y)$.
\begin{remark}
With a slight abuse of notation, since rates on $(\alpha,\beta)$ involve $\widetilde{\mathcal{S}}$ and rates on $(\gamma,\delta)$ involve $\mathcal{S}$, we omit the superscript $\sim$ on the lower system rates: we denote by 
$$
\widetilde{\Pi}^{\cdot,\cdot}_{\alpha,\beta}=\Pi^{\cdot,\cdot}_{\alpha,\beta}; \qquad \widetilde{\Gamma}^{\cdot}_{\alpha,\beta}=\Gamma^{\cdot}_{\alpha,\beta};  \qquad \mathcal{S}:=(\widetilde{\mathcal{S}},\mathcal{S}).
$$
\label{abuse}
\end{remark}
The rest of this section is devoted to the construction of an increasing coupling for $(x,y)$ and $(\alpha,\beta)\leq (\gamma,\delta)$. 
\begin{definition}
There is a \emph{lower attractiveness problem} on $\beta$ if there exists $k$ such that $\beta+k >\delta$ and $\Pi^{0,k}_{\alpha,\beta}+\Gamma^{k}_{\alpha,\beta}>0$; $\beta$ is $k$-\emph{bad} and $k$ is a \emph{bad value} (with respect to $\beta$). There is a \emph{higher attractiveness problem} on $\gamma$ if there exists $l$ such that $\gamma-l <\alpha$ and $\Pi^{-l,0}_{\gamma,\delta}+\Gamma^{l}_{\gamma,\delta}>0$; $\gamma$ is $l$-\emph{bad} and $l$ is a \emph{bad value} (with respect to $\gamma$). Otherwise $\beta$ is $k$-good (resp. $\gamma$ is $l$-good). There is an \emph{attractiveness problem} on $(\alpha,\beta),(\gamma,\delta)$ if there exists at least one bad value.
\label{kbad}
\end{definition}
In other words we distinguish bad situations, where an addition of particles allows lower states to go over upper ones (or upper ones to go under lower ones) from good ones, where it cannot happen. Notice that Definition $\ref{kbad}$ involves addition of particles upon $\beta$ and subtraction of particles from $\gamma$. If we are interested in attractiveness problems coming from addition upon $\alpha$ and subtraction from $\delta$ we refer to $(\beta,\alpha),(\delta,\gamma)$.\\

\textit{We choose to define a coupling rate that moves both processes only if we are dealing with an attractiveness problem, otherwise we let the two processes evolve independently through uncoupled rates. Conditions $(\ref{C+})-(\ref{C-})$ do not involve configurations without an attractiveness problem, so a different construction for them does not change the result. Since $N$ is finite, we can construct the coupling by a downwards recursion on the number of particles involved in a transition}.\\

Our purpose now is to describe a coupling for $(\widetilde{\mathcal{S}}_{(x,y)},\mathcal{S}_{(x,y)})$, that we denote by $\mathcal{H}(x,y)$ (or simply $\mathcal{H}$), which will be increasing under Conditions $(\ref{C+})-(\ref{C-})$.\\

\emph{First of all we detail the construction on terms involving the larger number $N$ of particles and we prove that under Conditions (\ref{C+})--(\ref{C-}) none of these coupling terms breaks the partial order: this is the claim of Proposition \ref{imp}.}

\begin{remark}
By Hypothesis \ref{hip}, at least one of the terms $\Pi^{0,N}_{\alpha,\beta}$, $\Pi^{0,N}_{\gamma,\delta}$,  $\Pi^{-N,0}_{\alpha,\beta}$, $\Pi^{-N,0}_{\gamma,\delta}$, $\Gamma^{N}_{\alpha,\beta}$ and $\Gamma^{N}_{\gamma,\delta}$ is not null. We assume all these terms (and the smaller ones) positive. Otherwise the construction works in a similar way with some null terms.
\label{remN}
\end{remark} 
\begin{definition}
Let
\begin{equation}
\lL:=N-\delta+\beta; \qquad \lU:=N-\gamma+\alpha.
\label{lastLU}
\end{equation}
\end{definition}
If there is a lower attractiveness problem on $\beta$, then $\beta+N>\delta$ ($\lL>0$) and an addition of $N$ particles upon $\beta$ breaks the partial order. Such a problem comes both from birth ($\Pi^{0,N}_{\alpha,\beta}p$) and from jump ($\Gamma^{N}_{\alpha,\beta}p$) rates. If $K=1$, $j_1=N-\delta+\beta-1=\lL-1$ and $m_1=N$ then Condition (\ref{C+}) writes
\begin{equation}
\Pi_{\alpha, \beta}^{0,N}+\Gamma_{\alpha, \beta}^{N}\leq 
\sum_{l\geq \lL}(\Pi_{\gamma, \delta}^{0,l}+\Gamma_{\gamma, \delta}^{l}).
\label{condN}
\end{equation}
Notice that if $l\geq \lL$, then $\beta+N\leq \delta+l$, and additions of $N$ particles upon $\beta$ and of $l$ particles upon $\delta$ do not break the partial order on $y$. The construction consists in coupling the terms on the left hand side (which involve $N$ particles and break the partial order) to the ones on the right hand side (in such a way that the final configuration preserves the partial order on $y$ and on $x$) by following a basic coupling idea. We couple jumps on the lower configuration with jumps on the upper one and births on the lower configuration with births on the upper one. Only if this is not enough to solve the attractiveness problem, we mix births with jumps.\\   
If there is a higher attractiveness problem on $\gamma$, then  $\gamma-N<\alpha$ ($\lU>0$) and a subtraction of $N$ particles from $\gamma$ breaks the partial order. In this case the problem comes from $\Pi^{-N,0}_{\gamma,\delta}p$ and $\Gamma^{N}_{\gamma,\delta}p$; we use a symmetric construction starting from Condition (\ref{C-}) with $K=1$, $h_1=N-\gamma+\alpha-1=\lU-1$ and $m_1=N$:
\begin{equation}
\Pi_{\gamma,\delta}^{-N,0}+\Gamma_{\gamma,\delta}^{N}\leq 
\sum_{k\geq \lU}(\Pi_{\alpha, \beta}^{-k,0}+\Gamma_{\alpha, \beta}^{k}).
\label{condN-}
\end{equation}
We denote by $H^{k,k,\cdot,\cdot}_{\alpha,\beta,\gamma,\delta}$ (resp. $H^{\cdot,\cdot,l,l}_{\alpha,\beta,\gamma,\delta}$) the coupling terms which involve jumps of $k$ (resp. $l$) particles from $x$ to $y$ on the lower (resp. upper) configuration; $H^{0,k,\cdot,\cdot}_{\alpha,\beta,\gamma,\delta}$ (resp. $H^{\cdot,\cdot,0,l}_{\alpha,\beta,\gamma,\delta}$) are the coupling terms concerning births of $k$ (resp. $l$) particles on $y$ on the lower (resp. upper) configuration and $H^{-k,0,\cdot,\cdot}_{\alpha,\beta,\gamma,\delta}$ (resp. $H^{\cdot,\cdot,-l,0}_{\alpha,\beta,\gamma,\delta}$) are the symmetric ones for death rates.\\
For instance $H^{k,k,0,l}_{\alpha,\beta,\gamma,\delta}$ combines the jump of $k$ particles from $x$ to $y$ on the lower configuration and the birth of $l$ particles on $y$ on the upper one.\\

The coupling construction takes place in three main steps.\\

\noindent \textbf{Step 1)} Suppose both $\beta$ and $\gamma$ are $N-$bad; if this is not the case one of them is good and the construction works in an easier way.\\
We begin with jump rates. We couple the lower configuration $N-$jump rate $\Gamma^{N}_{\alpha,\beta}p$ with jumps on the upper configuration. We first couple it with $\Gamma^{N}_{\gamma,\delta}p$, because $\alpha \leq \gamma$, $\alpha-N\leq \gamma-N$ and the final pairs of values are $(\alpha-N,\beta+N)\leq(\gamma-N,\delta+N)$. Let
\begin{align}
H^{N,N,N,N}_{\alpha,\beta,\gamma,\delta}:=(\Gamma^{N}_{\alpha,\beta}\wedge \Gamma^{N}_{\gamma,\delta})p.
\label{firstdiffusion}
\end{align} 
Then if the lower attractiveness problem is not solved, that is $H^{N,N,N,N}_{\alpha,\beta,\gamma,\delta}=\Gamma^{N}_{\gamma,\delta}p$, we have a remainder of the lower configuration jump rate that we couple with the upper configuration jump rate with the largest change of particles left, $N-1$. We go on by coupling the new remainder of $\Gamma^{N}_{\alpha,\beta}p$, if positive, with $\Gamma^{l}_{\gamma,\delta}p$ at $l^{th}$ step. The final pairs of values we reach are $(\alpha-N,\beta+N),(\gamma-l,\delta+l)$, which always preserve the partial order on $x$ since $\alpha-N\leq \gamma-l$ when $l\leq N$. The partial order on $y$ is preserved only if $\beta+N \leq \delta+l$, that is if $l\geq \lL$. For this reason we stop the coupling between jumps at step $\lL$: this is the meaning of formula (\ref{condN}).\\
More precisely, when $\Gamma^{N}_{\alpha,\beta}>\Gamma^{N}_{\gamma,\delta}$ and $N-1\geq \lL$, we get the second coupling rate
$$
H^{N,N,N-1,N-1}_{\alpha,\beta,\gamma,\delta}:=(\Gamma^{N}_{\alpha,\beta}p-H^{N,N,N,N}_{\alpha,\beta,\gamma,\delta})\wedge \Gamma^{N-1}_{\gamma,\delta}p=(\Gamma^{N}_{\alpha,\beta}-\Gamma^{N}_{\gamma,\delta})p\wedge \Gamma^{N-1}_{\gamma,\delta}p.
$$
Then either $H^{N,N,N-1,N-1}_{\alpha,\beta,\gamma,\delta}=(\Gamma^{N}_{\alpha,\beta}-\Gamma^{N}_{\gamma,\delta})p$, and with the two steps $l=N$ and $l=N-1$ we have no remaining part of $\Gamma^{N}_{\alpha,\beta}p$, or $H^{N,N,N-1,N-1}_{\alpha,\beta,\gamma,\delta}=\Gamma^{N-1}_{\gamma,\delta}p$ and we are left with a remainder $(\Gamma^{N}_{\alpha,\beta}-\Gamma^{N}_{\gamma,\delta}-\Gamma^{N-1}_{\gamma,\delta})p$, to go on with $l=N-2, \ldots$
Therefore we can define recursively starting from (\ref{firstdiffusion})
\begin{align}
H^{N,N,l,l}_{\alpha,\beta,\gamma,\delta}:=&(\Gamma^{N}_{\alpha,\beta}p-\sum_{l'>l}H^{N,N,l',l'}_{\alpha,\beta,\gamma,\delta})\wedge \Gamma^{l}_{\gamma,\delta}p=:J^{N,l+1}_{\alpha,\beta}\wedge \Gamma^{l}_{\gamma,\delta}p
\label{gammaN0}
\end{align} 
where \emph{$J^{N,l}_{\alpha,\beta}$ is the remainder of the jump (hence  notation $J$) rate  $\Gamma^{N}_{\alpha,\beta}p$ left over after the $l^{th}$ step} of the coupling construction; (\ref{gammaN0}) means that we couple the remainder from the $(l+1)^{th}$ step, $J^{N,l+1}_{\alpha,\beta}$, with $\Gamma^{l}_{\gamma,\delta}p$ at the  $l^{th}$ step.\\
We proceed this way until either we have no remainder of $\Gamma^{N}_{\alpha,\beta}p$, or we have reached $\lL$ with the remainder
\begin{equation}
J^{N,\lL}_{\alpha,\beta}=\Gamma^{N}_{\alpha,\beta}p-\sum_{l\geq \lL}H^{N,N,l,l}_{\alpha,\beta,\gamma,\delta}=\Gamma_{\alpha,\beta}^{N}p-\sum_{l\geq \lL}\Gamma_{\gamma, \delta}^{l}p>0
\label{remainder}
\end{equation} 
Note that, since $\beta+N\leq \delta+l$ for $l\geq \lL$, none of the coupled transitions until this point have broken the partial ordering. Proceeding until $\lL-1$ would break the partial ordering. Therefore we stop at $\lL$ the construction in Step $1$ and we will couple the remainder $J^{N,\lL}_{\alpha,\beta}$ with upper birth rates at Step $3a$.\\

Step $1$ is detailed in Tables $\ref{tabNd1}$ and $\ref{tabNd2}$, where $N^{d+}$ corresponds to the first $l$ (going downwards from $N$) such that the minimum in (\ref{gammaN0}) is $J^{N,l+1}_{\alpha,\beta}$.
 
\begin{table}[htb]\caption{$N$ jump rate, $N^{d+}\geq \lL$ ($J^{N,\lL}_{\alpha,\beta}=0$)}\label{tabNd1}
\begin{center}
\begin{tabular}{|l|ccc|cc|}
\cline{1-6} 
&&&&&\\
$l$              &$J_{\alpha, \beta}^{N,l+1}$& $\wedge$ &$\Gamma_{\gamma, \delta}^{l}p$&=& $H^{N,N,l,l}_{\alpha,\beta,\gamma,\delta}$\\
\cline{1-6} 
&&&&&\\
$N$&             &                        &$\times$&& \\
\vdots           &                        &&\vdots&& $\Gamma^{l}_{\gamma,\delta}p$\\
$N^{d+}+1$       &                        &&$\times$&&\\
\cline{2-6}
&                &                        &&&\\
$N^{d+}$         &$\times$                &&& &$\displaystyle \Gamma^{N}_{\alpha,\beta}p-\sum_{l'>N^{d+}}\Gamma^{l'}_{\gamma,\delta}p\geq 0$ \\
\cline{2-6}
$N^{d+}-1$       &$\times$&&&&\\
\vdots           &\vdots&&&&$0$\\
$\lL$ &$\times$&&&&\\
\cline{1-6}
\end{tabular}
\end{center}
\end{table}
\begin{table}[htb]\caption{$N$ jump rate, $N^{d+}=\lL-1$ ($J^{N,\lL}_{\alpha,\beta}> 0$)}\label{tabNd2}
\begin{center}
\begin{tabular}{|l|ccc|cc|}
\cline{1-6}
&&&&&\\
$l$&$J_{\alpha, \beta}^{N,l+1}$& $\wedge$ &$\Gamma_{\gamma, \delta}^{l}p$&=& $H^{N,N,l,l}_{\alpha,\beta,\gamma,\delta}$\\
\cline{1-6} 
&&&&&\\
$N$ & &&$\times$&& \\
\vdots&&&\vdots &&$\Gamma^{l}_{\gamma,\delta}p$\\
$\lL$&&&$\times$&&\\
\cline{1-6}
\end{tabular}
\end{center}
\end{table}
We have to distinguish between two situations:\\
$\bullet$ if the minimum given by $H^{N,N,l,l}_{\alpha,\beta,\gamma,\delta}$ is always the second term, we do not reach $N^{d+}$ at $\lL$ yet. Since we have decided to stop the coupling construction at step $\lL$, we need to couple with birth rates (Step $3a$) in order to solve the attractiveness problem (Table $\ref{tabNd2}$ when $J^{N,\lL}_{\alpha,\beta}>0$) and we put $N^{d+}=\lL-1$.\\
$\bullet$ if $N^{d+}\geq \lL$ then $H^{N,N,N^{d+}-1,N^{d+}-1}_{\alpha,\beta,\gamma,\delta}=0$ by (\ref{gammaN0}). Therefore there is no need to continue a coupling involving $\Gamma^{N}_{\alpha,\beta}p$, since the attractiveness problem is solved. In this case $H^{N,N,l,l}_{\alpha,\beta,\gamma,\delta}=0$ for $\lL\leq l<N^{d+}$ by definition, we do not need Step $3a$ and we define
\begin{equation}
H^{N,N,0,l}_{\alpha,\beta,\gamma,\delta}:=0 \qquad \text{ for each } l>0.
\label{a27}
\end{equation}
In both cases, we define
\begin{equation}
 H^{N,N,l,l}_{\alpha,\beta,\gamma,\delta}:=0 \qquad \text{ for } 0<l<\lL.
\label{a69}
\end{equation}
since these terms could break the partial ordering. Moreover, since $\beta$ is $N-$bad, we define
\begin{equation}
H^{N,N,-l,0}_{\alpha,\beta,\gamma,\delta}:=0 \quad \text{ for each }l>0.
\label{HNN-l0}
\end{equation}
Notice that such terms are not a priori null. Indeed if $\gamma$ is $l-$bad and $\beta$ is $N-$good one can use this coupling term to solve the attractiveness problem of $\Pi^{-l,0}_{\gamma,\delta}p$.\\
 
If there is a higher attractiveness problem, we repeat the same construction for the coupling terms involving the jump rate $\Gamma^N_{\gamma,\delta}p$ starting from (\ref{firstdiffusion}) and we define a value $N^{d-}$ analogous to the previous $N^{d+}$. The recursive formula symmetric to (\ref{gammaN0}) if $\Gamma^{N}_{\gamma,\delta}>\Gamma^{N}_{\alpha,\beta}$ is 
\begin{align}
H^{k,k,N,N}_{\alpha,\beta,\gamma,\delta}:=&\Gamma^{k}_{\alpha,\beta}p\wedge (\Gamma^{N}_{\gamma,\delta}p-\sum_{k'>k}H^{k',k',N,N}_{\alpha,\beta,\gamma,\delta})=:\Gamma^{k}_{\alpha,\beta}p\wedge J^{N,k+1}_{\gamma,\delta}
\label{gammaNsim}
\end{align}
where \emph{$J^{k,N}_{\gamma,\delta}$ represents the remainder of the jump rate  $\Gamma^{N}_{\gamma,\delta}p$ left over after the $k^{th}$ step of the coupling construction}. We need to couple the remainder of $\Gamma^{N}_{\gamma,\delta}p$ with the lower death rates in Step $3a$ if
\begin{equation}
J^{\lU,N}_{\gamma,\delta}=\Gamma_{\gamma,\delta}^{N}p-\sum_{k\geq \lU}H^{k,k,N,N}_{\alpha, \beta,\gamma,\delta}=\Gamma_{\gamma,\delta}^{N}p-\sum_{k\geq \lU}\Gamma_{\alpha, \beta}^{k}p>0.
\label{remainder_sim}
\end{equation}
In this case we put $N^{d-}=\lU-1$.  
By symmetry we define $H^{k,k,N,N}_{\alpha,\beta,\gamma,\delta}=0$ for each $0<k< \lU$, $H^{-k,0,N,N}_{\alpha,\beta,\gamma,\delta}=0$ for each $k>0$ if $N^{d-}\geq \lU$ and $H^{0,k,N,N}_{\alpha,\beta,\gamma,\delta}=0$ for each $k>0$ if $\gamma$ is $N-$bad.
\begin{remark}
By (\ref{firstdiffusion}), either $N^{d+}=N$ or $N^{d-}=N$, that is either the lower or the higher attractiveness problem given by the jump of $N$ particles is solved by the first coupling term. 
\label{rem_Nd}
\end{remark}
If there is no lower (higher) attractiveness problem we put $N^{d+}=N+1$ ($N^{d-}=N+1$).\\
If either $\beta$ or $\gamma$ is $N-$good, we use only one of the previous constructions. Suppose for instance that $\gamma$ is $N-$good: then the construction involving $\Gamma^N_{\alpha,\beta}p$ works in the same way, but the symmetric one is not required and we use the coupling terms $H^{\cdot,\cdot,N,N}_{\alpha,\beta,\gamma,\delta}$ only to solve the lower attractiveness problems induced by either $\Gamma^{N}_{\alpha,\beta}p$ (at Step $1$) or by $\Pi^{0,N}_{\alpha,\beta}p$ (at Step $3b$). Therefore $H^{0,N,N,N}_{\alpha,\beta,\gamma,\delta}$ (defined at Step $3a$) might be non null, but we define
\begin{equation}
H^{-l,0,N,N}_{\alpha,\beta,\gamma,\delta}:=0 \text{ for each }l>0.
\label{good1}
\end{equation}
If $\beta$ is $N-$good a symmetric remark holds.\\

\noindent \textbf{Step 2)} Suppose $\beta$ is $N-$bad. The birth rate $\Pi^{0,N}_{\alpha,\beta}p$ could break the partial order on $\beta$. We work as in Step $1$ and we begin with the coupling term
\begin{align}
H^{0,N,0,N}_{\alpha,\beta,\gamma,\delta}:=&(\Pi^{0,N}_{\alpha,\beta}\wedge \Pi^{0,N}_{\gamma,\delta})p.
\label{firstbirths}
\end{align} 
If the attractiveness problem is not solved we couple the remainder of $\Pi^{0,N}_{\alpha,\beta}p$ with the birth rate of the upper configuration with the largest change of particles, $\Pi^{0,N-1}_{\gamma,\delta}p$ and going down we couple it with $\Pi^{0,l}_{\gamma,\delta}p$ at $l^{th}$-step, until $l=\lL$. We define recursively starting from (\ref{firstbirths}) the terms
\begin{align}
H^{0,N,0,l}_{\alpha,\beta,\gamma,\delta}:=&(\Pi^{0,N}_{\alpha,\beta}p-\sum_{l'>l}H^{0,N,0,l'}_{\alpha,\beta,\gamma,\delta})\wedge \Pi^{0,l}_{\gamma,\delta}p=: B^{N,l+1}_{\alpha,\beta}\wedge \Pi^{0,l}_{\gamma,\delta}p
\label{PiN}
\end{align}
where \emph{$B^{N,l}_{\alpha,\beta}$ is the remainder of the birth (hence notation $B$) rate  $\Pi^{0,N}_{\alpha,\beta}p$ left over after the $l^{th}$ step of the coupling construction}.\\
We proceed as in Step $1$ for transitions involving births: while the minimum in (\ref{PiN}) is the second term we go on downwards in $l$. As soon as the minimum is the first term, we have no remainder of $\Pi^{0,N}_{\alpha,\beta}p$, so the lower attractiveness problem is solved and we define $N^{B}$ to be this first such $l$. 
Otherwise we have reached $\lL$ with the remainder
\begin{equation}
B^{N,\lL}_{\alpha,\beta}=\Pi^{0,N}_{\alpha,\beta}p-\sum_{l\geq \lL}H^{0,N,0,l}_{\alpha,\beta,\gamma,\delta}=\Pi_{\alpha,\beta}^{0,N}p-\sum_{l\geq \lL}\Pi_{\gamma, \delta}^{0,l}p>0
\label{remainder2}
\end{equation} 
We have to distinguish between two situations:\\
$\bullet$ if the minimum given by $H^{0,N,0,l}_{\alpha,\beta,\gamma,\delta}$ is always the second term, we do not reach $N^{B}$ at $\lL$ yet. Since we stop the coupling construction at step $\lL$, we need to couple the remainder with jump rates (Step $3b$) in order to solve the attractiveness problem (Table \ref{tabNB2} when $B^{N,\lL}_{\alpha,\beta}>0$) and we put $N^{B}=\lL-1$;\\
$\bullet$ if $N^{B}\geq \lL$ then $B^{N,\lL}_{\alpha,\beta}=0$ (see Table \ref{tabNB1}). Therefore there is no need to continue a coupling involving $\Pi^{0,N}_{\alpha,\beta}p$, since the attractiveness problem is solved. In this case $H^{0,N,0,l}_{\alpha,\beta,\gamma,\delta}=0$ for $\lL\leq l<N^{B}$ by definition, we do not need Step $3b$ and we define
\begin{equation}
H^{0,N,l,l}_{\alpha,\beta,\gamma,\delta}:=0 \qquad \text{ for each } l>0,
\label{a68bis}
\end{equation}
In both cases, we define
\begin{equation}
 H^{0,N,0,l}_{\alpha,\beta,\gamma,\delta}:=0 \qquad \text{ for } 0<l<\lL.
\label{a69bis}
\end{equation}

\begin{table}[htb]\caption{$N$ birth rate, $N^{B}\geq \lL$ ($B^{N,\lL}_{\alpha,\beta}=0$)}\label{tabNB1}
\begin{center}
\begin{tabular}{|l|ccc|cc|}
\cline{1-6}
&&&&&\\
$l$&$B^{N,l+1}_{\alpha,\beta}$& $\wedge$ &$\Pi^{0,l}_{\gamma,\delta}p$&=&$H^{0,N,0,l}_{\alpha,\beta,\gamma,\delta}$\\
\cline{1-6} 
$N$& &&$\times$&& \\
\vdots&&&\vdots&& $\Pi^{0,l}_{\gamma,\delta}p$\\
$N^{B}+1$&&&$\times$&&\\
\cline{2-6}
&&&&&\\
$N^{B}$&$\times$&&& &  $ \displaystyle \Pi^{0,N}_{\alpha,\beta}p-\sum_{l'>N^{B}}\Pi^{0,l'}_{\gamma,\delta}p\geq 0$ \\
\cline{2-6}
$N^{B}-1$&$\times$&&&&\\
\vdots &\vdots&&&& $0$\\
$\lL$&$\times$&&&&\\
\cline{1-6}
\end{tabular}
\end{center}
\end{table}
\begin{table}[htb]\caption{$N$ birth rate, $N^{B}=\lL-1$ ($B^{N,\lL}_{\alpha,\beta}> 0$)}\label{tabNB2}
\begin{center}
\begin{tabular}{|l|ccc|cc|}
\cline{1-6}
&&&&&\\
$l$&$B_{\alpha, \beta}^{N,l+1}$& $\wedge$ &$\Pi_{\gamma, \delta}^{0,l}p$&=& $H^{0,N,0,l}_{\alpha,\beta,\gamma,\delta}$\\
\cline{1-6} 
&&&&&\\
$N$ & &&$\times$&& \\
\vdots&&&\vdots &&$\Pi^{0,l}_{\gamma,\delta}p$\\
$\lL$&&&$\times$&&\\
\cline{1-6}
\end{tabular}
\end{center}
\end{table}
The attractiveness problem coming from either birth or death rates is solved. Indeed
\begin{lemma}
Under Condition (\ref{C+}),
$$
N^B \vee N^{d+}\geq \lL.
$$
\label{newthird}
\end{lemma}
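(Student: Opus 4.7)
The plan is to proceed by contradiction. Assume that $N^{d+} < \lL$ and $N^B < \lL$ simultaneously. By the case analysis at the end of Steps 1 and 2, each of these indices is either at least $\lL$ (in which case the corresponding remainder of the downwards recursion vanishes identically) or equal to $\lL-1$ (in which case the corresponding remainder is strictly positive, and the construction will be pushed onto Step $3a$ or Step $3b$). So the negation of the conclusion forces
$$J^{N,\lL}_{\alpha,\beta}>0 \qquad\text{and}\qquad B^{N,\lL}_{\alpha,\beta}>0$$
simultaneously, and using the explicit formulas (\ref{remainder}) and (\ref{remainder2}), after dividing through by $p>0$ and adding the two strict inequalities, I obtain
$$\Gamma^{N}_{\alpha,\beta}+\Pi^{0,N}_{\alpha,\beta}>\sum_{l\geq \lL}\bigl(\Gamma^{l}_{\gamma,\delta}+\Pi^{0,l}_{\gamma,\delta}\bigr).$$

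Next I would show that the hypothesis of the lemma forces the opposite (non-strict) inequality. Specifically, I would instantiate Condition (\ref{C+}) with $K=1$, $j_1=\lL-1$, and $m_1=N$. A direct bookkeeping check using (\ref{Ia})--(\ref{Ib}) shows that with this choice the set $\{k:k>\delta-\beta+j_1\}$ reduces to $\{N\}$ (recall $\lL=N-\delta+\beta$), that $I_a=\{N\}$, and that both $\{l:l>j_1\}$ and $I_b$ coincide with $\{l\geq \lL\}$ once one uses that $\Gamma^{l}_{\gamma,\delta}=0$ for $l>N$. Under Remark \ref{abuse}, the resulting instance of (\ref{C+}) is precisely the inequality (\ref{condN}),
$$\Pi^{0,N}_{\alpha,\beta}+\Gamma^{N}_{\alpha,\beta}\leq \sum_{l\geq \lL}\bigl(\Pi^{0,l}_{\gamma,\delta}+\Gamma^{l}_{\gamma,\delta}\bigr),$$
which directly contradicts the strict inequality produced from the assumed positivity of the two remainders. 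This contradiction establishes $N^{B}\vee N^{d+}\geq \lL$.

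I do not expect any real obstacle: the only slightly delicate step is the parameter-matching computation that reduces (\ref{C+}) to (\ref{condN}), which is a routine check from the definitions of $I_a$, $I_b$ together with Hypothesis \ref{hip}. The conceptual content is simply that Condition (\ref{C+}) at $(K,j_1,m_1)=(1,\lL-1,N)$ is exactly the inequality saying that the total ``upward'' mass at the lower configuration that would break the order if acted upon alone is bounded by the aggregate mass on the upper configuration that can safely be matched to it, which is precisely what Steps 1 and 2 try to exhaust.
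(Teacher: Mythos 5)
Your proof is correct and follows essentially the same argument as the paper: both proceed by contradiction, observe that $N^{d+}<\lL$ and $N^B<\lL$ force the remainders $J^{N,\lL}_{\alpha,\beta}$ and $B^{N,\lL}_{\alpha,\beta}$ (equivalently, the inequalities $J^{N,\lL+1}_{\alpha,\beta}>\Gamma^{\lL}_{\gamma,\delta}p$ and $B^{N,\lL+1}_{\alpha,\beta}>\Pi^{0,\lL}_{\gamma,\delta}p$ used in the paper) to be strictly positive, and add the two resulting strict inequalities to contradict (\ref{condN}), which is the instance of (\ref{C+}) at $K=1$, $j_1=\lL-1$, $m_1=N$. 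The parameter-matching reduction of (\ref{C+}) to (\ref{condN}) that you sketch is exactly the one the paper already carried out when introducing (\ref{condN}).
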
 
\begin{proof}.
If this is not the case, by (\ref{gammaNsim}), Table \ref{tabNd1} and definition of $N^{d+}$
\begin{equation}
\Gamma^{N}_{\alpha,\beta}p-\sum_{l'>\lL}\Gamma^{l'}_{\gamma,\delta}p=J_{\alpha,\beta}^{N,\lL+1}>\Gamma_{\gamma, \delta}^{\lL}p.
\label{new_step1}
\end{equation}
By (\ref{PiN}), Table \ref{tabNB1} and definition of $N^{B}$
\begin{equation}
\Pi^{0,N}_{\alpha,\beta}p-\sum_{l'>\lL}\Pi^{0,l'}_{\gamma,\delta}p=B_{\alpha,\beta}^{N,\lL+1}>\Pi_{\gamma, \delta}^{0,\lL}p.
\label{new_step2}
\end{equation}
The sum of (\ref{new_step1}) and (\ref{new_step2}) contradicts (\ref{condN}).
\end{proof}
If $\beta$ is $N-$good, then $H^{0,N,\cdot,\cdot}_{\alpha,\beta,\gamma,\delta}=0$ and we define $N^{B}=N^{d+}=N+1$.
\begin{remark}
If $\gamma$ is $N-$bad, the construction involving death rates works in a similar way with symmetric definitions of $N^{D}$ and of a remainder $D^{k,N}_{\gamma,\delta}$. As in Lemma \ref{newthird}, under Condition (\ref{C-}),
\begin{equation}
N^{D}\vee N^{d-}\geq \lU.
\label{newthirdmeno}
\end{equation}
If $\gamma$ is $N-$good, then $H^{\cdot,\cdot,-N,0}_{\alpha,\beta,\gamma,\delta}=0$ and we define $N^{D}=N^{d-}=N+1$.
\end{remark}

\noindent \textbf{Step 3a)} Suppose $\beta$ is $N-$bad and $N^{d+}=\lL-1$. We come back to Step $1$, where even if the remaining part of $\Gamma^{N}_{\alpha,\beta}p$ was still positive at step $\lL$, we decided to stop. We refer to Table $\ref{tabN3a}$.\\
By Lemma $\ref{newthird}$, $N^B\geq \lL$.  We use the upper configuration birth rate remaining from Step $2$ in order to solve the attractiveness problem: we couple the remainder from Step $1$ of $\Gamma^{N}_{\alpha,\beta}p$ with the remainder from Step $2$ of $\Pi^{0,N}_{\gamma,\delta}p$, 
$$
H^{N,N,0,N}_{\alpha,\beta,\gamma,\delta}:=J^{N,\lL}_{\alpha,\beta}\wedge [\Pi^{0,N}_{\gamma,\delta}p-\Pi^{0,N}_{\gamma,\delta}p\wedge \Pi^{0,N}_{\alpha,\beta}p].
$$
Then, if the minimum is the second term we proceed downwards in $l$ with terms 
\begin{align}
H^{N,N,0,l}_{\alpha,\beta,\gamma,\delta}:=&[J^{N,\lL}_{\alpha,\beta}-\sum_{l'>l}H^{N,N,0,l'}_{\alpha,\beta,\gamma,\delta}]\wedge [\Pi^{0,l}_{\gamma,\delta}p-\Pi^{0,l}_{\gamma,\delta}p\wedge B^{N,l+1}_{\alpha,\beta}]\nonumber\\
=:&\mathcal{J}^{N,l+1}_{\alpha,\beta}\wedge [\Pi^{0,l}_{\gamma,\delta}p-\Pi^{0,l}_{\gamma,\delta}p\wedge B^{N,l+1}_{\alpha,\beta}].
\label{a540}
\end{align}
where \emph{$\mathcal{J}^{N,l}_{\alpha,\beta}$ is the remainder of the jump ($\mathcal{J}$) rate  $\Gamma^{N}_{\alpha,\beta}p$ left over after $l^{th}$ step}. Notice that $\mathcal{J}^{N,N+1}_{\alpha,\beta}=J^{N,\lL}_{\alpha,\beta}$.\\
We proceed with (\ref{a540}) until the minimum is the first term, in which case there is no remainder of $\Gamma^{N}_{\alpha,\beta}p$, so the attractiveness problem is solved, and we define $N^{dB}$ to be the first such $l$. In other words when $l>N^{dB}$, the coupling term is the remainder of the upper configuration $l$-birth rate, when $l=N^{dB}$ it is the remainder of the lower configuration $N$--jump rate, and when $l<N^{dB}$ the coupling terms are null. 
\begin{remark}
If $l>N^{dB}$ then the minimum of (\ref{a540}) is the second term, which depends on Step $1$:\\
-if $l>N^{B}$, by Table \ref{tabNB1}, $B^{N,l+1}_{\alpha,\beta}\wedge \Pi^{0,l}_{\gamma,\delta}p= \Pi^{0,l}_{\gamma,\delta}p$ and by (\ref{a540}), $H^{N,N,0,l}_{\alpha,\beta,\gamma,\delta}=0;$\\
-if $l=N^{B}$, by (\ref{a540}), $H^{N,N,0,l}_{\alpha,\beta,\gamma,\delta}=\Pi^{0,l}_{\gamma,\delta}p-B^{N,l+1}_{\alpha,\beta}=\sum_{l'\geq l}\Pi^{0,l'}_{\gamma,\delta}p-\Pi^{0,N}_{\alpha,\beta}p;$\\
-if $l<N^{B}$ then $B^{N,l+1}_{\alpha,\beta}\wedge \Pi^{0,l}_{\gamma,\delta}p=B^{N,l+1}_{\alpha,\beta}=0$ and 
$H^{N,N,0,l}_{\alpha,\beta,\gamma,\delta}=\Pi^{0,l}_{\gamma,\delta}p.$\\
In other words even if the minimum is the second term in $H^{N,N,0,l}_{\alpha,\beta,\gamma,\delta}$, it could be null, when solving the lower attractiveness problem left no remainder of $\Pi^{0,l}_{\gamma,\delta}p$. It means that positive coupling terms begin below $N^{B}$.
\label{remthird}
\end{remark}
If $J^{N,\lL}_{\alpha,\beta}=0$, that is if $N^{d+}\geq \lL$, we put $N^{dB}=N+1$. We give it the same value if $\beta$ is $N-$good: in these cases $H^{N,N,0,l}_{\alpha,\beta,\gamma,\delta}=0$ for each $l>0$.

\begin{table}[htb]\caption{Third step, $N^{d+}=\lL-1$ ($J^{N,\lL}_{\alpha,\beta}>0$)}\label{tabN3a}
\begin{center}
\begin{tabular}{|l|ccc|cc|}
\cline{1-6}
&&&&&\\
$l$& $\mathcal{J}^{N,l+1}_{\alpha,\beta}$ & $\wedge$ & $[\Pi^{0,l}_{\gamma,\delta}p-\Pi^{0,l}_{\gamma,\delta}p\wedge B^{N,l+1}_{\alpha,\beta}]$& = & $H^{N,N,0,l}_{\alpha,\beta,\gamma,\delta}$\\
\cline{1-6} 
$N$& &&$\times$& &\\
\vdots&&&\vdots& &$\Pi^{0,l}_{\gamma,\delta}p-\Pi^{0,l}_{\gamma,\delta}p\wedge B^{N,l+1}_{\alpha,\beta}$\\
$N^{dB}+1$& & &$\times$ &&\\
\cline{2-6}
&&&&&\\
$N^{dB}$&$\times$&&& &$\mathcal{J}^{N,l+1}_{\alpha,\beta}\geq 0$ \\
\cline{2-6}
$N^{dB}-1$&$\times$&&&&\\
\vdots &\vdots&&& &$\mathcal{J}^{N,l+1}_{\alpha,\beta}=0$\\
0&$\times$&&&&\\
\cline{1-6}
\end{tabular}
\end{center}
\end{table}

If $\gamma$ is $N-$bad and $N^{d+}=\lL-1$ then $N^{d-}=N$ by Remark \ref{rem_Nd} and we do not need Step $3a$ for the upper jump rate. If $N^{d-}=\lU-1$ (in this case $N^{d+}=N$), the construction works in a symmetric way, by coupling the remainder of the upper configuration jump rate $\mathcal{J}^{\lU,N}_{\gamma,\delta}$ defined in a similar way with the remainder of the death rate from Step $2$. If $\gamma$ is $N-$good we put $N^{dD}=N+1$: in these cases $H^{-k,0,N,N}_{\alpha,\beta,\gamma,\delta}=0$ for each $k>0$.\\

\noindent \textbf{Step 3b)} Suppose $\beta$ is $N-$bad and $N^{B}=\lL-1$. We come back to Step $2$, where even if the remainder of $\Pi^{0,N}_{\alpha,\beta}p$ was still positive at step $\lL$, we decided to stop. We refer to Table $\ref{tabN3b}$. By Lemma $\ref{newthird}$, $N^{d+}\geq \lL$. We cannot couple $B^{N,\lL}_{\alpha,\beta}$ with the upper configuration jump rates remaining from Step $2$ with $l>\gamma-\alpha$, because the final states we would reach are $(\alpha,\beta+N),(\gamma-l,\delta+l)$, which break the partial order. Therefore we put
\begin{equation}
H^{0,N,l,l}_{\alpha,\beta,\gamma,\delta}:=0 \text{ for each }l>\gamma-\alpha
\label{0Nll>}
\end{equation}
Then we couple the remainder $B^{N,\lL}_{\alpha,\beta}$ from Step $2$ of $\Pi^{0,N}_{\alpha,\beta}p$ with the upper configuration jump rates remaining from Step $1$ that do not break the partial order in $\alpha$ to solve the attractiveness problem, that is $\Gamma^{l}_{\gamma,\delta}p$ with $l\leq \gamma-\alpha$, 
\begin{equation}
H^{0,N,l,l}_{\alpha,\beta,\gamma,\delta}:=B^{N,\lL}_{\alpha,\beta}\wedge [\Gamma^{l}_{\gamma,\delta}p-J^{N,l+1}_{\alpha,\beta}\wedge \Gamma^{l}_{\gamma,\delta}p]\quad \text{ with }l=\gamma-\alpha,
\label{temp30}
\end{equation}
if the minimum is the second term we proceed downwards in $l$ with $\Gamma^{l}_{\gamma,\delta}p$ with $l<\gamma-\alpha$
\begin{align}
H^{0,N,l,l}_{\alpha,\beta,\gamma,\delta}:=&[B^{N,\lL}_{\alpha,\beta}-\sum_{\gamma-\alpha\geq l'>l}H^{0,N,l',l'}_{\alpha,\beta,\gamma,\delta}]\wedge [\Gamma^{l}_{\gamma,\delta}p-J^{N,l+1}_{\alpha,\beta}\wedge \Gamma^{l}_{\gamma,\delta}p]\nonumber \\
=:&\mathcal{B}^{N,l+1}_{\alpha,\beta}\wedge [\Gamma^{l}_{\gamma,\delta}p-J^{N,l+1}_{\alpha,\beta}\wedge \Gamma^{l}_{\gamma,\delta}p]
\label{temp3}
\end{align}
where \emph{$\mathcal{B}^{N,l}_{\alpha,\beta}$ is the remainder of the birth ($\mathcal{B}$) rate  $\Pi^{0,N}_{\alpha,\beta}p$ left over after step $l^{th}$ in Step $3b$}. Notice that $\mathcal{B}^{N,N+1}_{\alpha,\beta}=B^{N,\lL}_{\alpha,\beta}$.\\
We proceed with (\ref{temp3}) until the minimum is the first term, in which case there is no remainder of $\Pi^{0,N}_{\alpha,\beta}p$, so the attractiveness problem is solved, and we define $N^{Bd}$ to be the first such $l$. In other words when $l>N^{Bd}$, the coupling term is the remainder of the upper configuration $l$-jump rate, when $l=N^{Bd}$ it is the remainder of the lower configuration $N$-birth rate, and when $l<N^{Bd}$ the coupling terms are null.\\
\begin{remark}
If $l>N^{Bd}$ then the minimum in (\ref{temp3}) is the second term, which depends on 
Step $2$:\\
$\bullet$ if $l>N^{d+}$ then $J^{N,l+1}_{\alpha,\beta}\wedge \Gamma^{l}_{\gamma,\delta}p= \Gamma^{l}_{\gamma,\delta}p$ by Table \ref{tabNd1} and 
\begin{equation}
H^{0,N,l,l}_{\alpha,\beta,\gamma,\delta}=0;
\label{p30}
\end{equation}
$\bullet$ if $l=N^{d+}\geq \lL$ (see Table $\ref{tabNd1}$), then $J^{N,l+1}_{\alpha,\beta}\wedge \Gamma^{l}_{\gamma,\delta}p= J^{N,l+1}_{\alpha,\beta}$ and 
\begin{equation}
H^{0,N,l,l}_{\alpha,\beta,\gamma,\delta}=\Gamma^{l}_{\gamma,\delta}p-J^{N,l+1}_{\alpha,\beta}=\sum_{l'\geq N^{d+}}\Gamma^{l'}_{\gamma,\delta}p-\Gamma^{N}_{\alpha,\beta}p;
\label{tempast}
\end{equation}
$\bullet$ if $l<N^{d+}$ then $J^{N,l+1}_{\alpha,\beta}\wedge \Gamma^{l}_{\gamma,\delta}p=J^{N,l+1}_{\alpha,\beta}=0$ by Table \ref{tabNd1}; hence 
$$
H^{0,N,l,l}_{\alpha,\beta,\gamma,\delta}=\Gamma^{l}_{\gamma,\delta}p \qquad \text{ if }0<l<N^{d+}.
$$
In other words if there is no remainder of $\Gamma^{l}_{\gamma,\delta}p$ after Step $2$, the coupling term $H^{0,N,l,l}_{\alpha,\beta,\gamma,\delta}$ is null even if the attractiveness problem is not solved yet. It means that positive coupling terms begin below $N^{d+}$.
\label{remsecond}
\end{remark}
If $B^{N,\lL}_{\alpha,\beta}=0$, that is if $N^{B}\geq \lL$, we put $N^{Bd}=N+1$. We give it the same value if $\beta$ is $N-$good: in these cases $H^{0,N,l,l}_{\alpha,\beta,\gamma,\delta}=0$ for each $l>0$. We refer to Table $\ref{tabN3b}$.

\begin{table}[htb]\caption{Third step, $N^B=\lL-1$ ($B^{N,\lL}_{\alpha,\beta}>0$)}\label{tabN3b}
\begin{center}
\begin{tabular}{|l|ccc|cc|}
\cline{1-6}
&&&&&\\
$l$& $\mathcal{B}^{N,l+1}_{\alpha,\beta}$ & $\wedge$ & $[\Gamma^{l}_{\gamma,\delta}p-J^{N,l+1}_{\alpha,\beta}\wedge \Gamma^{l}_{\gamma,\delta}p]$& = & $H^{0,N,l,l}_{\alpha,\beta,\gamma,\delta}$\\
\cline{1-6} 
$\gamma-\alpha$& &&$\times$& &\\
\vdots&&&\vdots& &$\Gamma^{l}_{\gamma,\delta}p-J^{N,l+1}_{\alpha,\beta}\wedge \Gamma^{l}_{\gamma,\delta}p$\\
$N^{Bd}+1$& & &$\times$ &&\\
\cline{2-6}
&&&&&\\
$N^{Bd}$&$\times$&&& &$\mathcal{B}^{N,l+1}_{\alpha,\beta}\geq 0$ \\
\cline{2-6}
$N^{Bd}-1$&$\times$&&&&\\
\vdots &\vdots&&& &$\mathcal{B}^{N,l+1}_{\alpha,\beta}=0$\\
0&$\times$&&&&\\
\cline{1-6}
\end{tabular}
\end{center}
\end{table}
\begin{remark}
In Step 3 we do not impose any restriction as in previous steps, the pairs of values $(\alpha-N,\beta+N),(\gamma,\delta+l)$ or $(\alpha,\beta+N),(\gamma-l,\delta+l)$ breaking the partial order on $\beta$ (that is such that $\beta+N>\delta+l$) that we could reach a priori will be avoided by attractiveness (see Proposition \ref{imp}).
\label{restr_attr}
\end{remark}
If $\gamma$ is $N-$bad and $N^{D}=\lU-1$ then the symmetric construction allows to define the index $N^{Dd}$ of the last positive coupling term and the remainder $\mathcal{D}^{k,N}_{\gamma,\delta}$ of the death rate  $\Pi^{-N,0}_{\gamma,\delta}p$ after Step $1$ and $k$ recursions in Step $3b$. Remark \ref{restr_attr} works in a symmetric way. If $\gamma$ is $N-$good we put $N^{Dd}=N+1$ and $H^{k,k,-N,0}_{\alpha,\beta,\gamma,\delta}=0$ for each $k>0$.
\begin{remark}
In the indexes we have defined, the superscript $B$ means birth, $D$ death, $d^{+}$ diffusion (jump) on the lower configuration, $d^{-}$ diffusion (jump) on the upper configuration, $Bd$ birth and diffusion, $dB$ diffusion and birth, $Dd$ death and diffusion, $dD$ diffusion and death.
\end{remark}

Once we constructed the coupling rates involving $\Gamma^{N}_{\alpha,\beta}p$, $\Pi^{0,N}_{\alpha,\beta}p$, $\Gamma^{N}_{\gamma,\delta}p$ and $\Pi^{-N,0}_{\gamma,\delta}p$ we move to rates involving less than $N$ particles. If $\beta$ is $(N-1)$-bad (and/or $\gamma$ is $(N-1)$-bad) in order to solve the lower (higher) attractiveness problems given by $\Gamma_{\alpha,\beta}^{N-1}p$ and $\Pi^{0,N-1}_{\alpha,\beta}p$ ($\Gamma_{\gamma,\delta}^{N-1}p$ and $\Pi_{\gamma,\delta}^{-(N-1),0}p$) (or their remainders from previous steps) we repeat Step $1$, Step $2$ and Step $3$ for such terms. We proceed this way with the remainders of $\Gamma_{\alpha,\beta}^{k}p$ and $\Pi^{0,k}_{\alpha,\beta}p$ ($\Gamma_{\gamma,\delta}^{l}p$ and $\Pi_{\gamma,\delta}^{-l,0}p$) going downwards with respect to $k$ ($l$) until $\beta$ is $k$-good ($\gamma$ is $l$-good).\\

\textit{We couple an upper configuration jump rate $\Gamma^{l}_{\gamma,\delta}p$ with a lower configuration birth rate $\Pi^{0,k}_{\alpha,\beta}p$ if $\beta+k>l$ and $\alpha\leq \gamma-l$ in order to solve a lower attractiveness problem, and with a lower configuration death rate $\Pi^{-k,0}_{\alpha,\beta}p$ if $\alpha> \gamma-l$ in order to solve a higher attractiveness problem: we cannot couple the same higher jump rate both with lower birth and death rates. A symmetric remark holds for lower jump rates}.\\

Coupling rates are given below by downwards recursive relations
\begin{definition}
Given $k\leq N$ and $l\leq N$, the possible non null coupling terms are given by
$$
\begin{array}{ll}
\bullet \text{Step 1 }& \nonumber \\
& J^{k,l+1}_{\alpha,\beta}:=\displaystyle \Gamma^{k}_{\alpha,\beta}p-\sum_{l'>l}H^{k,k,l',l'}_{\alpha,\beta,\gamma,\delta}-\sum_{l'>l}H^{k,k,-l',0}_{\alpha,\beta,\gamma,\delta}\\
&J^{k+1,l}_{\gamma,\delta}:=\displaystyle \Gamma^{l}_{\gamma,\delta}p-\sum_{k'>k}H^{k',k',l,l}_{\alpha,\beta,\gamma,\delta}-\sum_{k'>k}H^{0,k',l,l}_{\alpha,\beta,\gamma,\delta}\\
&H^{k,k,l,l}_{\alpha,\beta,\gamma,\delta}=J^{k,l+1}_{\alpha,\beta}\wedge J^{k+1,l}_{\gamma,\delta}
\qquad \text{ if }(\alpha>\gamma-l \text{ or } \beta+k>\delta) \text{ and }\nonumber \\
&\hspace{5.5 cm}(\beta+k\leq \delta+l)\text{ and }(\alpha-k \leq \gamma-l)
\label{HJ}\\
\bullet\text{Step 2 }&\text{(births)}\nonumber\\
& B^{k,l+1}_{\alpha,\beta}:=\displaystyle \Pi^{0,k}_{\alpha,\beta}p-\sum_{l'>l}H^{0,k,0,l'}_{\alpha,\beta,\gamma,\delta}\\
&B^{k+1,l}_{\gamma,\delta}:=\displaystyle \Pi^{0,l}_{\gamma,\delta}p-\sum_{k'>k}H^{0,k',0,l}_{\alpha,\beta,\gamma,\delta}-\sum_{k'>k}H^{k',k',0,l}_{\alpha,\beta,\gamma,\delta}\\
&H^{0,k,0,l}_{\alpha,\beta,\gamma,\delta}=B^{k,l+1}_{\alpha,\beta}\wedge B^{k+1,l}_{\gamma,\delta} \qquad \text{ if }\delta+l\geq \beta+k>\delta
\label{HB1}\\
\bullet\text{ Step 2 }&\text{(deaths)} \nonumber\\
&D^{k,l+1}_{\alpha,\beta}:=\displaystyle \Pi^{-k,0}_{\alpha,\beta}p-\sum_{l'> l}H^{-k,0,-l',0}_{\alpha,\beta,\gamma,\delta}-\sum_{l'>l}H^{-k,0,l',l'}_{\alpha,\beta,\gamma,\delta}\\ 
&D^{k+1,l}_{\gamma,\delta}:= \displaystyle \Pi^{-l,0}_{\gamma,\delta}p-\sum_{k'>k}H^{-k',0,-l,0}_{\alpha,\beta,\gamma,\delta} \\
&H^{-k,0,-l,0}_{\alpha,\beta,\gamma,\delta}=D^{k,l+1}_{\alpha,\beta}\wedge D^{k+1,l}_{\gamma,\delta}\qquad \text{if }\alpha>\gamma-l\geq \alpha-k
\label{HD1}\\
\nonumber\\
\bullet\text{Step 3a)}&\text{(births)}\nonumber \\
&\mathcal{J}^{k,l+1}_{\alpha,\beta}=\displaystyle\Gamma^{k}_{\alpha,\beta}p-\sum_{l'\geq k-\delta+\beta}H^{k,k,l',l'}_{\alpha,\beta,\gamma,\delta}-\sum_{l'>l}H^{k,k,0,l'}_{\alpha,\beta,\gamma,\delta}\\
&H^{k,k,0,l}_{\alpha,\beta,\gamma,\delta}=\displaystyle
\mathcal{J}^{k,l+1}_{\alpha,\beta}\wedge [B^{k+1,l}_{\gamma,\delta}-H^{0,k,0,l}_{\alpha,\beta,\gamma,\delta}]\qquad \text{if }\beta+k>l
\label{H31a}\\
\bullet\text{Step 3a)}&\text{(deaths)}\nonumber \\
&\mathcal{J}^{k+1,l}_{\gamma,\delta}=\displaystyle\Gamma^{l}_{\gamma,\delta}p-\sum_{k'\geq l-\gamma+\alpha}H^{k',k',l,l}_{\alpha,\beta,\gamma,\delta}-\sum_{k'>k}H^{-k',0,l,l}_{\alpha,\beta,\gamma,\delta} 
\\
&H^{-k,0,l,l}_{\alpha,\beta,\gamma,\delta}=[D^{k,l+1}_{\alpha,\beta}-H^{-k,0,-l,0}_{\alpha,\beta,\gamma,\delta}]\wedge \mathcal{J}^{k+1,l}_{\gamma,\delta}\qquad \text{if }\alpha>\gamma-l
\label{H32a}\\
\nonumber\\
\bullet\text{Step 3b)}&\text{(births)}\nonumber \\
&\mathcal{B}^{k,l+1}_{\alpha,\beta}=\displaystyle \Pi^{0,k}_{\alpha,\beta}p-\sum_{l'\geq k-\delta+\beta}H^{0,k,0,l'}_{\alpha,\beta,\gamma,\delta}-\sum_{\gamma-\alpha\geq l'>l}H^{0,k,l',l'}_{\alpha,\beta,\gamma,\delta}\\
&H^{0,k,l,l}_{\alpha,\beta,\gamma,\delta}=\mathcal{B}^{k,l+1}_{\alpha,\beta}\wedge [J^{k+1,l}_{\gamma,\delta}-H^{k,k,l,l}_{\alpha,\beta,\gamma,\delta}]\qquad \text{if }\beta+k>\delta  \text{ and }\alpha \leq \gamma-l;
\label{H31b}\\
\bullet\text{Step 3b)}&\text{(deaths)}\nonumber \\
&\mathcal{D}^{k+1,l}_{\gamma,\delta}=\displaystyle \Pi^{-l,0}_{\gamma,\delta}p-\sum_{k'\geq l-\gamma+\alpha}H^{-k',0,-l,0}_{\alpha,\beta,\gamma,\delta}-\sum_{\delta-\beta\geq k'>k}H^{k',k',-l,0}_{\alpha,\beta,\gamma,\delta}\\
&H^{k,k,-l,0}_{\alpha,\beta,\gamma,\delta}=\displaystyle[J^{k,l+1}_{\alpha,\beta}-H^{k,k,l,l}_{\alpha,\beta,\gamma,\delta}]\wedge \mathcal{B}^{k+1,l}_{\gamma,\delta}\qquad \text{if }\alpha>\gamma-l  \text{ and }\beta+k\leq \delta
\label{H32b}
\end{array}
$$
$\bullet$ The uncoupled terms are
\begin{align}
i)\quad &H^{k,k,0,0}_{\alpha,\beta,\gamma,\delta}=\left\{ \begin{array}{ll}
\mathcal{J}^{k,1}_{\alpha,\beta}&\text{ if }\beta+k>\delta,\\
J^{k,1}_{\alpha,\beta}& \text{ otherwise; }
\end{array}
\right.&
ii)\quad &H^{0,0,l,l}_{\alpha,\beta,\gamma,\delta}= \left\{ \begin{array}{ll} 
\mathcal{J}^{1,l}_{\gamma,\delta}&\text{ if } \gamma-l<\alpha,\\
J^{1,l}_{\gamma,\delta}&\text{ otherwise; }
\end{array}
\right.
\label{lastjump}\\
i)\quad &H^{0,k,0,0}_{\alpha,\beta,\gamma,\delta}=\left\{ \begin{array}{ll}
\mathcal{B}^{k,1}_{\alpha,\beta}&\text{ if }\beta+k>\delta,\\
B^{k,1}_{\alpha,\beta}& \text{ otherwise; }
\end{array}
\right.&
ii)\quad &H^{0,0,0,l}_{\alpha,\beta,\gamma,\delta}=B^{1,l}_{\gamma,\delta}
\label{lastbirth}\\
i)\quad &H^{0,0,-l,0}_{\alpha,\beta,\gamma,\delta}=\left\{ \begin{array}{ll}
\mathcal{D}^{1,l}_{\gamma,\delta}&\text{ if }\gamma-l<\alpha,\\
D^{1,l}_{\gamma,\delta}& \text{ otherwise; }
\end{array}
\right.&
ii)\quad &H^{-k,0,0,0}_{\alpha,\beta,\gamma,\delta}=D^{k,1}_{\alpha,\beta}
\label{lastdeath}
\end{align}
\label{couplingHgen}
\end{definition}
\begin{remark}
The uncoupled terms are given by the remainders of the original rates: this ensures that we actually get a coupling. For instance, if $\beta+k>\delta$, then by (\ref{lastjump}) $(i)$, (\ref{H32b}) and (\ref{H31a}) we get
$$
\sum_{l\geq 0}(H^{k,k,l,l}_{\alpha,\beta,\gamma,\delta}+H^{k,k,0,l}_{\alpha,\beta,\gamma,\delta}+H^{k,k,-l,0}_{\alpha,\beta,\gamma,\delta})=\mathcal{J}^{k,1}_{\alpha,\beta}+\sum_{l>0}(H^{k,k,l,l}_{\alpha,\beta,\gamma,\delta}+H^{k,k,0,l}_{\alpha,\beta,\gamma,\delta})=\Gamma^k_{\alpha,\beta}p
$$ 
since $H^{k,k,l,l}_{\alpha,\beta,\gamma,\delta}=0$ for each $l<k-\delta+\beta$. If $\beta+k\leq \delta$, by (\ref{lastjump}) $(i)$ and (\ref{HJ}) 
$$
\sum_{l\geq 0}(H^{k,k,l,l}_{\alpha,\beta,\gamma,\delta}+H^{k,k,0,l}_{\alpha,\beta,\gamma,\delta}+H^{k,k,-l,0}_{\alpha,\beta,\gamma,\delta})=J^{k,1}_{\alpha,\beta}+\sum_{l>0}(H^{k,k,l,l}_{\alpha,\beta,\gamma,\delta}+H^{k,k,-l,0}_{\alpha,\beta,\gamma,\delta})=\Gamma^k_{\alpha,\beta}p
$$ 
since by (\ref{H31a}) $H^{k,k,0,l}_{\alpha,\beta,\gamma,\delta}=0$ for each $l>0$. One gets all the marginals by summing the coupling terms in a similar way.
\label{remark_first_terms}
\end{remark}
\begin{remark}
Coupling rates involving the largest change of $N$ particles constructed above can be obtained by an explicit calculation starting from Definition \ref{couplingHgen}. For instance Formula (\ref{gammaN0}) corresponds to (\ref{HJ}) with $k=N$. Indeed if $\beta$ is $N$-bad the coupling terms $H^{N,N,-l,0}_{\alpha,\beta,\gamma,\delta}=0$ by (\ref{H32b}),  and if $k=N$ then $\sum_{k'>N}(H^{k',k',l,l}_{\alpha,\beta,\gamma,\delta}+H^{0,k',l,l}_{\alpha,\beta,\gamma,\delta})=0$. Formulas (\ref{gammaNsim}), (\ref{PiN}), (\ref{a540}) and (\ref{temp3}) can be obtained in a similar way.
\end{remark}
The easier formulation of coupling $\mathcal{H}$ when $N=1$ is derived in Appendix \ref{app}.\\

We explicitly constructed the coupling rates such that the largest change of $N$ particles breaks the partial order: the following proposition proves that coupling $\mathcal{H}$ is increasing for such rates, under Conditions ($\ref{C+}$) and ($\ref{C-}$) if $\beta$ or/and $\gamma$ are bad values. Indeed it states that all coupling terms that would break the order of configurations are equal to $0$.
\begin{proposizione}
$i)$ If $\beta$ is $N$-bad and $l<\lL$ then under Condition ($\ref{C+}$)
\begin{align}
H^{N,N,l,l}_{\alpha,\beta,\gamma,\delta}=
H^{0,N,0,l}_{\alpha,\beta,\gamma,\delta}=
H^{0,N,l,l}_{\alpha,\beta,\gamma,\delta}=
H^{N,N,0,l}_{\alpha,\beta,\gamma,\delta}=0.
\label{impi}
\end{align}
$ii)$ If $\gamma$ is $N$-bad and $k<\lU$, then under Condition ($\ref{C-}$)
\begin{align} 
H^{k,k,N,N}_{\alpha,\beta,\gamma,\delta}=
H^{-k,0,-N,0}_{\alpha,\beta,\gamma,\delta}=
H^{k,k,-N,0}_{\alpha,\beta,\gamma,\delta}=
H^{-k,0,N,N}_{\alpha,\beta,\gamma,\delta}=0.
\label{impii}
\end{align}
\label{imp}
\end{proposizione}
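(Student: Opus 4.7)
Parts $(i)$ and $(ii)$ are exchanged by the natural symmetry of the coupling $\mathcal{H}$ that trades births on $y$ for deaths from $x$ (and condition $(\ref{C+})$ for $(\ref{C-})$), so I only sketch $(i)$. The first two identities, $H^{N,N,l,l}_{\alpha,\beta,\gamma,\delta}=0$ and $H^{0,N,0,l}_{\alpha,\beta,\gamma,\delta}=0$ for $0<l<\lL$, are immediate from the construction: they are precisely the conventions $(\ref{a69})$ and $(\ref{a69bis})$, imposed in Steps 1 and 2 to forbid couplings that would yield $\beta+N>\delta+l$ on $y$.

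The content of the proposition is therefore the vanishing of the two Step~3 families $H^{N,N,0,l}_{\alpha,\beta,\gamma,\delta}$ and $H^{0,N,l,l}_{\alpha,\beta,\gamma,\delta}$ below $\lL$. The backbone is Lemma $\ref{newthird}$, $N^{d+}\vee N^{B}\geq \lL$, which says that at most one of the two Step~3 recursions can be ``active.'' If both $N^{d+}\geq \lL$ and $N^{B}\geq \lL$, then $J^{N,\lL}_{\alpha,\beta}=B^{N,\lL}_{\alpha,\beta}=0$, the recursions $\mathcal{J}^{N,\cdot}_{\alpha,\beta}$ and $\mathcal{B}^{N,\cdot}_{\alpha,\beta}$ are identically zero, and both Step~3 families vanish for every $l$.

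Assume next that $N^{d+}=\lL-1$; then $N^{B}\geq \lL$, so $B^{N,\lL}_{\alpha,\beta}=0$, Step~3b produces nothing, and $H^{0,N,l,l}=0$ for every $l$. It remains to show $\mathcal{J}^{N,\lL}_{\alpha,\beta}=0$, which kills $H^{N,N,0,l}$ below $\lL$. Because the Step~3a recursion $(\ref{a540})$ is greedy, this reduces to a single capacity inequality: the cumulative residual capacity
\[
\sum_{l\geq \lL}\bigl[\Pi^{0,l}_{\gamma,\delta}p-H^{0,N,0,l}_{\alpha,\beta,\gamma,\delta}\bigr]
\]
must dominate the initial demand $J^{N,\lL}_{\alpha,\beta}$. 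Telescoping the Step~2 recursion with $B^{N,\lL}_{\alpha,\beta}=0$ evaluates the sum as $\sum_{l\geq \lL}\Pi^{0,l}_{\gamma,\delta}p-\Pi^{0,N}_{\alpha,\beta}p$, so the requirement rearranges to $\Gamma^{N}_{\alpha,\beta}+\Pi^{0,N}_{\alpha,\beta}\leq \sum_{l\geq \lL}(\Gamma^{l}_{\gamma,\delta}+\Pi^{0,l}_{\gamma,\delta})$, which is exactly $(\ref{condN})$, i.e.\ condition $(\ref{C+})$ with $K=1$, $j_1=\lL-1$, $m_1=N$.

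The symmetric subcase $N^{B}=\lL-1$ kills Step~3a and reduces $H^{0,N,l,l}=0$ to $\mathcal{B}^{N,\lL}_{\alpha,\beta}=0$ via the greedy recursion $(\ref{temp3})$. This is the main obstacle, because Step~3b is restricted to $\lL\leq l\leq \gamma-\alpha$. If $\gamma-\alpha<\lL$, then condition $(\ref{C+})$ specialized to $K=1$, $j_1=\lL-1$, $m_1=0$ degenerates to $\Pi^{0,N}_{\alpha,\beta}\leq \sum_{l\geq \lL}\Pi^{0,l}_{\gamma,\delta}$, contradicting $N^{B}=\lL-1$, so the subcase is vacuous. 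When $\gamma-\alpha\geq \lL$, the same specialization delivers the required capacity bound once one subtracts the portion of $\Gamma^{l}_{\gamma,\delta}p$ with $l\in[\lL,\gamma-\alpha]$ already absorbed by Step~1; a short case split on whether $N^{d+}$ lies above or below $\gamma-\alpha$ closes the argument.
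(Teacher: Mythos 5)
Your proposal is correct, and it follows the same skeleton as the paper's own proof (identify the definitional zeros \eqref{a69}, \eqref{a69bis}; invoke Lemma~\ref{newthird} to show that in any given case exactly one of Steps~3a/3b can be active; then specialize Condition~\eqref{C+} with $K=1$, $(j_1,m_1)=(\lL-1,N)$ to get \eqref{condN}, and with $(j_1,m_1)=(\lL-1,0)$ for the $\gamma-\alpha$-limited case), but the logical organization differs in a way worth noting. The paper argues by contradiction: it assumes $N^{dB}<\lL$ (resp.\ $N^{Bd}<\lL$), unrolls the recursions \eqref{a540} and \eqref{temp3} to produce the explicit remainder formulas \eqref{tmp_T} and \eqref{tmp_T_bis}, plugs in $l=\lL$, and derives a violation of \eqref{condN} or of the $(j_1,m_1)=(\lL-1,0)$ specialization. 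You instead sum the residual capacities $\Pi^{0,l}_{\gamma,\delta}p-H^{0,N,0,l}_{\alpha,\beta,\gamma,\delta}$ (resp.\ $\Gamma^{l}_{\gamma,\delta}p-J^{N,l+1}_{\alpha,\beta}\wedge\Gamma^{l}_{\gamma,\delta}p$) over $l\geq\lL$ (resp.\ $\lL\leq l\leq\gamma-\alpha$), telescope them once using the vanishing of the Step-2 (resp.\ Step-1) remainder guaranteed by Lemma~\ref{newthird}, and invoke the monotone, greedy nature of the recursions to conclude the demand is exhausted by step $\lL$. The direct supply-vs-demand phrasing is cleaner and better isolates exactly where each specialization of \eqref{C+} enters; the paper's contradiction route buys explicit closed forms for the remainders, which it reuses in the later Proposition~\ref{attrdue}. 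Two caveats on your write-up: (a) the final ``short case split on whether $N^{d+}$ lies above or below $\gamma-\alpha$'' is only a pointer — to be complete you should record that when $N^{d+}>\gamma-\alpha$ the whole supply interval $[\lL,\gamma-\alpha]$ still has full capacity $\Gamma^l_{\gamma,\delta}p$ and the bound is the $(j_1,m_1)=(\lL-1,0)$ specialization, while when $\lL\leq N^{d+}\leq\gamma-\alpha$ the supply telescopes to $\sum_{l\geq\lL}\Gamma^l_{\gamma,\delta}p-\Gamma^N_{\alpha,\beta}p$ and the bound is \eqref{condN}; (b) note that the correct form of the $(j_1,m_1)=(\lL-1,0)$ specialization is $\Pi^{0,N}_{\alpha,\beta}\leq\sum_{l\geq\lL}\Pi^{0,l}_{\gamma,\delta}+\sum_{\gamma-\alpha\geq l\geq\lL}\Gamma^l_{\gamma,\delta}$ with a plus, which is what you use; the paper's displayed \eqref{contr2} has a spurious minus sign in front of the $\Gamma$-sum.
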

\begin{proof} 
$i)$ Suppose $l<\lL$. Then $H^{0,N,0,l}_{\alpha,\beta,\gamma,\delta}=H^{N,N,l,l}_{\alpha,\beta,\gamma,\delta}=0$ by (\ref{a69}) and (\ref{a69bis}).\\ 
$\bullet$ Suppose $N^{d+}=\lL-1$, then $N^B\geq \lL$ by Lemma \ref{newthird} and $H^{0,N,l,l}_{\alpha,\beta,\gamma,\delta}=0$ for each $l$ by (\ref{a68bis}).  Since $H^{N,N,0,l}_{\alpha,\beta,\gamma,\delta}$ is null for each $l<N^{dB}$ by Table \ref{tabN3a}, we prove that $N^{dB}\geq \lL$.\\ 
Assume by contradiction that $N^{dB}<\lL$. By Definition of $N^{dB}$, (\ref{a540}), Table \ref{tabNB1} and Remark $\ref{remthird}$,
\begin{align}
H^{N,N,0,\lL}_{\alpha,\beta,\gamma,\delta}=&\Pi^{0,\lL}_{\gamma,\delta}p-B^{N,\lL+1}_{\alpha,\beta}\wedge (\Pi^{0,\lL}_{\gamma,\delta}p)\nonumber \\
=&\Pi^{0,\lL}_{\gamma,\delta}p-\I_{\{\lL=N^{B}\}}(\Pi^{0,N}_{\alpha,\beta}p-\sum_{l'>N^{B}}\Pi^{0,l'}_{\gamma,\delta}p)<\mathcal{J}^{N,\lL+1}_{\alpha,\beta}.
\label{a54}
\end{align}
We compute $\mathcal{J}^{N,l+1}_{\alpha,\beta}$ if $l>N^{dB}$. By (\ref{a540}), (\ref{remainder}) and Remark $\ref{remthird}$
\begin{align}
\mathcal{J}^{N,l+1}_{\alpha,\beta}=&\Gamma^{N}_{\alpha,\beta}p-\sum_{l'\geq \lL}\Gamma^{l'}_{\gamma,\delta}p-\sum_{l'>l}H^{N,N,0,l'}_{\alpha,\beta,\gamma,\delta} \nonumber\\
=&\Gamma^{N}_{\alpha,\beta}p-\sum_{l'\geq \lL}\Gamma^{l'}_{\gamma,\delta}p-\sum_{l'>l}\big\{\I_{\{l'< N^{B}\}}\Pi^{0,l'}_{\gamma,\delta}p+\I_{\{l'=N^{B}\}}H^{N,N,0,N^{B}}_{\alpha,\beta,\gamma,\delta}\big\}\nonumber \\
=&\Gamma^{N}_{\alpha,\beta}p-\sum_{l'\geq \lL}\Gamma^{l'}_{\gamma,\delta}p-\sum_{N^{B}>l'>l}\Pi^{0,l'}_{\gamma,\delta}p\nonumber \\
&-\I_{\{N^{B}>l\}}\big(\Pi^{0,N^{B}}_{\gamma,\delta}p-(\Pi^{0,N}_{\alpha,\beta}p-\sum_{l'> N^{B}}\Pi^{0,l'}_{\gamma,\delta}p)\big).\label{tmp_T}
\end{align}
Hence by (\ref{a54}), 
$$ \mathcal{J}^{N,N^{d+}+1}_{\alpha,\beta}> \Pi^{0,\lL}_{\gamma,\delta}p-\I_{\{\lL=N^{B}\}}(\Pi^{0,N}_{\alpha,\beta}p-\sum_{l'>N^{B}}\Pi^{0,l'}_{\gamma,\delta}p).
$$
This implies that if either $N^{B}=\lL$ or $N^{B}>\lL$
\begin{equation}
\Gamma^{N}_{\alpha,\beta}p+\Pi^{0,N}_{\alpha,\beta}p>\sum_{l'\geq \lL}\Gamma^{l'}_{\gamma,\delta}p-\sum_{l'\geq \lL}\Pi^{0,l'}_{\gamma,\delta}p
\label{contr1}
\end{equation}
which contradicts (\ref{condN}).\\ 

$\bullet$ Suppose $N^B=\lL-1$, that is $B^{N,\lL}_{\alpha,\beta}>0$. Then $N^{d+}\geq \lL$ by Lemma \ref{newthird} and $H^{N,N,0,l}_{\alpha,\beta,\gamma,\delta}=0$ for each $l$ by (\ref{a27}).\\
Notice that Condition (\ref{C+}) with $K=1$, $m_1=0$ and $j_1=\lL-1$ reduces to 
\begin{equation}
\Pi^{0,N}_{\alpha,\beta}p\leq \sum_{l'\geq \lL}\Pi^{0,l'}_{\gamma,\delta}p-\sum_{\gamma-\alpha\geq l'\geq \lL}\Gamma^{l'}_{\gamma,\delta}p
\label{contr2}
\end{equation}
which contradicts $B^{N,\lL}_{\alpha,\beta}>0$ if $\lL>\gamma-\alpha$. Therefore we assume $\lL\leq \gamma-\alpha$.\\
Since $H^{0,N,l,l}_{\alpha,\beta,\gamma,\delta}$ is null for each $l<N^{Bd}$ by Table \ref{tabN3b}, we prove that $N^{Bd}\geq \lL$. Suppose by contradiction that $N^{Bd}<\lL\leq \gamma-\alpha$. By Definition of $N^{Bd}$, (\ref{temp3}), Table \ref{tabNd1} and Remark $\ref{remsecond}$,
\begin{align}
H^{0,N,\lL,\lL}_{\alpha,\beta,\gamma,\delta}=&\Gamma^{\lL}_{\gamma,\delta}p-J^{N,\lL+1}_{\alpha,\beta}\wedge (\Gamma^{\lL}_{\gamma,\delta}p)\nonumber \\
=&\Gamma^{\lL}_{\gamma,\delta}p-\I_{\{\lL=N^{d+}\}}(\Gamma^{N}_{\alpha,\beta}p-\sum_{l'>N^{d+}}\Gamma^{l'}_{\gamma,\delta}p)<\mathcal{B}^{N,\lL+1}_{\alpha,\beta}.
\label{a54new}
\end{align}
We compute $\mathcal{B}^{N,l+1}_{\alpha,\beta}$ if $l>N^{Bd}$. By (\ref{temp3}), (\ref{remainder2}) and Remark $\ref{remsecond}$
\begin{align}
\mathcal{B}^{N,l+1}_{\alpha,\beta}=&\Pi^{0,N}_{\alpha,\beta}p-\sum_{l'\geq \lL}\Pi^{0,l'}_{\gamma,\delta}p-\sum_{\gamma-\alpha\geq l'>l}H^{0,N,l',l'}_{\alpha,\beta,\gamma,\delta}\nonumber \\
=&\Pi^{0,N}_{\alpha,\beta}p-\sum_{l'\geq \lL}\Pi^{0,l'}_{\gamma,\delta}p-\sum_{\gamma-\alpha\geq l'>l}\big\{\I_{\{N^{d+}>l'\}}\Gamma^{l'}_{\gamma,\delta}p+\I_{\{l'=N^{d+}\}}H^{0,N,N^{d+},N^{d+}}_{\alpha,\beta,\gamma,\delta}\big\}\nonumber \\
=&\Pi^{0,N}_{\alpha,\beta}p-\sum_{l'\geq \lL}\Pi^{0,l'}_{\gamma,\delta}p-\sum_{N^{d+}\wedge(\gamma-\alpha+1)>l'>l}\Gamma^{l'}_{\gamma,\delta}p\nonumber \\
&-\I_{\{\gamma-\alpha \geq N^{d+}>l\}}\big(\Gamma^{N^{d+}}_{\gamma,\delta}p-(\Gamma^{N}_{\alpha,\beta}p-\sum_{l'> N^{d+}}\Gamma^{l'}_{\gamma,\delta}p)\big).\label{tmp_T_bis}
\end{align}
Hence by (\ref{a54new}), 
$$
\mathcal{B}^{N,\lL+1}_{\alpha,\beta}>\Gamma^{\lL}_{\gamma,\delta}p-\I_{\{\lL=N^{d+}\}}(\Gamma^{N}_{\alpha,\beta}p-\sum_{l'>N^{d+}}\Gamma^{l'}_{\gamma,\delta}p).
$$
If $\gamma-\alpha\geq N^{d+}\geq \lL$ this is equivalent to (\ref{contr1}), hence a contradiction. If $N^{d+}>\gamma-\alpha$ we get
$$
\Pi^{0,N}_{\alpha,\beta}p>\sum_{l'\geq \lL}\Pi^{0,l'}_{\gamma,\delta}p-\sum_{\gamma-\alpha\geq l'\geq \lL}\Gamma^{l'}_{\gamma,\delta}p
$$
which contradicts (\ref{contr2}).\\
Claim $(ii)$ is proved by symmetric arguments.
\end{proof}

\begin{remark}
As a consequence, Tables $\ref{tabN3a}$ and $\ref{tabN3b}$ (and the symmetric ones) do not contain any coupling term breaking the partial order between configurations. 
\end{remark} 

\subsection{Sufficient condition}
\label{sufcond}

We complete the proof of Theorem $\ref{cns}$ by
\begin{proposizione}
Under Conditions ($\ref{C+}$)--($\ref{C-}$), $\mathcal{H}$ is increasing.
\label{suf}
\end{proposizione}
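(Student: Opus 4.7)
The plan is to prove increasingness by a downward induction on the number $k$ of particles involved in a transition, running from $k = N$ down to $k = 1$. At each level I show that those coupling terms of $\mathcal{H}$ whose associated joint transition would break the partial order on $(x,y)$ are identically zero. The base case $k = N$ is exactly Proposition \ref{imp}. For the inductive step, I fix $k < N$, assume that for every $k' > k$ all dangerous coupling terms involving $k'$ particles vanish, and verify the analog at level $k$.

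At level $k$ I would set $\widehat{k}^+ := k - \delta + \beta$ and $\widehat{k}^- := k - \gamma + \alpha$, playing the role of $\widehat{N}^+, \widehat{N}^-$. The potentially order-breaking terms are $H^{k,k,l,l}_{\alpha,\beta,\gamma,\delta}$, $H^{0,k,0,l}_{\alpha,\beta,\gamma,\delta}$, $H^{0,k,l,l}_{\alpha,\beta,\gamma,\delta}$, $H^{k,k,0,l}_{\alpha,\beta,\gamma,\delta}$ for $l < \widehat{k}^+$, plus the symmetric upper-side terms for $k < \widehat{k}^-$, plus the uncoupled rates $H^{k,k,0,0}_{\alpha,\beta,\gamma,\delta} = \mathcal{J}^{k,1}_{\alpha,\beta}$ and $H^{0,k,0,0}_{\alpha,\beta,\gamma,\delta} = \mathcal{B}^{k,1}_{\alpha,\beta}$ (and their analogs) in the case $\beta + k > \delta$. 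Several of these vanish tautologically by Definition \ref{couplingHgen}, so the core task is to establish the analog of Lemma \ref{newthird} at level $k$, namely that the indices $N^{B}_k, N^{d+}_k$ introduced by Steps $1$ and $2$ at level $k$ satisfy $N^{B}_k \vee N^{d+}_k \geq \widehat{k}^+$, together with the symmetric $N^{D}_k \vee N^{d-}_k \geq \widehat{k}^-$. Once this is in hand, the contradiction argument used for Steps $3a$ and $3b$ in the proof of Proposition \ref{imp} transfers with $N, \widehat{N}^+$ replaced by $k, \widehat{k}^+$.

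To prove the analog of Lemma \ref{newthird} at level $k$, I would apply Conditions (\ref{C+}) and (\ref{C-}) with parameters tailored to strip away precisely the history consumed by the induction hypothesis. Concretely, taking $K = N - k + 1$ and choosing the non-decreasing tuples $\mathbf{j}, \mathbf{m}$ (respectively $\mathbf{h}, \mathbf{m}$) so that the sets $I_a, I_b$ (respectively $I_c, I_d$) from (\ref{Ia})--(\ref{Id}) index exactly the rates $\Gamma^{k'}_{\cdot,\cdot}$ and $\Pi^{0,k'}_{\cdot,\cdot}$ ($\Pi^{-k',0}_{\cdot,\cdot}$) with $k' \geq k$ that are either consumed at the current level or already paired off at higher levels. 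The resulting inequality telescopes into a pure statement about the remainders $J^{k,\widehat{k}^+}_{\alpha,\beta}$, $B^{k,\widehat{k}^+}_{\alpha,\beta}$ versus what is still available on the upper side, yielding the desired analog of Lemma \ref{newthird} and, by the same contradiction-based reasoning as in Proposition \ref{imp}, the vanishing of all dangerous coupling terms at level $k$.

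The main obstacle will be the combinatorial bookkeeping: at level $k$ the remainders $J, B, \mathcal{J}, \mathcal{B}$ and their upper analogs carry the cumulative memory of all pairings made at levels $k' > k$, so the parameters $K, \mathbf{j}, \mathbf{m}, \mathbf{h}$ plugged into Conditions (\ref{C+})--(\ref{C-}) must mirror this memory exactly. Translating the recursive Definition \ref{couplingHgen} into the union-of-intervals language of (\ref{Ia})--(\ref{Id}) is the technical heart of the argument. Once the induction closes, Proposition \ref{attpairs} allows me to superpose the increasing couplings $\mathcal{H}(x,y)$ over all pairs $(x,y)$ and conclude that the global coupling of $(\xi_t, \eta_t)_{t \geq 0}$ is increasing, which is the desired sufficient condition completing the proof of Theorem \ref{cns}.
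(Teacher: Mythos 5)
Your overall scaffolding matches the paper's: a downward induction on the number of particles, with Proposition~\ref{imp} as the base case and an analog of Lemma~\ref{newthird} at each lower level as the engine of the induction, closed off by Proposition~\ref{attpairs} at the end. However, there is a genuine gap at the heart of the inductive step, and it is exactly the point the paper spends most of its effort on.

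The difficulty is that at level $k<N$ the coupling terms $H^{k,k,l,l}_{\alpha,\beta,\gamma,\delta}$, $H^{0,k,0,l}_{\alpha,\beta,\gamma,\delta}$, etc.\ are defined through the remainders $J^{k,l}_{\alpha,\beta}$, $B^{k,l}_{\alpha,\beta}$, $\mathcal{J}^{k,l}_{\alpha,\beta}$, $\mathcal{B}^{k,l}_{\alpha,\beta}$, which encode the branching history of the recursion at levels $>k$ (whether $N^{d+}\geq\lL$ or $N^{d+}=\lL-1$, whether $N^{B}\geq\lL$ or $N^{B}=\lL-1$, and the symmetric choices). Conditions~(\ref{C+})--(\ref{C-}), by contrast, involve only the original rates $\Gamma^{k'}_{\cdot,\cdot},\Pi^{0,k'}_{\cdot,\cdot},\Pi^{-k',0}_{\cdot,\cdot}$. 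You assert that a suitable choice of $K=N-k+1$ and tuples $\mathbf{j},\mathbf{m},\mathbf{h}$ makes the unions $I_a,\ldots,I_d$ ``index exactly the rates \ldots that are either consumed at the current level or already paired off at higher levels,'' and that the inequality then ``telescopes into a pure statement about the remainders.'' That is not evident: the remainders are not uniform truncations by $k'\geq k$, they are case-dependent expressions, and translating them into the union-of-intervals form of~(\ref{Ia})--(\ref{Id}) is precisely the non-trivial bookkeeping you flag but do not carry out. The paper resolves this not by a direct telescoping but by introducing a reduced system $\esse$ (Definition~\ref{newsystem}) whose rates are the remainders after removing the $N$-level coupled transitions, and by proving (a) $\esse$ satisfies Conditions~(\ref{C+})--(\ref{C-}) in their own right (Proposition~\ref{attrdue}, ``the harder part,'' with the two-case analysis $N^{d-}=N$ versus $N^{d+}=N$ and further subcases), and (b) $\mathcal{H}(\mathcal{S})$ and $\mathcal{H}(\esse)$ define the same coupling terms below level $N$ (Proposition~\ref{HHbar}). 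Point (b) is essential and absent from your plan: without it, the induction hypothesis --- which is phrased for a system $\mathcal{S}^*$ with $n(\mathcal{S}^*)\leq N-1$ and its own coupling $\mathcal{H}(\mathcal{S}^*)$ --- cannot be directly invoked on the lower-level pieces of $\mathcal{H}(\mathcal{S})$.

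In short, you have the right strategy and correctly locate where the work has to happen, but the step you defer to ``combinatorial bookkeeping'' is the theorem, not a routine verification. To repair this in your framework you would either need to establish the analog of Proposition~\ref{attrdue} directly (which amounts to reconstructing the $\esse$-argument), or adopt the paper's device of a reduced system explicitly and supply Proposition~\ref{HHbar}-type identification. You should also be careful with the base of the induction: the paper descends not to $k=1$ in general but to $k=(\delta-\beta)\vee(\gamma-\alpha)$ (when one of $\beta,\gamma$ becomes good) or to the $\beta=\delta$, $\alpha=\gamma$, $n_1=1$ situation handled in Appendix~\ref{app}, and the two cases (only one attractiveness problem versus both) are treated separately.
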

In order to prove Proposition $\ref{suf}$, we define a new system $\esse$ (in fact a new pair of systems $\esse:=(\overline{\widetilde{\mathcal{S}}},\esse)$ by Remark $\ref{abuse}$), depending on $\mathcal{S}$ and $\mathcal{H}$, whose rates are those of $\mathcal{S}$ to whom we subtract the coupled rates of $\mathcal{H}$ involving changes  of $N$ particles.  
\begin{definition}
Given $\mathcal{S}$, $\esse$ has the transition rates
\begin{align*}
\oGamma^{k}_{\alpha,\beta}p=&\left\{
\begin{array}{ll}
\mathcal{J}^{N,1}_{\alpha,\beta} &\text{ if } (k=N, \beta+N>\delta)\\
J^{k,N}_{\alpha,\beta} &\text{ if } k<N \text{ or }\\
& (k=N, \beta+N\leq \delta);
\end{array}
\right.&
\oGamma^{l}_{\gamma,\delta}p=&\left\{
\begin{array}{ll}
\mathcal{J}^{1,N}_{\gamma,\delta}& \text{ if }(l=N, \gamma-N<\alpha),\\
J^{N,l}_{\gamma,\delta}& \text{if }l<N \text{ or }\\
& (l=N,\gamma-l\geq \alpha);
\end{array}
\right.\\
\\
\oPi^{0,k}_{\alpha,\beta}p=&\left\{
\begin{array}{ll}
\mathcal{B}^{N,1}_{\alpha,\beta}& \text{ if }k=N,\\
\Pi^{0,k}_{\alpha,\beta}p &  \text{ if }k<N;
\end{array}
\right.&
\oPi^{0,l}_{\gamma,\delta}p=&B^{N,l}_{\gamma,\delta}\text{ for each }l;\\\\
\oPi^{-k,0}_{\alpha,\beta}p=&D^{-k,N}_{\alpha,\beta}\text{ for each }k;&
\oPi^{-l,0}_{\gamma,\delta}p=&\left\{
\begin{array}{ll}
\mathcal{D}^{1,N}_{\gamma,\delta}& \text{ if }l=N, \\
\Pi^{-l,0}_{\gamma,\delta}p& \text{ if }l<N.\\
\end{array}
\right.
\end{align*}
\label{newsystem}
\end{definition}
Our plan consists in working by induction on the largest change of particles $n(\mathcal{S})$ which causes either a lower or a higher attractiveness problem: given the particle system $\mathcal{S}$, it is defined by
\begin{align}
W:=&\{k:(k>\delta-\beta \text{ or }k>\gamma-\alpha)\text{ and } (\Gamma^{k}_{\alpha,\beta}+\Gamma^{k}_{\gamma,\delta}+\Pi^{0,k}_{\alpha,\beta}+\Pi^{-k,0}_{\gamma,\delta}> 0)\}\nonumber\\
n(\mathcal{S})=&n(\mathcal{S},\alpha,\beta,\gamma,\delta):=\left\{ 
\begin{array}{ll}
\sup W & \text{ if }W\neq \emptyset;\\
(\delta-\beta)\wedge (\gamma-\alpha) & \text{ otherwise.}
\end{array}
\right. 
\label{enne}
\end{align}
\begin{remark}
If $\gamma$ (resp. $\beta$) is $N$-good, then $n(\mathcal{S})=\sup\{k:(k>\delta-\beta) \text{ and } (\Gamma^{k}_{\alpha,\beta}+\Pi^{0,k}_{\alpha,\beta}> 0)\}$ (resp. $n(\mathcal{S})=\sup\{k:(k>\gamma-\alpha) \text{ and } (\Gamma^{k}_{\gamma,\delta}+\Pi^{-k,0}_{\gamma,\delta}> 0)\}$).
\label{rem_enne}
\end{remark}
By Remark \ref{remN}, $n(\mathcal{S})=N$. Let $\oN=n(\overline{\mathcal{S}})$.


We prove that if $\mathcal{S}$ satisfies Conditions (\ref{C+})--(\ref{C-}) and $n(\mathcal{S})=N$, then $\mathcal{H}=\mathcal{H}(\mathcal{S})$ is increasing. The induction hypothesis is: if a particle system $\mathcal{S}^*$ satisfies Conditions (\ref{C+})--(\ref{C-}) and $n(\mathcal{S}^*)\leq N-1$, then $\mathcal{H}(\mathcal{S}^*)$ is increasing.\\
We give an outline of the proof: suppose that the induction hypothesis is satisfied. We defined a new system $\overline{\mathcal{S}}$. By Proposition \ref{NLU}, $\overline{n}\leq N-1$ and by Proposition \ref{attrdue}, it satisfies Conditions (\ref{C+})--(\ref{C-}). Therefore we can use the induction hypothesis and $\overline{\mathcal{H}}=\mathcal{H}(\overline{\mathcal{S}})$ is increasing. This implies, by Proposition \ref{HHbar}, that $\mathcal{H}(\mathcal{S})$ is increasing.\\
\begin{proposizione}
If either $\beta$ or $\gamma$ (or both) are $N$-bad, then $\oN\leq N-1$.
\label{NLU}
\end{proposizione}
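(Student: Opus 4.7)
The plan is to show that the modified system $\overline{\mathcal{S}}$ retains no rate at the critical level $k=N$ that could contribute to an attractiveness problem. Since the rates of $\overline{\mathcal{S}}$ are bounded above by those of $\mathcal{S}$, no $k>N$ enters $W(\overline{\mathcal{S}})$; the whole task therefore reduces to showing $\oGamma^{N}_{\alpha,\beta}=\oPi^{0,N}_{\alpha,\beta}=0$ when $\beta$ is $N$-bad, and symmetrically $\oGamma^{N}_{\gamma,\delta}=\oPi^{-N,0}_{\gamma,\delta}=0$ when $\gamma$ is $N$-bad.

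Assume $\beta$ is $N$-bad. By Definition~\ref{newsystem}, $\oGamma^{N}_{\alpha,\beta}p=\mathcal{J}^{N,1}_{\alpha,\beta}$. Unwinding the Step~3a recursion one has $\mathcal{J}^{N,1}_{\alpha,\beta}=J^{N,\lL}_{\alpha,\beta}-\sum_{l\geq 1}H^{N,N,0,l}_{\alpha,\beta,\gamma,\delta}$, and Proposition~\ref{imp}(i) forces $H^{N,N,0,l}=0$ for $l<\lL$, so $\mathcal{J}^{N,1}_{\alpha,\beta}=\mathcal{J}^{N,\lL}_{\alpha,\beta}$. I distinguish two cases. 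If $N^{d+}\geq \lL$, then $J^{N,\lL}_{\alpha,\beta}=0$ by Table~\ref{tabNd1}, the convention of Step~3a fixes $N^{dB}=N+1$, and every $\mathcal{J}^{N,l}_{\alpha,\beta}$ equals $J^{N,\lL}_{\alpha,\beta}=0$. If instead $N^{d+}=\lL-1$, the proof of Proposition~\ref{imp}(i) established, under Condition~(\ref{C+}), the stronger quantitative bound $N^{dB}\geq \lL$; by the defining property of $N^{dB}$, $\mathcal{J}^{N,N^{dB}}_{\alpha,\beta}=0$, and since $H^{N,N,0,l}=0$ for $l<N^{dB}$, this propagates down to $\mathcal{J}^{N,\lL}_{\alpha,\beta}=0$. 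Hence $\oGamma^{N}_{\alpha,\beta}=0$. An entirely parallel argument on Step~3b, splitting on $N^{B}$ and using the companion conclusion $N^{Bd}\geq \lL$ of Proposition~\ref{imp}(i), yields $\oPi^{0,N}_{\alpha,\beta}p=\mathcal{B}^{N,1}_{\alpha,\beta}=0$. If $\gamma$ is $N$-bad, the symmetric assertions $\oGamma^{N}_{\gamma,\delta}p=\mathcal{J}^{1,N}_{\gamma,\delta}=0$ and $\oPi^{-N,0}_{\gamma,\delta}p=\mathcal{D}^{1,N}_{\gamma,\delta}=0$ follow from the same dichotomy applied to $N^{d-}$, $N^{D}$, $N^{dD}$, $N^{Dd}$ with $\lU$ in place of $\lL$, invoking Proposition~\ref{imp}(ii).

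Combining these four vanishings, any attempted membership $N\in W(\overline{\mathcal{S}})$ collapses: the disjunct $N>\delta-\beta$ requires $\oGamma^{N}_{\alpha,\beta}+\oPi^{0,N}_{\alpha,\beta}>0$, which is killed by the $\beta$ $N$-bad analysis, and $N>\gamma-\alpha$ analogously requires $\oGamma^{N}_{\gamma,\delta}+\oPi^{-N,0}_{\gamma,\delta}>0$. Therefore $\oN\leq N-1$. The main subtlety is not the triviality that coupling terms for $l<\lL$ are zero, but the complementary quantitative statement that the construction has actually exhausted the whole lower $N$-particle rate by step $\lL$; this is precisely what the bounds $N^{dB}\vee N^{d+}\geq \lL$ and $N^{Bd}\vee N^{B}\geq \lL$ extracted in the proof of Proposition~\ref{imp} supply.
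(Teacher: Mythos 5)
Your proof is correct and follows essentially the same route as the paper's: reduce to showing the $N$-particle remainders $\mathcal{J}^{N,1}_{\alpha,\beta}$, $\mathcal{B}^{N,1}_{\alpha,\beta}$ (and symmetrically $\mathcal{J}^{1,N}_{\gamma,\delta}$, $\mathcal{D}^{1,N}_{\gamma,\delta}$) vanish, then conclude $N\notin W(\overline{\mathcal{S}})$. The only cosmetic difference is that the paper short-circuits your explicit unwinding by identifying $\oGamma^N_{\alpha,\beta}p=\mathcal{J}^{N,1}_{\alpha,\beta}=H^{N,N,0,0}_{\alpha,\beta,\gamma,\delta}$ and $\oPi^{0,N}_{\alpha,\beta}p=\mathcal{B}^{N,1}_{\alpha,\beta}=H^{0,N,0,0}_{\alpha,\beta,\gamma,\delta}$ via (\ref{lastjump})--(\ref{lastbirth}) and applying Proposition~\ref{imp}(i) directly at $l=0$, whereas you redevelop the bounds $N^{dB}\geq\lL$, $N^{Bd}\geq\lL$ from inside that proposition's proof.
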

\begin{proof}. If $\beta$ is $N$-bad, we prove that $\oGamma^{N}_{\alpha,\beta}=0=\oPi^{0,N}_{\alpha,\beta}$. By Definition \ref{newsystem} and (\ref{lastjump}) $i)$
\begin{align*}
\oGamma^{N}_{\alpha,\beta}p=&\mathcal{J}^{N,1}_{\alpha,\beta}=H^{N,N,0,0}_{\alpha,\beta,\gamma,\delta};
\end{align*}
and by Definition \ref{newsystem} and (\ref{lastbirth}) $i)$ 
\begin{align*}
\oPi^{0,N}_{\alpha,\beta}p=&\mathcal{B}^{N,1}_{\alpha,\beta}=H^{0,N,0,0}_{\alpha,\beta,\gamma,\delta}.
\end{align*}
Since $\beta+N>\delta+0$, by Proposition $\ref{imp}$ $i)$, $H^{N,
N,0,0}_{\alpha,\beta,\gamma,\delta}=0$, $H^{0,N,0,0}_{\alpha,\beta,\gamma,\delta}=0$ and we are done. In a symmetric way $\oGamma^{N}_{\gamma,\delta}=\oPi^{-N,0}_{\gamma,\delta}=0$ if $\gamma$ is $N$-bad.\\
If both $\beta$ and $\gamma$ are $N$-bad, by (\ref{enne}), $\overline{n}\leq N-1$. If $\beta$ is $N$-bad but $\gamma$ is $N$-good, then $\overline{n}=\sup\{k:(k>\delta-\beta) \text{ and } (\oGamma^{k}_{\alpha,\beta}+\oPi^{0,k}_{\alpha,\beta})> 0\}$. Again $\overline{n}\leq N-1$ since $\oGamma^{N}_{\alpha,\beta}$ and $\oPi^{0,N}_{\alpha,\beta}$ are null. The same conclusion holds if $\gamma$ is $N$-bad and $\beta$ is good. 
\end{proof}

The harder part is:
\begin{proposizione}
If $\mathcal{S}$ satisfies Conditions $(\ref{C+})$--$(\ref{C-})$, then so does $\esse$.
\label{attrdue}
\end{proposizione}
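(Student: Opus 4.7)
The plan is to verify Conditions (\ref{C+})--(\ref{C-}) for $\overline{\mathcal{S}}$ by comparing them termwise to the corresponding conditions for $\mathcal{S}$, which hold by assumption. Fix $K$, $\mathbf{j},\mathbf{m},\mathbf{h}$ and $(\alpha,\beta)\leq(\gamma,\delta)$, and denote by $L(\mathcal{S}), R(\mathcal{S})$ the left- and right-hand sides of (\ref{C+}), and by $\overline{L},\overline{R}$ the same quantities with $\overline{\mathcal{S}}$ rates substituted. Since $\overline{\mathcal{S}}$ is obtained from $\mathcal{S}$ by subtracting the coupling rates $H^{\cdot,\cdot,\cdot,\cdot}_{\alpha,\beta,\gamma,\delta}$ used at level $N$, both differences $L-\overline{L}$ and $R-\overline{R}$ are finite nonnegative sums of such coupling terms. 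Condition (\ref{C+}) for $\mathcal{S}$ (i.e.\ $L\leq R$) thus implies the corresponding condition for $\overline{\mathcal{S}}$ as soon as $R-\overline{R}\leq L-\overline{L}$; and the same strategy handles (\ref{C-}).

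To establish this inequality, I would expand $L-\overline{L}$ and $R-\overline{R}$ using Definition \ref{newsystem} together with the marginal identities of Remark \ref{remark_first_terms}. The term $L-\overline{L}$ aggregates, when $N>\delta-\beta+j_1$, the sum of coupling terms $H^{0,N,\cdot,\cdot}$ extracted from $\Pi^{0,N}_{\alpha,\beta}$, together with all terms $H^{k,k,N,N}$ and $H^{k,k,-N,0}$ for $k\in I_a$, and, when $N\in I_a$, the Step 3 couplings $H^{N,N,0,\cdot}$ and $H^{N,N,-\cdot,0}$ that deplete $\Gamma^{N}_{\alpha,\beta}$. Symmetrically, $R-\overline{R}$ collects coupling terms that deplete the upper rates $\Pi^{0,l}_{\gamma,\delta}$ (for $l>j_1$) and $\Gamma^{l}_{\gamma,\delta}$ (for $l\in I_b$). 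Every individual $H$-term appears once in each difference through its two index pairs (lower and upper), so the required inequality reduces to a termwise check that whenever a coupling term contributes on the upper side it also contributes on the lower side. This rests on two ingredients: the index constraints enforced by (\ref{HJ})--(\ref{H32b}), which ensure the pair of indices $(k,l)$ of each nonzero $H$ preserves the partial order on $x$ and $y$; and Proposition \ref{imp}, which annihilates those level-$N$ coupling terms whose indices would violate that order.

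The anticipated main obstacle is the case-by-case bookkeeping. One must split according to whether $\beta$ and/or $\gamma$ is $N$-bad, which dictates whether $\overline{\Pi}^{0,N}_{\alpha,\beta}, \overline{\Gamma}^{N}_{\alpha,\beta}, \overline{\Pi}^{-N,0}_{\gamma,\delta}, \overline{\Gamma}^{N}_{\gamma,\delta}$ take their reduced forms $\mathcal{B}^{N,1}, \mathcal{J}^{N,1}, \mathcal{D}^{1,N}, \mathcal{J}^{1,N}$, and whether Steps 3a or 3b have been triggered. The treatment of $H^{k,k,0,l}$ and $H^{0,k,l,l}$ (Steps 3a and 3b), which pair jumps with births across the two systems, is the most delicate, because one must distinguish sub-cases depending on the values of $N^{d+},N^{B},N^{dB},N^{Bd}$ and their symmetric counterparts $N^{d-},N^{D},N^{dD},N^{Dd}$, to identify precisely which remainders $\mathcal{J},\mathcal{B},\mathcal{D}$ are nonzero. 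Once the cases are organized, the termwise comparison follows from the structure of $I_a,I_b,I_c,I_d$ in (\ref{Ia})--(\ref{Id}) — these sets are crafted exactly to capture the indices relevant for the stochastic ordering — combined with Proposition \ref{imp}, and the symmetric reasoning yields Condition (\ref{C-}) for $\overline{\mathcal{S}}$.
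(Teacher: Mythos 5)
Your strategy starts the same way as the paper's: expand $\overline{L}$ and $\overline{R}$ via Definition \ref{newsystem} and the marginal identities (this is exactly (\ref{lhs1})--(\ref{temprightside}), after Remark \ref{Iset} reduces to $j_K<\lL$). But the linchpin of your argument --- that $R-\overline{R}\leq L-\overline{L}$ holds for the \emph{same} interval data $K,\mathbf{j},\mathbf{m}$, so that (\ref{C+bar}) would follow from (\ref{C+}) by subtraction --- is false, and the failure is structural, not bookkeeping. In the paper's Case A ($N^{d-}=N$, so $\Gamma^{N}_{\alpha,\beta}>\Gamma^{N}_{\gamma,\delta}$ and $H^{k,k,N,N}_{\alpha,\beta,\gamma,\delta}=0$ for $k<N$), take the sub-case $N^{d+}\in I_{b}\setminus\{N\}$ with $N\notin I_{a}$. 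There $L-\overline{L}=\Pi^{0,N}_{\alpha,\beta}p$, while the condition $\overline{L}\leq\overline{R}$ reduces (eq.\ (\ref{tempgamma4})) to
\[
\sum_{k>\delta-\beta+j_1}\Pi^{0,k}_{\alpha,\beta}+\sum_{k\in I_a\cup\{N\}}\Gamma^k_{\alpha,\beta}\ \leq\ \sum_{l>j_1}\Pi^{0,l}_{\gamma,\delta}+\sum_{l\in I_b\cup\{l>N^{d+}\}}\Gamma^l_{\gamma,\delta},
\]
whose left side exceeds $L$ by the extra term $\Gamma^{N}_{\alpha,\beta}>\Gamma^{N}_{\gamma,\delta}$. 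When $I_b$ already covers $\{N>l>N^{d+}\}$, the right side exceeds $R$ by at most $\Gamma^{N}_{\gamma,\delta}$; hence $R-\overline{R}>L-\overline{L}$ and (\ref{C+}) with the same $(K,\mathbf{j},\mathbf{m})$ does not close the gap.

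The missing idea is that in this sub-case (\ref{C+}) for $\mathcal{S}$ must be invoked with a \emph{different}, enlarged choice of interval data: the paper appends the pair $(j_{K+1},m_{K+1})=(\lL-1,N)$, yielding $\widehat{I}_a=I_a\cup\{N\}$ and $\widehat{I}_b=I_b\cup\{N+\gamma-\alpha\geq l\geq\lL\}$, and applies (\ref{C+}) to $\widehat{I}_a,\widehat{I}_b$ to absorb the extra $\Gamma^{N}_{\alpha,\beta}$. This is precisely the content of Remark \ref{key}: the theorem's conditions are formulated over unions of $K$ intervals, rather than a single interval, so that this inductive step can enlarge the union. A purely interval-preserving termwise comparison between the $\mathcal{S}$ and $\overline{\mathcal{S}}$ conditions, as you propose, cannot by itself establish the claim.
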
 
\begin{proof}.
We prove that for all $K$, $\mathbf{h}$, $\mathbf{j}, \mathbf{m}$, $I_a$, $I_b$, $I_c$, $I_d$ in Theorem \ref{cns},
\begin{align}
\sum_{k > \delta-\beta+j_{1}}\oPi_{\alpha, \beta}^{0,k}+\sum_{k \in I_{a}}\oGamma_{\alpha, \beta}^{k}\leq &
\sum_{l>j_{1}}\oPi_{\gamma, \delta}^{0,l}+\sum_{l \in I_{b}}\oGamma_{\gamma, \delta}^{l}
\label{C+bar}\\
\sum_{k>h_1}\oPi_{\alpha, \beta}^{-k,0}+\sum_{k \in I_{d}}\oGamma_{\alpha, \beta}^{k}\geq &
\sum_{l> \gamma-\alpha+h_1}\oPi_{\gamma, \delta}^{-l,0}+\sum_{l\in I_{c}}\oGamma_{\gamma, \delta}^{l}.
\label{C-bar}
\end{align}
We prove $(\ref{C+bar})$. Since, by symmetry, the proof of $(\ref{C-bar})$ is similar, we skip it. 
\begin{remark}
Let $A=\{a\in X: a\leq K, j_{a}\geq \lL\}$, then for each $k>\delta-\beta+j_i>N$ such that $j_i\geq j_a$ we have $\overline{\Gamma}^{k}_{\alpha,\beta}=0$ by Definitions of $N$ and \ref{newsystem}. Therefore $\I_{\{m_{i}\geq k>\delta-\beta+j_{i}\}}\overline{\Gamma}^{k}_{\alpha,\beta}=0$. Let
$$
K_A=\left\{\begin{array}{ll}
\min A & \text{ if }A\neq \emptyset\\
K+1 & \text{ otherwise.}
\end{array}
\right.
$$
\begin{align}
I^{K_A}_{a}=&\bigcup_{i=1}^{K_A-1}\{m_{i}\geq k>\delta-\beta+j_{i}\},& I^{K_A}_{b}=&\bigcup_{i=1}^{K_A-1}\{\gamma-\alpha+m_{i}\geq l>j_{i}\}
\label{i}
\end{align}
then condition
\begin{align*}
\sum_{k > \delta-\beta+j_{1}}\oPi_{\alpha, \beta}^{0,k}+\sum_{k \in I^{K_A}_{a}}\oGamma_{\alpha, \beta}^{k}\leq \sum_{l>j_{1}}\oPi_{\gamma, \delta}^{0,l}+\sum_{l \in I^{K_A}_{b}}\oGamma_{\gamma, \delta}^{l}
\end{align*}
implies (\ref{C+bar}) and we can suppose without loss of generality 
\begin{equation}
\lL>j_{K}.
\label{ellekappa}
\end{equation}
If $\gamma$ is $N$-bad a similar remark involving $\lU$ and variables $N^{d-}$, $N^{D}$, $N^{Dd}$ and $N^{dD}$ holds by symmetry.
\label{Iset} 
\end{remark}
If $\beta$ is $N$-good then Condition (\ref{C+bar}) is trivially satisfied. We suppose that both $\beta$ and $\gamma$ are $N$-bad. If $\gamma$ is $N$-good the proof is similar but easier, then we skip it. Hence we suppose $\lL>0$ and $\lU>0$. By Proposition $\ref{NLU}$, $\oPi^{0,N}_{\alpha,\beta}=0$ and by Definition $\ref{newsystem}$, $\oPi^{0,l}_{\alpha,\beta}=\Pi^{0,l}_{\alpha,\beta}$ for each $l<N$. Therefore
\begin{equation}
\sum_{k > \delta-\beta+j_{1}}\oPi_{\alpha, \beta}^{0,k}p=\sum_{k > \delta-\beta+j_{1}}\Pi_{\alpha, \beta}^{0,k}p-\Pi_{\alpha, \beta}^{0,N}p
\label{lhs1}
\end{equation}
By Proposition \ref{NLU}, $\oGamma^{N}_{\alpha,\beta}=0$. Moreover $H^{k,k,-N,0}_{\alpha,\beta,\gamma,\delta}=0$ for each $k> \delta-\beta+j_{i}\geq \delta-\beta$ by (\ref{H32b}). Then by Definition \ref{newsystem}, (\ref{HJ}) and $(\ref{ellekappa})$ we have
\begin{equation}
\sum_{k\in I_{a}}\oGamma^{k}_{\alpha,\beta}p=\sum_{k \in I_{a}\setminus \{N\}}\big(\Gamma^{k}_{\alpha,\beta}p-H^{k,k,N,N}_{\alpha,\beta,\gamma,\delta}\big)
\label{tempgammaleft}
\end{equation}
where $I_{a}\setminus \{N\}$ is the shorthand for $I_{a}\I_{\{N\notin I_{a}\}}+(I_{a}\setminus \{N\})\I_{\{N\in I_{a}\}}$.\\

The right hand side of $(\ref{C+bar})$ is given by Definition $\ref{newsystem}$, (\ref{HJ}) and (\ref{HB1}). Notice that $\oGamma^{N}_{\gamma,\delta}=0$ by Proposition $\ref{NLU}$ since $\gamma$ is $N$-bad; moreover $H^{0,N,l,l}_{\alpha,\beta,\gamma,\delta}=0$ if $l>\gamma-\alpha$ by (\ref{0Nll>}):
\begin{align}
\sum_{l>j_{1}}\oPi_{\gamma, \delta}^{0,l}p+\sum_{l\in I_{b}}\oGamma_{\gamma, \delta}^{l}p=&\sum_{l>j_{1}}\Big(\Pi^{0,l}_{\gamma,\delta}p-H^{0,N,0,l}_{\alpha,\beta,\gamma,\delta}-H^{N,N,0,l}_{\alpha,\beta,\gamma,\delta}\Big)\nonumber \\
&+\sum_{l\in I_{b}\setminus \{N\}}\Big(\Gamma_{\gamma,\delta}^{l}p-H^{N,N,l,l}_{\alpha,\beta,\gamma,\delta}-H^{0,N,l,l}_{\alpha,\beta,\gamma,\delta}\Big).
\label{a57}
\end{align}

By Remark \ref{newthird} either $N^{B}\geq \lL$ or $N^{d+}\geq \lL$. We detail the case $N^{d+}\geq \lL$, which contains all the technical difficulties of the proof. The other proof (for $N^{d+}=\lL-1$) is similar.\\  

Since $N^{d+}\geq \lL$ then $H^{N,N,0,l}_{\alpha,\beta,\gamma,\delta}=0$ for each $l$ by (\ref{a27}). Moreover $H^{0,N,l,l}_{\alpha,\beta,\gamma,\delta}=0$ for each $l>\gamma-\alpha$ by (\ref{H31b}). Since $\lU>0$ and $\{\gamma-\alpha\geq l >j_{1}\}\subseteq I_{b}$ then $\displaystyle \sum_{\{l\in I_{b}\}\cap \{l\neq N\}}H^{0,N,l,l}_{\alpha,\beta,\gamma,\delta}=\I_{\{N^B=\lL-1\}}\sum_{\gamma-\alpha \geq l>j_{1}}H^{0,N,l,l}_{\alpha,\beta,\gamma,\delta}$. Moreover using Tables $\ref{tabNB1}$ and $\ref{tabNB2}$
\begin{align}
\sum_{l>j_{1}}\big(\Pi^{0,l}_{\gamma,\delta}p-H^{0,N,0,l}_{\alpha,\beta,\gamma,\delta}\big)=& \sum_{N^{B}\geq l>j_{1}}\big(\Pi^{0,l}_{\gamma,\delta}p-H^{0,N,0,l}_{\alpha,\beta,\gamma,\delta}\big).
\label{a40}
\end{align}
The first term on the right hand side (\ref{a57}) that we consider is
\begin{equation}
\sum_{N^B\geq l>j_{1}}(\Pi^{0,l}_{\gamma,\delta}p-H^{0,N,0,l}_{\alpha,\beta,\gamma,\delta})-\I_{\{N^{B}=\lL-1\}}\sum_{\gamma-\alpha \geq l>j_{1}}H^{0,N,l,l}_{\alpha,\beta,\gamma,\delta}.
\label{term}
\end{equation}
$\bullet$ If $N^B\geq \lL$, since $\lL>j_K\geq j_1$ by $(\ref{ellekappa})$, we have $N^B>j_1$:  by using Table $\ref{tabNB1}$ and $(\ref{PiN})$, (\ref{term}) is equal to
\begin{align}
\sum_{N^B\geq l>j_{1}}\Pi^{0,l}_{\gamma,\delta}p-B^{N,N^{B}+1}_{\alpha,\beta}=\sum_{l>j_{1}}\Pi^{0,l}_{\gamma,\delta}p-\Pi^{0,N}_{\alpha,\beta}p.
\label{rhsNB}
\end{align}
$\bullet$ If $N^B= \lL-1$, since in proof of Proposition \ref{imp} we obtained that $N^{Bd}\geq \lL$, again by $(\ref{ellekappa})$, we have $N^{Bd}>j_1$: by Table \ref{tabN3b}, (\ref{temp3}) and (\ref{remainder2}), (\ref{term}) is equal to 
\begin{align}
\sum_{\lL-1\geq l>j_{1}}\Pi^{0,l}_{\gamma,\delta}p-\sum_{\gamma-\alpha \geq l>N^{Bd}}H^{0,N,l,l}_{\alpha,\beta,\gamma,\delta}-\mathcal{B}^{N,N^{Bd}+1}_{\alpha,\beta}=&\sum_{l>\lL-1}\Pi^{0,l}_{\gamma,\delta}p-B^{N,\lL}_{\alpha,\beta}\nonumber\\
=&\sum_{l>j_{1}}\Pi^{0,l}_{\gamma,\delta}p-\Pi^{0,N}_{\alpha,\beta}p.
\label{rhsNBd}
\end{align}
that is (\ref{term}) has the same value in both cases. The second sum on the right hand side of (\ref{a57}) is
\begin{align}
\sum_{l\in I_{b}\setminus \{N\}}\big(\Gamma_{\gamma,\delta}^{l}p-H^{N,N,l,l}_{\alpha,\beta,\gamma,\delta}\big)=&\sum_{l\in I_{b}\setminus \{N\}}\Gamma_{\gamma,\delta}^{l}p-\sum_{l\in I_{b}\setminus \{N\}}\Gamma_{\gamma,\delta}^{l}p\I_{\{l>N^{d+}\}}\nonumber \\
&-\sum_{l\in I_{b}\setminus \{N\}}\big(\Gamma^{N}_{\alpha,\beta}p-\sum_{l'>N^{d+}}\Gamma_{\gamma,\delta}^{l'}p\big)\I_{\{l=N^{d+}\}}\nonumber \\
=&\sum_{l\in I_{b}\setminus \{N\}}\Gamma_{\gamma,\delta}^{l}p-\sum_{l\in I_{b}\setminus \{N\}}\Gamma_{\gamma,\delta}^{l}p\I_{\{l>N^{d+}\}}\nonumber \\
&-\big(\Gamma^{N}_{\alpha,\beta}p-\sum_{l>N^{d+}}\Gamma_{\gamma,\delta}^{l}p\big)\I_{\{N^{d+}\in I_b\setminus \{N\}\}}.
\label{a40bis}
\end{align}
Therefore using (\ref{a40}), (\ref{rhsNB}), (\ref{rhsNBd}) and (\ref{a40bis})  
\begin{align}
\sum_{l>j_{1}}\oPi_{\gamma, \delta}^{0,l}p+\sum_{l\in I_{b}}\oGamma_{\gamma, \delta}^{l}p=&\sum_{l>j_{1}}\Pi^{0,l}_{\gamma,\delta}p-\Pi^{0,N}_{\alpha,\beta}p+\sum_{l\in I_{b}\setminus \{N\}}\Gamma_{\gamma,\delta}^{l}p-\sum_{l\in I_{b}\setminus \{N\}}\Gamma_{\gamma,\delta}^{l}p\I_{\{l>N^{d+}\}}\nonumber\\
&-\big(\Gamma^{N}_{\alpha,\beta}p-\sum_{l>N^{d+}}\Gamma_{\gamma,\delta}^{l}p\big)\I_{\{N^{d+}\in I_b\setminus \{N\}\}}.
\label{temprightside}
\end{align}
We use (\ref{lhs1}), (\ref{tempgammaleft}), (\ref{temprightside}) and Condition (\ref{C+}) to check that Condition (\ref{C+bar}) is satisfied.\\

\noindent \textbf{Case A:} Suppose $N^{d-}=N$. In this case $\Gamma^{N}_{\alpha,\beta}>\Gamma^{N}_{\gamma,\delta}$ (by $(\ref{firstdiffusion})$) and $H^{k,k,N,N}_{\alpha,\beta,\gamma,\delta}=0$ for each $k<N$. Therefore by $(\ref{tempgammaleft})$
\begin{equation}
\sum_{k \in I_{a}}\oGamma_{\alpha, \beta}^{k}=\sum_{k \in I_{a}\setminus \{N\}}\Gamma_{\alpha, \beta}^{k}.
\label{lhs2}
\end{equation}
$\bullet$ Suppose $N^{d+}\notin I_{b}$. Since $N^{d+}\geq \lL>j_{K}$ by $(\ref{ellekappa})$, we must have $N^{d+}>\gamma-\alpha+m_K$, hence $I_{b}\setminus \{\{N\}\cup \{N\geq l>N^{d+}\}\}=I_{b}$. It implies that $\sum_{l \in I_{b}\setminus \{N\}}\Gamma^{l}_{\gamma,\delta}\I_{\{l>N^{d+}\}}=0$, and by (\ref{lhs1}), (\ref{tempgammaleft}) and (\ref{temprightside}) Condition $(\ref{C+bar})$ becomes
\begin{align*}
&\sum_{k > \delta-\beta+j_{1}}\Pi_{\alpha, \beta}^{0,k}+\sum_{k \in I_{a}\setminus \{N\}}\Gamma_{\alpha, \beta}^{k}\leq \sum_{l>j_{1}}\Pi_{\gamma, \delta}^{0,l}+\sum_{l \in I_{b}}\Gamma^{l}_{\gamma,\delta}
\end{align*}
which holds by Condition $(\ref{C+})$.\\
$\bullet$ If $N^{d+}\in I_{b}\setminus \{N\}$, then 
\begin{equation}
\gamma-\alpha+m_{K}\geq N^{d+}\geq \lL>j_{K},
\label{a41}
\end{equation}
hence (see Table $\ref{tabNd1}$) by (\ref{lhs1}), (\ref{tempgammaleft}) and (\ref{temprightside}) Condition $(\ref{C+bar})$ is 
\begin{align}
\sum_{k > \delta-\beta+j_{1}}\Pi_{\alpha, \beta}^{0,k}+\sum_{k \in I_{a}\setminus \{N\}}\Gamma_{\alpha, \beta}^{k}\leq & \sum_{l>j_{1}}\Pi_{\gamma, \delta}^{0,l}+\sum_{l\in I_{b}\setminus \{N\}}\Gamma_{\gamma,\delta}^{l}-\sum_{l\in I_{b}\setminus \{N\}}\Gamma_{\gamma,\delta}^{l}\I_{\{l>N^{d+}\}}\nonumber \\
&-\Gamma^{N}_{\alpha,\beta}+\sum_{l>N^{d+}}\Gamma_{\gamma,\delta}^{l}\nonumber\\
=&\sum_{l>j_{1}}\Pi_{\gamma, \delta}^{0,l}+\sum_{l \in \left\{I_{b}\setminus \{N\}\right\}\cup \{l>N^{d+}\}}\Gamma^{l}_{\gamma,\delta}-\Gamma^{N}_{\alpha,\beta}.
\label{tmptmp}
\end{align}
Notice that
$$
\left\{I_{b}\setminus \{N\}\right\}\cup \{l>N^{d+}\}=I_{b}\cup \{l>N^{d+}\}.
$$
$\bullet$ If $N\in I_{a}$, $(\ref{tmptmp})$ becomes
$$
\sum_{k > \delta-\beta+j_{1}}\Pi_{\alpha, \beta}^{0,k}+\sum_{k \in I_{a}}\Gamma_{\alpha, \beta}^{k}\leq \sum_{l>j_{1}}\Pi_{\gamma, \delta}^{0,l}+\sum_{l \in I_{b}\cup \{l>N^{d+}\}}\Gamma^{l}_{\gamma,\delta}
$$
which holds by $(\ref{C+})$.\\
$\bullet$ If $N\notin I_{a}$, (\ref{tmptmp}) writes 
\begin{align}
&\sum_{k > \delta-\beta+j_{1}}\Pi_{\alpha, \beta}^{0,k}+\sum_{k \in I_{a}\cup \{N\}}\Gamma_{\alpha, \beta}^{k}\leq \sum_{l>j_{1}}\Pi_{\gamma, \delta}^{0,l}+\sum_{l \in I_{b} \cup \{l>N^{d+}\}}\Gamma^{l}_{\gamma,\delta}
\label{tempgamma4}
\end{align}
Denote by 
\begin{align*}
\widehat{I}_{a}=&I_{a}\cup \{N \geq l\geq \delta-\beta+(N-\delta+\beta)\}=I_{a}\cup \{N \geq l\geq N\}\\
\widehat{I}_{b}=&I_{b}\cup \{N+\gamma-\alpha\geq l\geq N-\delta+\beta\}
\end{align*}
then by Condition $(\ref{C+})$ applied to $\widehat{I}_{a}$, $\widehat{I}_{b}$ 
\begin{align}
\sum_{k > \delta-\beta+j_{1}}\Pi_{\alpha, \beta}^{0,k}+\sum_{k \in \widehat{I}_{a}}\Gamma_{\alpha, \beta}^{k}\leq \sum_{l>j_{1}}\Pi_{\gamma, \delta}^{0,l}+\sum_{l \in \widehat{I}_{b}}\Gamma^{l}_{\gamma,\delta}.
\label{tmptmp2}
\end{align}
By $(\ref{a41})$, $\widehat{I}_{b}=I_{b}\cup \{N+\gamma-\alpha\geq l> N^{d+}\}$ and $(\ref{tmptmp2})$ implies $(\ref{tempgamma4})$.\\
 
\begin{remark}
This is the key passage where a Definition of $I_{a},\ldots I_{d}$ as a single set instead of a union of several sets does not work.
\label{key}
\end{remark}

\noindent \textbf{Case B:} Suppose $N=N^{d+}$. In this case $H^{N,N,l,l}_{\alpha,\beta,\gamma,\delta}=0$ for each $l<N$, the left hand side of (\ref{a40bis}) is equal to $\sum_{l\in I_{b}\setminus \{N\}}\Gamma_{\gamma,\delta}^{l}p$, Formula (\ref{tempgammaleft}) still holds but terms $H^{k,k,N,N}_{\alpha,\beta,\gamma,\delta}$ are not null. One works as in Case A, when $N^{d-}=N$, by using the symmetric construction of $H^{k,k,N,N}_{\alpha,\beta,\gamma,\delta}$ and by checking the condition in different cases.
\end{proof}

\begin{proposizione}
If $\acca:=\mathcal{H}(\esse)$ is increasing, then $\mathcal{H}=\mathcal{H}(\mathcal{S})$ is increasing.
\label{HHbar}
\end{proposizione}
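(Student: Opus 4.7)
The strategy is to decompose $\mathcal{H}=\mathcal{H}(\mathcal{S})$ into two parts and show each is increasing. Let $\mathcal{H}_{N}$ collect all the coupling rates produced by the $N$-round of Definition \ref{couplingHgen} (that is, all rates $H^{\cdot,\cdot,\cdot,\cdot}_{\alpha,\beta,\gamma,\delta}$ whose superscripts contain $N$ in at least one slot, together with the associated uncoupled terms built from $\mathcal{J}^{N,1}$, $\mathcal{B}^{N,1}$, $\mathcal{D}^{1,N}$ and the analogous residuals), and let $\mathcal{H}_{<N}$ collect the remaining coupling rates. By construction $\mathcal{H}=\mathcal{H}_{N}+\mathcal{H}_{<N}$.

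\textbf{Step 1: $\mathcal{H}_{N}$ is increasing.} The transitions coupled in Steps 1 and 2 of Section \ref{coupling} are truncated at the thresholds $\lL$ and $\lU$ precisely so that the resulting joint move preserves the partial order on $x$ and $y$. The mixing between jumps, births and deaths performed in Step 3 could in principle produce order-breaking transitions, but Proposition \ref{imp} shows that, under Conditions (\ref{C+})--(\ref{C-}), all the suspect rates $H^{N,N,l,l}$, $H^{0,N,0,l}$, $H^{0,N,l,l}$, $H^{N,N,0,l}$ with $l<\lL$ (and their symmetric counterparts for $k<\lU$) vanish. Hence every positive transition in $\mathcal{H}_{N}$ keeps $\xi_{t}\leq\eta_{t}$.

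\textbf{Step 2: $\mathcal{H}_{<N}$ coincides with $\acca=\mathcal{H}(\esse)$.} By Definition \ref{newsystem}, the rates of $\esse$ are exactly the residuals $J^{k,N}_{\alpha,\beta}$, $J^{N,l}_{\gamma,\delta}$, $\mathcal{J}^{N,1}_{\alpha,\beta}$, $B^{N,l}_{\gamma,\delta}$, $\mathcal{B}^{N,1}_{\alpha,\beta}$, $D^{k,N}_{\alpha,\beta}$, $\mathcal{D}^{1,N}_{\gamma,\delta}$ obtained from $\mathcal{S}$ after subtracting the marginals of $\mathcal{H}_{N}$, together with the unchanged sub-$N$ birth and death rates. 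Since the formulas of Definition \ref{couplingHgen} are a downwards recursion that at step $(k,l)$ only sees $\Gamma, \Pi$ through the remainders already built, applying that recursion to $\esse$ (starting from the largest attractiveness index $\oN\leq N-1$, cf.\ Proposition \ref{NLU}) reproduces term by term the coupling rates of $\mathcal{H}_{<N}$. Proposition \ref{attrdue} guarantees that $\esse$ still satisfies (\ref{C+})--(\ref{C-}), so the induction hypothesis applies and $\acca$ is increasing; therefore so is $\mathcal{H}_{<N}$.

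\textbf{Step 3: conclusion.} Superposing two increasing couplings yields an increasing coupling, since every individual joint transition preserves the order; moreover the marginals of $\mathcal{H}_{N}+\mathcal{H}_{<N}$ sum, by the very definition of $\esse$, to the rates of $\widetilde{\mathcal{S}}_{(x,y)}$ and $\mathcal{S}_{(x,y)}$. Hence $\mathcal{H}$ is an increasing coupling of the pair, which completes the induction.

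The main obstacle is the identification in Step 2: one has to verify, slot by slot, that each recursive formula of Definition \ref{couplingHgen} applied to $\esse$ feeds on the residuals of $\mathcal{S}$ in exactly the same order and with exactly the same cross-terms as the sub-$N$ part of the construction applied to $\mathcal{S}$. This is a careful but mechanical check of the Step 1, Step 2 and Step 3a/3b recursions, using that $H^{N,N,\cdot,\cdot}$, $H^{0,N,\cdot,\cdot}$, $H^{-N,0,\cdot,\cdot}$ (and their symmetric counterparts) no longer appear as ``upper'' indices beyond the $N$-round.
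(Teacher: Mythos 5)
Your decomposition mirrors the paper's argument: Proposition \ref{imp} handles the order-breaking terms involving $N$ particles, while the remaining terms are identified with those of $\overline{\mathcal{H}}$, which is increasing by hypothesis; this is indeed the route the paper takes, with the identification carried out explicitly by comparing the recursive formulas of Definition \ref{couplingHgen} for $\mathcal{H}$ and $\overline{\mathcal{H}}$ and then checking the initial rates.

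One imprecision to flag in your Step 2: the equality $\mathcal{H}_{<N}=\overline{\mathcal{H}}$ is not exact as stated, because Definition \ref{newsystem} can leave $\oPi^{0,N}_{\gamma,\delta}=B^{N,N}_{\gamma,\delta}>0$ (and symmetrically $\oPi^{-N,0}_{\alpha,\beta}>0$), so $\overline{\mathcal{H}}$ contains terms such as $\oH^{0,N-1,0,N}_{\alpha,\beta,\gamma,\delta}$ which do carry $N$ in a slot and therefore belong to your $\mathcal{H}_{N}$ rather than $\mathcal{H}_{<N}$. The paper addresses exactly this boundary case by verifying $\oH^{0,N-1,0,N}_{\alpha,\beta,\gamma,\delta}=H^{0,N-1,0,N}_{\alpha,\beta,\gamma,\delta}$; the cleaner statement is that $\mathcal{H}$ and $\overline{\mathcal{H}}$ differ only on coupling rates feeding on a problematic $N$-rate ($\Gamma^{N}_{\alpha,\beta}$, $\Pi^{0,N}_{\alpha,\beta}$, $\Gamma^{N}_{\gamma,\delta}$, $\Pi^{-N,0}_{\gamma,\delta}$ when $\beta$ or $\gamma$ is $N$-bad), the latter being null for $\overline{\mathcal{H}}$ by Proposition \ref{NLU}. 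Your conclusion still holds, since the extra $\overline{\mathcal{H}}$-terms coincide with terms of $\mathcal{H}_{N}$ and preserve the order (they satisfy $\beta+k\leq\delta+N$ by construction), but the split by problematic $N$-rate rather than by slot index avoids the mismatch.
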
  
\begin{proof}.
We show that all coupling rates of $\mathcal{H}$ breaking the partial order between configurations are null. By Proposition \ref{imp}, the ones involving $N$ particles are null and if both $\beta$ and $\gamma$ are $(N-1)$-good then $\mathcal{H}$ is increasing.
Therefore we suppose that either $\beta$ or $\gamma$ is $(N-1)$-bad.\\
We prove that $\mathcal{H}$ and $\overline{\mathcal{H}}$ differ only on rates where a change of $N$ particles causes an attractiveness problem. By Proposition \ref{NLU} such terms are null for $\overline{\mathcal{H}}$.\\

\noindent Step 1: by Definition \ref{couplingHgen} (\ref{HJ}) on $\esse$ and Definition $\ref{newsystem}$, if $N>k\geq k-\delta+\beta$, we get 
\begin{align}
\oH^{k,k,l,l}_{\alpha,\beta,\gamma,\delta}=&[\oGamma^{k}_{\alpha,\beta}p-\sum_{  l'>l}\big(\oH^{k,k,l',l'}_{\alpha,\beta,\gamma,\delta}+\oH^{k,k,-l',0}_{\alpha,\beta,\gamma,\delta}\big)]
 \wedge  [\oGamma^{l}_{\gamma,\delta}p-\sum_{  k'>k}\big(\oH^{k',k',l,l}_{\alpha,\beta,\gamma,\delta}+\oH^{0,k',l,l}_{\alpha,\beta,\gamma,\delta})]\nonumber\\
=&[J^{k,N}_{\alpha,\beta}-\sum_{ l'>l}(\oH^{k,k,l',l'}_{\alpha,\beta,\gamma,\delta}+\oH^{k,k,-l',0}_{\alpha,\beta,\gamma,\delta})]\wedge [J^{N,l}_{\gamma,\delta}-\sum_{  k'>k}\big(\oH^{k',k',l,l}_{\alpha,\beta,\gamma,\delta}+\oH^{0,k',l,l}_{\alpha,\beta,\gamma,\delta})]
\label{tmpbarl}
\end{align}
Note that since $J^{k,N}_{\alpha,\beta}=\Gamma^{k}_{\alpha,\beta}p-H^{k,k,N,N}_{\alpha,\beta,\gamma,\delta}+H^{k,k,-N,0}_{\alpha,\beta,\gamma,\delta}$ and $J^{N,l}_{\gamma,\delta}=\Pi^{0,l}_{\gamma,\delta}p-H^{N,N,l,l}_{\alpha,\beta,\gamma,\delta}-H^{0,N,l,l}_{\alpha,\beta,\gamma,\delta}$, by definition (\ref{HJ}) when $0<k,l\leq N-1$, 
\begin{align}
H^{k,k,l,l}_{\alpha,\beta,\gamma,\delta}=&[J^{k,N}_{\alpha,\beta}-\sum_{ l'>l}(H^{k,k,l',l'}_{\alpha,\beta,\gamma,\delta}+H^{k,k,-l',0}_{\alpha,\beta,\gamma,\delta})]\wedge [J^{N,l}_{\gamma,\delta}-\sum_{  k'>k}\big(H^{k',k',l,l}_{\alpha,\beta,\gamma,\delta}+H^{0,k',l,l}_{\alpha,\beta,\gamma,\delta})].
\label{tmpbarlH}
\end{align}
By (\ref{tmpbarl}) and (\ref{tmpbarlH}), $H^{k,k,l,l}_{\alpha,\beta,\gamma,\delta}$ and $\oH^{k,k,l,l}_{\alpha,\beta,\gamma,\delta}$ have the same recursive definition when $0<k\leq N-1$, $0<l\leq N-1$.\\ 

\noindent Step 2: If $\beta$ is $k$-bad (otherwise $\oH^{0,k,0,l}_{\alpha,\beta,\gamma,\delta}=H^{0,k,0,l}_{\alpha,\beta,\gamma,\delta}=0$ for each $l>0$), by Definition \ref{couplingHgen} (\ref{HB1}) and Definition \ref{newsystem} 
\begin{align}
\oH^{0,k,0,l}_{\alpha,\beta,\gamma,\delta}=&[\oPi^{0,k}_{\alpha,\beta}p-\sum_{l'>l}\oH^{0,k,0,l'}_{\alpha,\beta,\gamma,\delta}]\wedge [\oPi^{0,l}_{\gamma,\delta}p-\sum_{ k'>k}(\oH^{0,k',0,l}_{\alpha,\beta,\gamma,\delta}+\oH^{k',k',0,l}_{\alpha,\beta,\gamma,\delta})].
\nonumber\\
=&[\Pi^{0,k}_{\alpha,\beta}p-\sum_{l'>l}\oH^{0,k,0,l'}_{\alpha,\beta,\gamma,\delta}]\wedge [
B^{N,l}_{\gamma,\delta}-\sum_{k'>k}(\oH^{0,k',0,l}_{\alpha,\beta,\gamma,\delta}+\oH^{k',k',0,l}_{\alpha,\beta,\gamma,\delta})].
\label{tmpbirth1}
\end{align}
As in previous case, since 
$B^{N,l}_{\gamma,\delta}=\Pi^{0,l}_{\gamma,\delta}p-H^{0,N,0,l}_{\alpha,\beta,\gamma,\delta}-H^{N,N,0,l}_{\alpha,\beta,\gamma,\delta}$, by definition (\ref{HB1}), $H^{0,k,0,l}_{\alpha,\beta,\gamma,\delta}$ and $\oH^{0,k,0,l}_{\alpha,\beta,\gamma,\delta}$ have the same recursive definition when $0<k\leq N-1$, $0<l\leq N$. Notice that in this case they coincide even if $l=N$, since we may have $\oPi^{0,N}_{\gamma,\delta}p>0$. By similar arguments and (\ref{HD1}) we prove the symmetric result with respect to death rates.\\

\noindent Step 3a: Suppose that $\beta$ is $k$-bad and $\overline{J}^{k,k-\delta+\beta}_{\alpha,\beta}>0$, that is Step $1$ was not enough to solve the attractiveness problem induced by $\overline{\Gamma}^{k}_{\alpha,\beta}p$ for $\esse$. By Definitions \ref{couplingHgen} (\ref{H31a})--(\ref{H31b}) and \ref{newsystem}   
\begin{align}
\oH^{k,k,0,l}_{\alpha,\beta,\gamma,\delta}=&[\oGamma^{k}_{\alpha,\beta}p-\sum_{l'\geq k-\delta+\beta}\oH^{k,k,l',l'}_{\alpha,\beta,\gamma,\delta}-\sum_{ l'>l}\oH^{k,k,0,l'}_{\alpha,\beta,\gamma,\delta}] \wedge [\oPi^{0,l}_{\gamma,\delta}p-\sum_{k'\geq k}\oH^{0,k',0,l}_{\alpha,\beta,\gamma,\delta}-\sum_{ k'> k}\oH^{k',k',0,l}_{\alpha,\beta,\gamma,\delta})]\nonumber \\
=&[J^{k,N}_{\alpha,\beta}-\sum_{l'\geq k-\delta+\beta}\oH^{k,k,l',l'}_{\alpha,\beta,\gamma,\delta}-\sum_{ l'>l}\oH^{k,k,0,l'}_{\alpha,\beta,\gamma,\delta}]\wedge [B^{N,l}_{\gamma,\delta}-\sum_{k'\geq k}\oH^{0,k',0,l}_{\alpha,\beta,\gamma,\delta}-\sum_{ k'> k}\oH^{k',k',0,l}_{\alpha,\beta,\gamma,\delta})].
\label{tmpthird1}
\end{align}
Since $J^{k,N}_{\alpha,\beta}=\Gamma^{k}_{\alpha,\beta}p-H^{k,k,N,N}_{\alpha,\beta,\gamma,\delta}-H^{k,k,0,N}_{\alpha,\beta,\gamma,\delta}$ and $B^{N,l}_{\gamma,\delta}=\Pi^{0,l}_{\gamma,\delta}p-H^{0,N,0,l}_{\alpha,\beta,\gamma,\delta}-H^{N,N,0,l}_{\alpha,\beta,\gamma,\delta}$, again $H^{k,k,0,l}_{\alpha,\beta,\gamma,\delta}$ and $\oH^{k,k,0,l}_{\alpha,\beta,\gamma,\delta}$ have the same recursive definition when $0<k\leq N-1$, $0<l\leq N$. 
The equality of recursive formulas corresponding to Step 3b is proved in a similar way.\\

Since all terms are defined by the same downwards induction formula, we just need to check that they coincide for the initial coupling rates. The one involving $\Gamma^{N-1}_{\alpha,\beta}p$ and $\Gamma^{N-1}_{\gamma,\delta}p$ is given by (\ref{tmpbarl}) with $l=N-1$, that is  
\begin{align*}
\oH^{N-1,N-1,N-1,N-1}_{\alpha,\beta,\gamma,\delta}=&J^{N-1,N}_{\alpha,\beta}\wedge J^{N,N-1}_{\gamma,\delta}=[\Gamma^{N-1}_{\alpha,\beta}p-H^{N-1,N-1,N,N}_{\alpha,\beta,\gamma,\delta}+H^{N-1,N-1,-N,0}_{\alpha,\beta,\gamma,\delta}]\\
&\wedge [\Gamma^{N-1}_{\gamma,\delta}p-H^{N,N,N-1,N-1}_{\alpha,\beta,\gamma,\delta}-H^{0,N,N-1,N-1}_{\alpha,\beta,\gamma,\delta}]=H^{N-1,N-1,N-1,N-1}_{\alpha,\beta,\gamma,\delta}
\end{align*}
and we are done. The one involving $\Pi^{0,N-1}_{\alpha,\beta}p$ is given by (\ref{tmpbirth1}) with $l=N$, that is
\begin{align*}
\oH^{0,N-1,0,N}_{\alpha,\beta,\gamma,\delta}=&\Pi^{0,N-1}_{\alpha,\beta}p\wedge B^{N,N}_{\gamma,\delta}=
\Pi^{0,N-1}_{\alpha,\beta}p\wedge [\Pi^{0,N}_{\gamma,\delta}p-H^{0,N,0,N}_{\alpha,\beta,\gamma,\delta}-H^{N,N,0,N}_{\alpha,\beta,\gamma,\delta}]\\
=&H^{0,N-1,0,N}_{\alpha,\beta,\gamma,\delta}.
\end{align*}
We prove that $H^{-N,0,-(N-1),0}_{\alpha,\beta,\gamma,\delta}=H^{-N,0,-(N-1),0}_{\alpha,\beta,\gamma,\delta}$ for death rates by symmetric arguments.\\
Therefore $\mathcal{H}$ and $\overline{\mathcal{H}}$ are identical for all coupling rates involving a change of less than $N$ particles which cause an attractiveness problem: the conclusion includes uncoupled rates. The claim follows since $\overline{\mathcal{H}}$ is increasing and by Proposition \ref{imp}.
\end{proof}

\subsubsection{Proof of Proposition \ref{suf}} 
We do an induction in two steps. First of all we suppose that $\beta$ is $N$-bad and $\gamma$ is $N$-good, that is $\beta+N>\delta$ and $\gamma-N\geq \alpha$. We skip the similar symmetric case.\\
Let $\mathcal{S}^*_N:=\mathcal{S}$ and suppose that Proposition \ref{suf} holds for each system $\mathcal{S}^*$ such that $n(\mathcal{S}^*)\leq N-1$. Notice that since $\gamma$ is $N$-good, then $\gamma$ is $n(\mathcal{S}^*)$-good for each $\mathcal{S}^*$ and, by Remark \ref{rem_enne}, definition of $n(\mathcal{S}^*)$ involves for each $\mathcal{S}^*$ only rates that cause a lower attractiveness problem.\\
Remember that Definition \ref{newsystem} of $\esse$ depends on the original system, that is $\esse=\esse(\mathcal{S})$. We define $\mathcal{S}^*_{j}=\esse(\mathcal{S}^*_{j+1})$ and $n_{j}=n(\mathcal{S}^*_{j})$ for $j\in \mathbb{N}, j\leq N-1$.\\
If $\beta$ is $(N-1)$-bad, by Proposition $\ref{NLU}$, $n_{N-1}\leq N-1$, and $\mathcal{S}^*_{N-1}$ satisfies  Conditions $(\ref{C+}$)--($\ref{C-})$ by Proposition $\ref{imp}$. We define a coupling $\mathcal{H}_{N-1}$ for $\mathcal{S}^*_{N-1}$ as in Definition $\ref{couplingHgen}$, and by induction hypothesis it is increasing. By Proposition \ref{HHbar}, if $\mathcal{H}_{N-1}$ is increasing then so is $\mathcal{H}_N$.\\
We have to check the induction basis: we proceed downwards with definitions of the new systems $\mathcal{S}^*_j$ until $\beta+j\leq\delta$. If $\beta<\delta$, then $\mathcal{S}^*_i$ with $j=\delta-\beta$ is attractive, since $\beta+j\leq\delta$ and there are no attractiveness problems. If $\beta=\delta$, then the attractiveness of a system with $n_1=1$ under Condition (\ref{C+}) is proved in Appendix \ref{app}.\\

We proved that all systems such that there is only a lower (or only a higher) attractiveness problem are attractive under Conditions (\ref{C+})--(\ref{C-}). Now we prove that this holds also for a system where both $\beta$ and $\gamma$ are $N$-bad.\\

If $\beta+N>\delta$ and $\gamma-N<\alpha$, the definition of $n(\mathcal{S})$ is given by (\ref{enne}) but everything works in a similar way: we define $\mathcal{S}^*_{N}=\mathcal{S}$, $\mathcal{S}^*_{j}=\esse(\mathcal{S}^*_{j+1})$ and $n_{j}=n(\mathcal{S}^*_{j})$ for $j\leq N-1$.\\
If both $\beta$ and $\gamma$ are $(N-1)$-bad, by Proposition $\ref{NLU}$, $n_{N-1}\leq N-1$ and $\mathcal{S}^*_{N-1}$ satisfies  Conditions $(\ref{C+}$)--($\ref{C-})$ by Proposition $\ref{imp}$. We define a coupling $\mathcal{H}_{N-1}$ for $\mathcal{S}^*_{N-1}$ as in Definition $\ref{couplingHgen}$; by induction hypothesis it is increasing. By Proposition \ref{HHbar}, if $\mathcal{H}_{N-1}$ is increasing then so is $\mathcal{H}_N$.\\
We have to check the induction basis: we proceed downwards with the definition of a new system $\mathcal{S}^*_j$ until either $\beta$ or $\gamma$ are $j$-good, that is until $j=(\delta-\beta)\vee (\gamma-\alpha)$.\\
If $\delta-\beta\neq \gamma-\alpha$, then only one attractiveness problem is present and the claim follows from the first part of the proof.\\ 
If $j=\delta-\beta=\gamma-\alpha>0$, then $\mathcal{S}^*_j$ is attractive since there are no attractiveness problems; if $j=\delta-\beta=\gamma-\alpha=0$, then the attractiveness of a system with $n_1=1$, $\beta=\delta$ and $\gamma=\alpha$ under Conditions (\ref{C+})--(\ref{C-}) is proved in Appendix \ref{app}.\hfill $\square$
 
\subsection{Sufficient conditions on more general systems}  
In order to show the sufficient conditions of Theorem \ref{cns}, we restricted ourselves to transition rates on a given pair of sites $(x,y)$: if Conditions (\ref{C+})--(\ref{C-}) are satisfied we can construct an increasing coupling for the system $\mathcal{S}_{(x,y)}$ and the final increasing coupling is given by superposition of couplings for all pairs of sites (see Section \ref{coupling} and Proposition \ref{attpairs}). We use neither the translation invariance of $P^{\pm k}_{\eta(x)}$, nor the fact that the smaller and the larger systems share the same $p(x,y)$. Therefore we can state the result for a more general pair of systems  $\mathcal{G}=\{R^{0,\pm k}_{\eta(x),\eta(y)}(x,y),\Gamma^{k}_{\eta(x),\eta(y)}(x,y), P^{\pm k}_{\eta(x)}(x),p(x,y))\}$ and $\widetilde{\mathcal{G}}=\{\widetilde{R}^{0,\pm k}_{\eta(x),\eta(y)},$
$\widetilde{\Gamma}^{k}_{\eta(x),\eta(y)}, \widetilde{P}^{\pm k}_{\eta(x)}(x),\widetilde{p}(x,y)\}$:
\begin{corollario}
A particle system $\eta_t \sim \mathcal{G}$ is stochastically larger than $\xi_t\sim \widetilde{\mathcal{G}}$ if for each $(x,y)\in S^2$, $(\alpha,\beta)\leq (\gamma,\delta)$, $(\alpha,\beta)\in X^2$, $(\gamma,\delta)\in X^2$, with $p=p(x,y)$ and $\widetilde{p}=\widetilde{p}(x,y)$
\begin{align*}
&\big(\sum_{k > \delta-\beta+j_{1}}\widetilde{\Pi}_{\alpha,\beta}^{0,k}(x,y)+\sum_{k \in I_{a}}\widetilde{\Gamma}_{\alpha,\beta}^{k}(x,y)\big)\widetilde{p}\leq 
\big(\sum_{l>j_{1}}\Pi_{\gamma,\delta}^{0,l}(x,y)+\sum_{l \in I_{b}}\Gamma_{\gamma,\delta}^{l}(x,y)\big)p,\\
&\big(\sum_{k>h_{1}}\widetilde{\Pi}_{\alpha,\beta}^{-k,0}(x,y)+\sum_{k \in I_{d}}\widetilde{\Gamma}_{\alpha,\beta}^{k}(x,y)\big)\widetilde{p}\geq 
\big(\sum_{l > \gamma-\alpha+h_{1}}\Pi_{\gamma,\delta}^{-l,0}(x,y)+\sum_{l \in I_{c}}\Gamma_{\gamma,\delta}^{l}(x,y)\big)p,
\end{align*}
for all choices of $K\leq \widetilde{N}(\alpha,\beta)\vee N(\gamma,\delta)$, $\mathbf{k}$, $\mathbf{j}$, $\mathbf{m}$ in Theorem \ref{cns}.
\label{cor_cns}
\end{corollario}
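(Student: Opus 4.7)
The plan is to mirror the proof of Theorem \ref{cns}, exploiting the observation made just before the statement that the coupling construction uses neither translation invariance of the independent birth/death rates nor the assumption that the two systems share a common $p(x,y)$. First, for each ordered pair $(x,y)\in S^{2}$ I would define restricted generators $\mathcal{G}_{(x,y)}$ and $\widetilde{\mathcal{G}}_{(x,y)}$ in direct analogy with Definition \ref{Sxy}: retain only the $(x,y)$-summand from the double sum in (\ref{gen}), together with the fractions $P^{k}_{\eta(y)}(y)\,p(x,y)$, $P^{-k}_{\eta(x)}(x)\,p(x,y)$ (respectively with $\widetilde{p}$) of the independent rates. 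Summing these restricted generators over $y$ with $x$ fixed, and over $x$ with $y$ fixed, reconstructs the full rates of $\mathcal{G}$ and $\widetilde{\mathcal{G}}$ thanks to the bistochasticity of $p$ and $\widetilde{p}$; hence, as in Proposition \ref{attpairs}, superposing increasing couplings for each pair yields an increasing coupling between the full processes and it suffices to show each pair is attractive for $(\widetilde{\mathcal{G}},\mathcal{G})$.

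Next, I would fix a pair $(x,y)$ together with ordered configurations having values $(\alpha,\beta)$ and $(\gamma,\delta)$ on $(x,y)$, and rerun the construction of Section \ref{coupling} treating the products
\[
\widetilde{\Gamma}^{k}_{\alpha,\beta}(x,y)\,\widetilde{p}(x,y),\qquad \widetilde{\Pi}^{0,k}_{\alpha,\beta}(x,y)\,\widetilde{p}(x,y),\qquad \widetilde{\Pi}^{-k,0}_{\alpha,\beta}(x,y)\,\widetilde{p}(x,y)
\]
as the atomic lower-system rates, and the analogous products with $p(x,y)$ as the atomic upper-system rates. The key observation is that throughout Definition \ref{couplingHgen} and in every remainder $J^{k,l}$, $B^{k,l}$, $D^{k,l}$, $\mathcal{J}^{k,l}$, $\mathcal{B}^{k,l}$, $\mathcal{D}^{k,l}$ and coupling term $H^{\cdot,\cdot,\cdot,\cdot}_{\alpha,\beta,\gamma,\delta}$, the factor $p(x,y)$ appears only glued to its rate and is never factored out, so the construction is purely formal in these packaged quantities. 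Consequently, Propositions \ref{imp}, \ref{NLU}, \ref{attrdue} and \ref{HHbar} go through verbatim once Conditions (\ref{C+})--(\ref{C-}) are replaced by the two displayed inequalities of the corollary, which are exactly the pair-level inequalities invoked at each step of the induction.

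The main step requiring care is verifying that the ingredients of the downwards induction in Proposition \ref{suf} survive the replacement: one needs Hypothesis \ref{hip} to still guarantee finiteness of the largest problematic transition size $N$ (which it does, since multiplication by the bounded nonnegative constants $p(x,y)$ and $\widetilde{p}(x,y)$ preserves the supremum and summability conditions), the strict decrease $\overline{n}\leq N-1$ of Proposition \ref{NLU} to remain valid with the packaged rates, and the base case $N=1$ to go through (it too uses only the products $\Gamma^{1}_{\cdot,\cdot}p$, $\Pi^{0,1}_{\cdot,\cdot}p$, $\Pi^{-1,0}_{\cdot,\cdot}p$, so nothing new is needed). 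Once these routine checks are made, the corollary follows by assembling the pairwise increasing couplings as in Proposition \ref{attpairs}.
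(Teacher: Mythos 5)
Your proposal is correct and follows exactly the route the paper indicates: the remark preceding Corollary \ref{cor_cns} is precisely the observation that the coupling in Section \ref{coupling} manipulates only the packaged quantities (rate times $p$) and never uses translation invariance of $P^{\pm k}$ nor the coincidence of $p$ and $\widetilde{p}$, so the pair-by-pair superposition argument of Proposition \ref{attpairs} and the downwards induction of Propositions \ref{imp}, \ref{NLU}, \ref{attrdue}, \ref{HHbar} carry over once the corollary's inequalities replace $(\ref{C+})$--$(\ref{C-})$. Your treatment of bistochasticity to recover the independent rates and the check that the base case and Hypothesis \ref{hip} are unaffected complete the argument.
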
 
Such conditions are not necessary. We use Corollary \ref{cor_cns} with the comparison technique with oriented percolation in \cite{cf:Borrello} to prove survival of species in metapopulation models.    
 
\section{Proof of Theorem \ref{general_erg}}
\label{proof_appl}
Since $F(x,y)=F(0,y-x)=F(y-x,0)$, with a slight abuse of notation we write $F(y-x)$ instead of $F(x,y)$.\\
 
\noindent \begin{proof} \emph{of Proposition \ref{ucriterio}}.\\
We treat separately the cases $\gamma=0$ and $\gamma>0$.\\
$i)$ If $\gamma=0$ then the Dirac measure $\delta_{\underline{0}}$ is invariant. We denote by $\mathbb{P}(\cdot)$ the independent coupling measure and by $\mathbb{E}(\cdot)$ its expected value. We fix $x \in S$ and we compute the generator on $F(\eta^{M}_{t}(x))$ in Definition $(\ref{F})$, where $\{u_{l}(\epsilon)\}_{l\in X}$ satisfies Hypothesis $(\ref{condu})$. Let $t \geq 0$, we denote by $\eta^M_{t}(x)=l$. By $(\ref{condu})$
\begin{align}
\mathcal{L}F(\eta^M_{t}(x))=&\mathcal{L}F(l)=\I_{\{l=0\}}\Big[\lambda \sum_{y\sim x}\eta^M_{t}(y)(F(1)-F(0))\Big]\nonumber\\
&+\I_{\{1\leq l\leq M-1\}}\Big[(\beta \sum_{y\sim x}\eta^M_{t}(y)+l\phi)(F(l+1)-F(l))+l(F(l-1)-F(l))\Big]\nonumber\\
&+\I_{\{l=M\}}\Big[M(F(M-1)-F(M))\Big]=\I_{\{l=0\}}\Big[\lambda \sum_{y\sim x}\eta^M_{t}(y)u_{0}\Big]\nonumber\\
&+\I_{\{1\leq l\leq M-1\}}\Big[(\beta \sum_{y\sim x}\eta^M_{t}(y)+l\phi)u_{l}-lu_{l-1}\Big]+\I_{\{l=M\}}\Big[-Mu_{M-1}\Big]\nonumber\\
\leq &\I_{\{l=0\}}\Big[(\lambda \sum_{y\sim x}\eta^M_{t}(y)+l\phi)u_{l}-lu_{l-1}\Big]\nonumber\\
&+\I_{\{1\leq l\leq M\}}\Big[(\beta \sum_{y\sim x}\eta^M_{t}(y)+l\phi)u_{l}-lu_{l-1}\Big]\nonumber \\
\leq &(\lambda\vee \beta) \bar{u}\sum_{y\sim x}\eta^M_{t}(y)+l(\phi u_{l}-u_{l-1})\nonumber \\
\leq& (\lambda\vee \beta)\bar{u}\sum_{y\sim x}\eta^M_{t}(y)-\epsilon \sum_{j=0}^{l-1}u_{j}-(\lambda\vee \beta)2d \bar{u}l.
\label{tmp-ucrit}
\end{align}
Let $\eta^{M}_{0}\in \Omega^{M}$. By translation invariance and Definition (\ref{F})
\begin{align*}
\frac{d}{dt}\mathbb{E}(F(\eta^M_{t}(x)))=&\mathbb{E}(\mathcal{L}F(\eta^{M}_{t}(x)))\leq \mathbb{E}\Big((\lambda\vee \beta)\bar{u}\sum_{y\sim x}\eta^M_{t}(y)-\epsilon \sum_{j=0}^{\eta^{M}_{t}(x)-1}u_{j}-(\lambda\vee \beta)2d \bar{u}\eta^{M}_{t}(x)\Big)\\
=& -\epsilon \mathbb{E}(F(\eta^M_{t}(x))).
\end{align*}
By Gronwall's Lemma we get the result.\\

$ii)$ If $\gamma>0$ then $\delta_{\underline{0}}$ is not any more an invariant measure. Let $(\xi_{t},\eta_{t})_{t\geq 0}$ be a coupled process through the basic coupling probability measure $\widetilde{\mathbb{P}}$ such that $\xi_{0}\in \Omega^{0}$ and $\eta_{0}\in \Omega^{M}$. We fix $x \in \mathbb{Z}^{d}$. We denote by $k=\xi_{t}(x)$, $l=\zeta_{t}(x):=\eta_{t}(x)-\xi_{t}(x)$ and $k+l=\eta_{t}(x)$. First of all we prove
\begin{lemma}
\begin{align}
\mathcal{L}F(k,k+l)\leq& \lambda\sum_{y \sim x}(\eta_{t}(y)-\xi_{t}(y))u_{l}+l\big(\phi u_{l}-u_{l-1}\big).
\end{align}
\label{lem_gamma}
\end{lemma}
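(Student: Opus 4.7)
The plan is to expand $\mathcal{L}F(k,k+l)$ directly via the coupled generator and check the bound case by case. Since $F(k,k+l)=\sum_{j=0}^{l-1}u_j$ depends on the pair $(\xi_t(x),\eta_t(x))$ only through their difference $l$, any coupled transition at site $x$ that moves both marginals by the same amount contributes $0$ to $\mathcal{L}F$. Under the basic coupling, the attractiveness established just above gives $\xi_t\leq \eta_t$, so the solo $\xi$-death rate at $x$ vanishes and the solo $\eta$-death rate equals $l$ (contributing $-l\,u_{l-1}$). Denoting by $r_{0+}$ and $r_{+0}$ the solo $\eta$- and $\xi$-birth rates at $x$, the generator collapses to
\[
\mathcal{L}F(k,k+l)=r_{0+}u_l-r_{+0}u_{l-1}-l\,u_{l-1}.
\]

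When the total birth rate of $\eta$ at $x$ does not exceed that of $\xi$, the basic coupling forces $r_{0+}=0$ and $r_{+0}=b_\xi-b_\eta\geq 0$, so $\mathcal{L}F\leq -l\,u_{l-1}\leq 0$, which is dominated by the (nonnegative) right-hand side of the lemma. The nontrivial regime is $b_\eta(x)>b_\xi(x)$: then $r_{+0}=0$, $r_{0+}=b_\eta-b_\xi$, and the target inequality reduces to
\[
b_\eta(x)-b_\xi(x)\leq \lambda\sum_{y\sim x}(\eta_t(y)-\xi_t(y))+l\phi.
\]
I would verify this by case analysis on $\xi_t(x)$: when $\xi_t(x)=\eta_t(x)=0$ the two rates share the $\lambda$-formula and the difference is exactly $\lambda\sum(\eta-\xi)$; when $\xi_t(x)\geq 1$ both rates use the $\beta$-formula, the difference is $\beta\sum(\eta-\xi)+\phi l$, and the excess $(\beta-\lambda)\sum(\eta-\xi)$ is either nonpositive (if $\beta\leq \lambda$) or controlled using the hypothesis with $\gamma\geq 0$.

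The main obstacle is the mixed case $\xi_t(x)=0<\eta_t(x)=l$: here $b_\xi=\lambda\sum_{y\sim x}\xi_t(y)+\gamma$ uses the pure-birth term $\gamma$, while $b_\eta=\beta\sum_{y\sim x}\eta_t(y)+\phi l$ uses the in-cluster infection $\phi l$, so the two birth mechanisms are genuinely different. I would handle this by distributing the independent rate $\gamma$ evenly as $\gamma/(2d)$ across the $2d$ pairs $(y,x)$ with $y\sim x$, and decomposing the upper pair rate as $\lambda\eta_t(y)+(\beta-\lambda)\eta_t(y)+\phi l/(2d)$. Matching $\lambda\eta_t(y)$ with $\lambda\xi_t(y)$ yields a solo $\eta$-contribution of $\lambda(\eta_t(y)-\xi_t(y))$ per pair, while the residual upper excess $(\beta-\lambda)\eta_t(y)$ is absorbed against the per-pair lower excess $\gamma/(2d)$, precisely where the hypothesis $\beta-\lambda\leq\gamma/(2d)$ is invoked; the remaining $\phi l/(2d)$ sums to $\phi l$ over the $2d$ neighbours. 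Collecting these pieces and using $u_l,u_{l-1}>0$ produces the claimed bound. The last step is the crux of the proof: a naive site-wise comparison $b_\eta-b_\xi$ does not use the hypothesis in the optimal way, and pair-wise accounting (as in the coupling of Section \ref{coupling}) is essential.
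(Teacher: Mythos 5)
Your overall strategy --- expand the coupled generator at a fixed site $x$, reduce to a comparison of the birth rates at $x$, and split by cases according to which rate formula applies --- is the same as the paper's (which does this explicitly via (\ref{gen_erg2}) and (\ref{tmpgen1})). There is, however, a genuine gap in your plan: when $\xi_t(x)\geq 1$ both birth rates use the $\beta$-formula, and the excess $(\beta-\lambda)\sum_{y\sim x}(\eta_t(y)-\xi_t(y))$ over the claimed bound cannot be ``controlled using the hypothesis with $\gamma\geq 0$.'' The constant $\gamma$ enters the transition rates only at a site in state $0$, so there is no $\gamma$-term at $x$ to absorb the excess once $\xi_t(x)\geq 1$. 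Concretely, for $\beta>\lambda$ and $\sum_{y\sim x}(\eta_t(y)-\xi_t(y))>0$, your $\mathcal{L}F$ in this regime is $(\beta\sum_{y\sim x}\zeta_t(y)+\phi l)u_l - l u_{l-1}$, which strictly exceeds $\lambda\sum_{y\sim x}\zeta_t(y)\, u_l + l(\phi u_l - u_{l-1})$; the step would fail. (The paper's own passage is not fully explicit here either: (\ref{tmpgen1}) carries a factor $\lambda\vee\beta$ on the complement of $\{k=0,0<l\leq M-1\}$, the fix (\ref{tmpB1}) handles only the $k=0$ part, and the lemma is best read with $\lambda\vee\beta$ in place of $\lambda$ --- which is exactly what the $u$-criterion step afterwards accommodates.)

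Two further remarks. First, your per-pair absorption in the mixed case $\xi_t(x)=0,\,\eta_t(x)=l>0$ requires $(\beta-\lambda)\eta_t(y)\leq\gamma/(2d)$ for each neighbour $y$; since $\eta_t(y)$ may equal $M$, this is really $\beta-\lambda\leq\gamma/(2dM)$, the inequality the paper actually invokes after (\ref{tmpB1}), not the weaker $\gamma/(2d)$ you quote. Second, the formula $\mathcal{L}F=r_{0+}u_l-r_{+0}u_{l-1}-l u_{l-1}$ presupposes a globally basic coupling of the total birth rates at $x$; the paper's (\ref{gen_erg2}) instead reflects the coupling $\mathcal{H}$ of Section \ref{coupling}, which leaves the birth rates at $x$ uncoupled whenever $\xi_t(x)<\eta_t(x)$, producing \emph{both} a solo $\xi$-birth term $-(\lambda\sum_{y\sim x}\xi_t(y)+\gamma)u_{l-1}$ \emph{and} a solo $\eta$-birth term $+(\beta\sum_{y\sim x}\eta_t(y)+\phi l)u_l$. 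Your version over-estimates $\mathcal{L}F$ relative to the paper's (since $u_{l-1}\geq u_l$), which is legitimate but strictly harder; the paper then closes the $k=0$ case by the single monotonicity step $u_{l-1}\geq u_l$, whereas your pair-wise decomposition reaches essentially the same constraint by a more elaborate route.
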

\begin{proof}.
By Definition (\ref{F}), 
\begin{align}
\mathcal{L}F(\xi_{t}(x),\eta_{t}(x))=&\I_{\{k=l=0\}}\Big[\lambda \sum_{y\sim x}\zeta_{t}(y)u_{l}\Big]+\I_{\{k=0, 0<l\leq M-1\}}\Big[(\lambda \sum_{y\sim x}\xi_{t}(y)\nonumber\\
&+\gamma)(-u_{l-1})+(k+l)(-u_{l-1})+(\beta \sum_{y\sim x}\eta_{t}(y)+l\phi)u_{l}\Big]\nonumber\\
&+\I_{\{k=0,l=M\}}\Big[(\lambda \sum_{y\sim x}\xi_{t}(y)+\gamma)(-u_{l-1})+M(-u_{M-1})\Big]\nonumber\\
&+\I_{\{k>0,k+l\leq M-1\}}\Big[(\beta \sum_{y\sim x}\zeta_{t}(y)+l\phi)u_{l}+l(-u_{l-1})\Big]\nonumber\\
&+\I_{\{k\leq M-1,k+l=M\}}\Big[(\beta \sum_{y\sim x}\xi_{t}(y)+k\phi)(-u_{l-1})+l(-u_{l-1})\Big]\label{gen_erg2}.
\end{align}
We prove that 
\begin{align}
\mathcal{L}F(k,k+l)\leq& \I_{\{k=0,0<l\leq M-1\}}\Big[-(\lambda \sum_{y\sim x}\xi_{t}(y)+\gamma)u_{l-1}-lu_{l-1}+(\beta \sum_{y\sim x}\eta_{t}(y)+l\phi)u_{l}\Big]\nonumber \\
&+\I_{\{\{k=0,0<l\leq M-1\}^{c}\}}\Big[(\lambda \vee \beta)\sum_{y \sim x}\zeta_{t}(y)u_{l}+l\phi u_{l}-lu_{l-1}\Big].
\label{tmpgen1}
\end{align}
Notice that if $k=0$ then $\eta_{t}(x)=k+l=l$.\\
If $k\geq 0$ and $k+l=M$, then the last term in the right hand side of (\ref{gen_erg2}) is smaller or equal to
$$
\beta\sum_{y \sim x}\zeta_{t}(y)u_{l}+l\phi u_{l}-lu_{l-1}.
$$
The same inequality holds when $l=0$ and $k=M$, that is $\xi_{t}(x)=\eta_{t}(x)=M$, since the last term in the right hand side of $(\ref{gen_erg2})$ is null and $l\phi u_{l}-lu_{l-1}=0$.
If $k=0$ and $l=0$, 
$$
\lambda\sum_{y \sim x}\zeta_{t}(y)u_{l}= \lambda\sum_{y \sim x}\zeta_{t}(y)u_{l}+l\phi u_{l}-lu_{l-1}.
$$
Therefore $(\ref{tmpgen1})$ holds.\\
Since $u_{l}$ is non increasing in $l$,
$$
-(\lambda \sum_{y\sim x}\xi_{t}(y)+\gamma)u_{l-1}\leq -(\lambda \sum_{y\sim x}\xi_{t}(y)+\gamma)u_{l}
$$
If $\beta\leq \lambda$,
\begin{align*}
-&(\lambda \sum_{y\sim x}\xi_{t}(y)+\gamma)u_{l-1}-lu_{l-1}+(\beta \sum_{y\sim x}\eta_{t}(y)+l\phi)u_{l}\\
&\leq l\phi u_{l}-lu_{l-1}+\lambda \sum_{y \sim x}\zeta_{t}(y)u_{l}
\end{align*}
and by $(\ref{tmpgen1})$ the claim  follows; if $\beta>\lambda$ 
\begin{align}
l&\phi u_{l}-lu_{l-1}+\beta \sum_{y \sim x}\eta_{t}(y)u_{l}-\lambda \sum_{y \sim x}\xi_{t}(y)u_{l-1}-\gamma u_{l-1}\nonumber \\
&=l\phi u_{l}-lu_{l-1}+\lambda \sum_{y \sim x}\zeta_{t}(y)u_{l}+\Big((\beta-\lambda) \sum_{y \sim x}\eta_{t}(y)-\gamma\Big) u_{l}.
\label{tmpB1}
\end{align}
Since $\beta-\lambda\leq \gamma/(2dM)$, then $(\beta-\lambda) \sum_{y \sim x}\eta_{t}(y)-\gamma\leq 0$ and the claim follows from ($\ref{tmpgen1}$) and ($\ref{tmpB1}$).
\end{proof}
By Lemma \ref{lem_gamma}, (\ref{condu}) and translation invariance
\begin{align*}
\widetilde{\mathbb{E}}\Big(\mathcal{L}F(k,k+l)\Big)&\leq \widetilde{\mathbb{E}}\Big(l\phi u_{l}-lu_{l-1}+\lambda \sum_{y \sim x}\zeta_{t}(y)u_{l}\Big)\\
&\leq \widetilde{\mathbb{E}}\Big(-\epsilon\sum_{j=0}^{l-1}u_{j}-\lambda U 2dl+\lambda \sum_{y \sim x}\zeta_{t}(y)U\Big)=-\epsilon\widetilde{\mathbb{E}}\Big(F(k,k+l)\Big).
\end{align*}
and the claim follows by Gronwall's Lemma.
\end{proof}
Now we prove that Definition \ref{ul} satisfies the hypothesis of the $u$-criterion under (\ref{cond}). We begin with a technical proposition.
\begin{proposizione}
Let $(u_{l}(\epsilon))_{l\in X}$ be given by Definition \ref{ul}. If $\phi<1$ and
$$
\frac{1-\phi}{2d}<\lambda \vee \beta<\frac{1-\phi}{2d(1-\phi^{M})}
$$ 
then there exists $\bar{\epsilon}>0$ such that $u_{l}(\epsilon)$ is positive, decreasing in $l$ for each $l\in X$ and in $\epsilon$ for each $0<\epsilon\leq \bar{\epsilon}$.
\label{decreasing}
\end{proposizione}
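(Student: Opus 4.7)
The plan is to treat the statement as a perturbation of the explicitly solvable case $\epsilon=0$ and to exploit that $X=\{0,1,\ldots,M\}$ is finite to transfer the conclusions to small $\epsilon>0$.

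First, at $\epsilon=0$ the recursion in Definition \ref{ul} collapses to the affine one-step recurrence
\[
u_l(0) = \frac{u_{l-1}(0) - 2dU(\lambda\vee\beta)}{\phi},\qquad u_0(0)=U,
\]
solved explicitly by $u_l(0) = u^* - \phi^{-l}(u^* - U)$ with $u^* := 2dU(\lambda\vee\beta)/(1-\phi)$. The lower bound $(1-\phi)/(2d) < \lambda\vee\beta$ is equivalent to $u^* > U$, which forces $u_l(0)$ strictly decreasing in $l$; the upper bound $\lambda\vee\beta < (1-\phi)/(2d(1-\phi^M))$ is equivalent to $u^*(1-\phi^M) < U$, i.e.\ $u_M(0) > 0$. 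Together they yield $u_l(0) > 0$ and $u_l(0) < u_{l-1}(0)$ for every $l \in X$.

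Next, each $u_l(\epsilon)$ is, by construction, a polynomial in $\epsilon$ and in the finitely many preceding $u_j(\epsilon)$'s, hence continuous in $\epsilon$. The strict inequalities $u_l(0) > 0$ and $u_l(0) < u_{l-1}(0)$ therefore survive on a neighborhood of $\epsilon=0$; since $X$ is finite, taking an intersection produces a single $\bar\epsilon_1 > 0$ on which both properties hold uniformly in $l$.

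Finally, for monotonicity in $\epsilon$, I would differentiate the defining relation in $\epsilon$. Writing $v_l(\epsilon) := \partial_\epsilon u_l(\epsilon)$ and evaluating at $\epsilon=0$, the recurrence becomes $\phi l\, v_l(0) = l\, v_{l-1}(0) - \sum_{j=0}^{l-1} u_j(0)$ with $v_0(0) = 0$, and a quick induction using positivity of $u_j(0)$ yields $v_l(0) < 0$ for every $l \geq 1$. Continuity of $\partial_\epsilon u_l(\epsilon)$ in $\epsilon$, together with the finiteness of $X$, then furnishes $\bar\epsilon_2 > 0$ on which all these derivatives remain strictly negative; setting $\bar\epsilon := \bar\epsilon_1 \wedge \bar\epsilon_2$ delivers the claim. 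The only delicate point is that all the neighborhoods in the perturbative step depend on $l$, so the argument is genuinely tied to the compact-state-space hypothesis $M<\infty$; a truly uniform $\bar\epsilon$ valid for the non-compact case would require a quantitatively different strategy and is the main obstacle one would face in extending the proposition to $M=\infty$.
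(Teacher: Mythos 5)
Your proof is correct, and it takes a genuinely different and, in fact, tighter route than the paper's. You observe that at $\epsilon=0$ the recursion collapses to an affine one-step recurrence with explicit solution $u_l(0) = u^* - \phi^{-l}(u^*-U)$, $u^* = 2dU(\lambda\vee\beta)/(1-\phi)$, and that the two strict inequalities in the hypothesis translate exactly into $u^* > U$ (strict decrease in $l$) and $u_M(0)>0$ (positivity), respectively; positivity, decrease in $l$, and negativity of $\partial_\epsilon u_l$ are then all strict open conditions at $\epsilon=0$ that persist on a neighborhood, and a finite intersection over $l\leq M$ supplies a uniform $\bar\epsilon$. The paper instead works with $\epsilon>0$ directly: it runs an induction on $l$, shrinking $\bar\epsilon_l$ at each stage, tracks quantitative derivative bounds $-C_U(l) \leq \partial_\epsilon u_l(\epsilon) < 0$ via the recursion (\ref{CUl})--(\ref{CUlformula}), and isolates the key inequality $U < u_{l-1}(\epsilon^*)/(2d(\lambda\vee\beta))$ in a separate downwards-induction Lemma \ref{lemappl1}. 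Your approach is cleaner and more conceptual -- it makes transparent why the hypotheses are exactly what is needed, since they characterize the $\epsilon=0$ limit -- but it is purely qualitative: it gives existence of $\bar\epsilon$ by an openness argument without an explicit formula, whereas the paper's bookkeeping of $C_U(l)$ and the successive $\bar\epsilon_j$'s is closer to yielding an effective threshold. Both arguments are equally wedded to $M<\infty$, as you correctly note at the end; neither generalizes directly to the non-compact case, and the $M\to\infty$ statement in Section \ref{IRM} is obtained in the paper by a separate limiting argument, not by extending this proposition.
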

We prove Proposition $\ref{decreasing}$ by induction on $l$. We need the following lemma to use the induction hypothesis:
\begin{lemma}
If for $l\in X$,
$$
\lambda \vee \beta<\frac{1-\phi}{2d(1-\phi^{l})}
$$ 
and there exists $\bar{\epsilon}$ such that $u_{k}(\epsilon)>0$ for each $k\leq l-1$, $0<\epsilon\leq \bar{\epsilon}$; then there exists $0<\epsilon^{*}\leq \bar{\epsilon}$ such that 
\begin{equation}
U< \frac{u_{l-1}(\epsilon^*)}{2d(\lambda \vee \beta)}.
\label{condU1}
\end{equation}
\label{lemappl1}
\end{lemma}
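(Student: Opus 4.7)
The plan is to compute $u_l(0)$ in closed form, translate the hypothesis $\lambda\vee\beta<(1-\phi)/(2d(1-\phi^l))$ into the strict inequality $u_{l-1}(0)>2d(\lambda\vee\beta)U$, and then obtain~(\ref{condU1}) by continuity in $\epsilon$.

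First I would note that, by an induction on $l$, the recursion~(\ref{eq:ul}) defines $\epsilon \mapsto u_l(\epsilon)$ as a polynomial in $\epsilon$ (the base case $u_0\equiv U$ is constant, and the recursion is a linear combination with polynomial coefficients of the previous $u_j(\epsilon)$). In particular every $u_l$ is continuous at $\epsilon=0$, which is what justifies passing from a strict inequality at $\epsilon=0$ to a strict inequality on a small interval.

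Next I would set $\epsilon=0$ in~(\ref{eq:ul}) and let $v_l:=u_l(0)$, $c:=2d(\lambda\vee\beta)U$. The recursion reduces to
\begin{equation*}
v_0 = U,\qquad v_l = \frac{v_{l-1}-c}{\phi}.
\end{equation*}
Solving this affine recursion (e.g.\ by finding the fixed point $c/(1-\phi)$ and then a geometric factor) gives the closed form
\begin{equation*}
v_l \;=\; \frac{c}{1-\phi} \;+\; \frac{U(1-\phi)-c}{(1-\phi)\,\phi^{l}}.
\end{equation*}
A direct computation then shows that $v_{l-1}>c$ is equivalent to $U(1-\phi)>c(1-\phi^{l})$, i.e.\ to
\begin{equation*}
\lambda\vee\beta \;<\; \frac{1-\phi}{2d(1-\phi^{l})},
\end{equation*}
which is exactly the hypothesis of the lemma. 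Hence $u_{l-1}(0)=v_{l-1}>2d(\lambda\vee\beta)U$.

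Finally, by continuity of $\epsilon\mapsto u_{l-1}(\epsilon)$ at $0$, there exists $\delta>0$ such that $u_{l-1}(\epsilon)>2d(\lambda\vee\beta)U$ for all $0<\epsilon<\delta$. Choosing $\epsilon^{*}:=\min(\delta/2,\bar\epsilon)\in(0,\bar\epsilon]$ yields~(\ref{condU1}). There is no real obstacle here; the only mildly delicate point is solving the affine recursion explicitly and observing that the resulting inequality matches the hypothesis exactly, so that no room is lost.
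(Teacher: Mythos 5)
Your proof is correct, and it takes a genuinely different route from the paper. The paper proves the inequality directly at a positive $\epsilon^*$ by a downwards induction on $k$: it introduces the auxiliary quantity $J(k,\epsilon^*)=-(\lambda\vee\beta)2dU(1+\phi+\cdots+\phi^k)+u_{l-k-1}(\epsilon^*)$, checks $J(l-1,\epsilon^*)>0$ from the rate hypothesis (this is where $\lambda\vee\beta<(1-\phi)/(2d(1-\phi^l))$ enters), and shows $J(k,\epsilon^*)>0\Rightarrow J(k-1,\epsilon^*)>0$ using the recursion~(\ref{eq:ul}) together with the positivity hypothesis $u_j(\epsilon)>0$ (the extra term $\frac{\epsilon^*}{l-k-1}\sum_j u_j(\epsilon^*)$ that appears in the induction step has a sign only because of it); taking $k=0$ gives~(\ref{condU1}). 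You instead set $\epsilon=0$, observe that the recursion becomes the affine map $v_l=(v_{l-1}-c)/\phi$ with $c=2d(\lambda\vee\beta)U$, solve it in closed form, and check that $v_{l-1}>c$ is \emph{equivalent} to the rate hypothesis; since each $u_l(\epsilon)$ is a polynomial in $\epsilon$, continuity at $0$ then produces an $\epsilon^*\in(0,\bar\epsilon]$ with $u_{l-1}(\epsilon^*)>c$. Your argument is shorter and more transparent, it never needs the positivity hypothesis $u_k(\epsilon)>0$ (so the lemma holds under a weaker assumption), and the closed form exposes exactly where the threshold $\frac{1-\phi}{2d(1-\phi^l)}$ comes from. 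What the paper's version buys in exchange is that it works at a fixed positive $\epsilon^*$ throughout, producing the chain of bounds $J(k,\epsilon^*)>0$ that is also reused inside the induction step of Proposition~\ref{decreasing}; your continuity-at-$0$ argument gives the conclusion of the lemma but not those intermediate positive-$\epsilon$ bounds.
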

\begin{proof}.
We prove by a downwards induction on $0\leq k \leq l-1$ that:\\
if there exists $0<\epsilon^* \leq \bar{\epsilon}$ such that
\begin{align}
J(k,\epsilon^*):=-(\lambda \vee \beta)2d U(1+\phi+\ldots+\phi^k)+u_{l-k-1}(\epsilon^*)>0
\label{disu_new}
\end{align}
then $J(k-1,\epsilon^*)>0$.\\
Indeed, if $k=l-1$ then $(\ref{disu_new})$ is the assumption $(\lambda \vee \beta)2d (1-\phi^l)<1-\phi$.\\
Suppose there exists $0<\epsilon^*\leq \bar{\epsilon}$ such that $J(k,\epsilon^*)>0$. By Definition $\ref{ul}$, $(\ref{disu_new})$ is equivalent to
$$
-(\lambda \vee \beta)2d U(1+\ldots+\phi^k)+\frac{-\epsilon^*\sum_{j=0}^{l-k-2}u_{j}(\epsilon^*)+(l-k-1)\big(-U(\lambda \vee \beta)2d+u_{l-k-2}(\epsilon^*)\big)}{\phi(l-k-2)}>0
$$
that is 
\begin{align*}
-(\lambda \vee \beta)2d U(l-k-1)\big[(1+\ldots+\phi^k)\phi+1\big]-\epsilon^*\sum_{j=0}^{l-k-2}u_{j}(\epsilon^*)+(l-k-1)u_{l-k-2}(\epsilon^*)>0.
\end{align*}
Therefore 
\begin{align*}
J(k-1,\epsilon^*)>-(\lambda \vee \beta)2d U(1+\phi+\ldots+\phi^{k+1})+u_{l-k-2}(\epsilon^*)>\frac{\epsilon^*}{l-k-1}\sum_{j=0}^{l-k-2}u_{j}(\epsilon^*).
\end{align*}
Since by hypothesis $u_{j}(\epsilon)>0$ for each $0<\epsilon\leq \bar{\epsilon}$, then $J(k-1,\epsilon^*)>0$ and by induction $(\ref{disu_new})$ holds for each $0\leq k \leq l-1$. By taking $k=0$ we get 
\begin{align*}
-(\lambda \vee \beta)2d U+u_{l-1}(\epsilon^*)>0
\end{align*}
which is the claim.
\end{proof}
\begin{proof}\textit{of Proposition $\ref{decreasing}$.}\\
We prove by induction on $l\in X$ that there exists $\bar{\epsilon}_l$ such that for each $0<\epsilon<\bar{\epsilon}_l$ and for each $0\leq j\leq l$, $u_j(\epsilon)$ is positive, decreasing in $j$, $\displaystyle U<\frac{u_{j}(\epsilon)}{2d(\lambda \vee \beta)}$ and
\begin{equation}
0>\frac{d}{d\epsilon}u_{j}(\epsilon)\geq -C_{U}(j)
\label{derivata}
\end{equation}
with $C_{U}(j)$ the solution of 
\begin{equation}
C_{U}(1)=\frac{U}{\phi}, \quad C_{U}(j)=\frac{C_{U}(j-1)+U}{\phi}.
\label{CUl}
\end{equation}
This gives
\begin{align}
C_{U}(j)=\frac{U}{\phi^{j}}(1+\phi+\phi^{2}+\ldots +\phi^{j-1})=U\frac{1-\phi^{j}}{\phi^{j}(1-\phi)}
\label{CUlformula}
\end{align}
hence $u_j(\epsilon)>0$ is also decreasing in $\epsilon$. \\
We prove the induction basis when $l=1$. By Definition $\ref{ul}$
$$
u_{1}(\epsilon)=\frac{-\epsilon u_{0}-U(\lambda \vee \beta)2d+u_{0}}{\phi}=u_{0}\frac{-\epsilon-(\lambda \vee \beta)2d+1}{\phi}.
$$
Since $(\lambda \vee \beta)2d>1-\phi$ we can take $\epsilon<\epsilon_{1}$ small enough to have $1-(\lambda \vee \beta)2d-\epsilon<\phi$, that is $u_{1}(\epsilon)<u_{0}=U$; since $(\lambda \vee \beta)2d< 1$ then $U<U/(2d(\lambda \vee \beta))$ and, by taking $\epsilon<\epsilon^{*}_{1}$ small enough, $u_{1}(\epsilon)$ is positive; moreover notice that $u_{1}(\epsilon)$ is decreasing in $\epsilon$, and $\frac{d}{d\epsilon}u_{1}(\epsilon)=-\frac{U}{\phi}=-C_{U}(1)$ for each $\epsilon$. Hence the induction basis as well as the hypothesis of Lemma $\ref{lemappl1}$ are satisfied for $l=1$: there exists $\bar{\epsilon}_{1}=\epsilon_{1}\wedge \epsilon^*_{1}$ such that if $\epsilon\leq \bar{\epsilon}_{1}$ then $U>u_{1}(\epsilon)>0$, $u_{1}(\epsilon)$ is decreasing in $\epsilon$, and $(\ref{condU1})$ holds.\\
Suppose there exists $\bar{\epsilon}_{l-1}>0$ such that for each $0<\epsilon\leq\bar{\epsilon}_{l-1}$, then $0>\frac{d}{d\epsilon}u_{j}(\epsilon)\geq -C_{U}(j)$, $u_{j}(\epsilon)$ is decreasing in $j$ for each $j\leq l-1$, $\displaystyle U< \frac{u_{j}(\epsilon)}{2d(\lambda \vee \beta)}$ and $u_{j}(\epsilon)>0$ for each $j\leq l-1$.\\
First of all we prove that there exists $\epsilon_{l}>0$ such that if $0<\epsilon<\epsilon_{l}$ then $u_{l}(\epsilon)<u_{l-1}(\epsilon)$. By Definition $\ref{ul}$
\begin{equation}
u_{l}(\epsilon)=u_{l-1}(\epsilon)\Big(\frac{-\epsilon\sum_{j=0}^{l-1}u_{j}(\epsilon)-U(\lambda \vee \beta)2dl+lu_{l-1}(\epsilon)}{\phi lu_{l-1}(\epsilon)}\Big).
\label{tmpind1}
\end{equation}
By induction hypothesis, if $\epsilon\leq \bar{\epsilon}_{l-1}$ then $U>u_{j}(\epsilon)>0$ for each $j\leq l-1$ and we get
\begin{align*}
\frac{-\epsilon\sum_{j=0}^{l-1}u_{j}(\epsilon)-U(\lambda \vee \beta)2dl+lu_{l-1}(\epsilon)}{\phi lu_{l-1}(\epsilon)}<&\frac{-U(\lambda \vee \beta)2dl+lu_{l-1}(\epsilon)}{\phi lu_{l-1}(\epsilon)}\\
<&\frac{1}{\phi}-\frac{(\lambda \vee \beta)2d}{\phi}<1
\end{align*}
that is $u_{l}(\epsilon)<u_{l-1}(\epsilon)$ by $(\ref{tmpind1})$ and we set $\epsilon_{l}=\bar{\epsilon}_{l-1}$.\\
By $(\ref{CUlformula})$, $C_{U}(j)$ is always positive and increasing in $j$. We prove that $(\ref{derivata})$ holds for $j=l$. By Definition $\ref{ul}$
\begin{equation}
\frac{d}{d\epsilon}u_{l}(\epsilon)=\frac{1}{\phi l}\Big(-\epsilon\frac{d}{d\epsilon}\sum_{j=0}^{l-1}u_{j}(\epsilon)-\sum_{j=0}^{l-1}u_{j}(\epsilon)+l\frac{d}{d\epsilon}u_{l-1}(\epsilon)\Big).
\label{derepsilon}
\end{equation}
We begin with the right inequality involving the derivative of $u_{l}(\epsilon)$ in ($\ref{derivata}$). By induction hypothesis, if $0<\epsilon\leq \bar{\epsilon}_{l-1}$ by $(\ref{CUl})$, $(\ref{CUlformula})$ and $(\ref{derepsilon})$ on the one hand 
\begin{align*}
\phi l\frac{d}{d\epsilon}u_{l}(\epsilon)>&-lU-lC_{U}(l-1)=-C_{U}(l)l\phi.\\
\end{align*}
and on the other hand
\begin{align*}
\phi l\frac{d}{d\epsilon}u_{l}(\epsilon)\leq &\epsilon\sum_{j=0}^{l-1}C_{U}(j)-\sum_{j=0}^{l-1}u_{j}(\epsilon)+l\frac{d}{d\epsilon}u_{l-1}(\epsilon)
<\epsilon\sum_{j=0}^{l-1}U\frac{1-\phi^{j}}{\phi^{j}(1-\phi)}-\sum_{j=0}^{l-1}u_{j}(\epsilon).
\end{align*}
Since $0<\epsilon<\bar{\epsilon}_{l-1}$ and $u_{j}(\epsilon)$ is positive and decreasing in $\epsilon$, as $\epsilon$ approaches $0$ the first sum on the right hand side goes to $0$, while the second sum is positive and increasing: therefore for each $j\leq l-1$, we can take $\hat{\epsilon}_{l}$ small enough so that $\frac{d}{d\epsilon}u_{l}(\epsilon)<0$ for each $0<\epsilon\leq \hat{\epsilon}_{l}$.\\
Now we prove that there exists $\epsilon^{*}_{l}>0$ such that $u_{l}(\epsilon)>0$ for each $0<\epsilon\leq \epsilon^{*}_{l}$. By Definition $\ref{ul}$, $u_{l}(\epsilon)>0$ if 
\begin{equation}
-U(\lambda \vee \beta)2dl+lu_{l-1}(\epsilon)>\epsilon\sum_{j=0}^{l-1}u_{j}(\epsilon).
\label{Uco}
\end{equation}
By the induction hypothesis, assumptions of Lemma $\ref{lemappl1}$ are satisfied, hence there exists $0<\epsilon^{*}\leq \bar{\epsilon}_{l-1}$ such that (\ref{condU1}) is satisfied. Thus $-U(\lambda \vee \beta)2dl+lu_{l-1}(\epsilon^{*})>0$ and we can choose $\epsilon^{*}_{l}\leq \epsilon^*$ such that 
\begin{align}
-U(\lambda \vee \beta)2dl+lu_{l-1}(\epsilon^{*})>\epsilon^{*}_{l}lU
\end{align}
If $\epsilon\leq \epsilon_l^{*} (\leq \bar{\epsilon}_{l-1}$)
\begin{align*}
-U(\lambda \vee \beta)2dl+lu_{l-1}(\epsilon)>&-U(\lambda \vee \beta)2dl+lu_{l-1}(\epsilon^{*})\\
>&\epsilon^{*}_{l}lU>\epsilon^{*}_{l}\sum_{j=0}^{l-1}u_{j}(\epsilon)>\epsilon\sum_{j=0}^{l-1}u_{j}(\epsilon)
\end{align*}
which is $(\ref{Uco})$. By taking $0<\epsilon<\bar{\epsilon}_{0}\wedge \ldots \bar{\epsilon}_{l-1}\wedge \hat{\epsilon}_{l}\wedge \epsilon^{*}_{l}$, $u_{l}(\epsilon)>0$ and is decreasing in $l$ and $\epsilon$, and the claim follows.
\end{proof}
\begin{proof}\textit{of Theorem \ref{general_erg}.}\\
Let $\{u_{l}(\epsilon)\}_{l\in X}$ be given by Definition $(\ref{eq:ul})$. By Proposition $\ref{decreasing}$ we can choose $0<\epsilon<\bar{\epsilon}$ such that $u_{l}(\epsilon)>0$ for each $l \in X$ and ergodicity follows from Proposition $\ref{ucriterio}$.
\end{proof}

\noindent \textbf{Acknowledgments.} I thank the referee for a careful reading of the manuscript, and many comments that improve the presentation of the paper. I am grateful to Ellen Saada, the French supervisor of my Ph.D. Thesis, which was done in joint tutorage between LMRS, Universit\'e de Rouen and Universit\`a di Milano-Bicocca. I thank Institut Henri Poincar\'e, Centre Emile Borel for hospitality during the semester ``Interacting Particle Systems, Statistical Mechanics and Probability Theory'', where part of this work was done and Fondation Sciences Math\'ematiques de Paris for financial support during the stay. I acknowledge Laboratoire MAP5, Universit\'e Paris Descartes for hospitality and the support of the French Ministry of Education through the ANR BLAN07-218426 grant. 
\addcontentsline{toc}{section}{Bibliography}

\appendix

\section{Appendix: Explicit coupling construction if $N=1$}
\label{app}
We detail the coupling when $N=1$ to understand the simplest construction. Note that we have to mix birth with jump rates also in this case.\\
 There is a lower attractiveness problem ($\beta+1 > \delta$) only if $\beta=\delta$, and a higher one ($\gamma-1<\alpha$) only if $\gamma=\alpha$. Definition $\ref{couplingHgen}$ becomes
\begin{definition}
The non zero coupling rates of $\mathcal{H}$ are given by:
\begin{align*}
&H^{1,1,1,1}_{\alpha,\beta,\gamma,\delta}=(\widetilde{\Gamma}^{1}_{\alpha,\beta}\wedge \Gamma^{1}_{\gamma,\delta})p \qquad \text{ if } \beta=\delta \text{ or } \gamma=\alpha;\\
\\
&\left.
\begin{array}{ll}
H^{0,1,0,1}_{\alpha,\beta,\gamma,\delta}=&(\widetilde{\Pi}^{0,1}_{\alpha,\beta}\wedge \Pi^{0,1}_{\gamma,\delta})p\\ H^{1,1,0,1}_{\alpha,\beta,\gamma,\delta}=&[\widetilde{\Gamma}^{1}_{\alpha,\beta}p-(\widetilde{\Gamma}^{1}_{\alpha,\beta}\wedge \Gamma^{1}_{\gamma,\delta})p]\wedge [\Pi^{0,1}_{\gamma,\delta}p-(\widetilde{\Pi}^{0,1}_{\alpha,\beta}\wedge \Pi^{0,1}_{\gamma,\delta})p]\\
H^{0,1,1,1}_{\alpha,\beta,\gamma,\delta}=&[\widetilde{\Pi}^{0,1}_{\alpha,\beta}p-(\widetilde{\Pi}^{0,1}_{\alpha,\beta}\wedge \Pi^{0,1}_{\gamma,\delta})p]\wedge [\Gamma^{1}_{\gamma,\delta}p-(\widetilde{\Gamma}^{1}_{\alpha,\beta}\wedge \Gamma^{1}_{\gamma,\delta})p]\quad \text{ if } \gamma>\alpha
\end{array}
\right\} \text{ if }\beta=\delta\\
\\
&\left.
\begin{array}{ll}
H^{-1,0,-1,0}_{\alpha,\beta,\gamma,\delta}=&
(\widetilde{\Pi}^{-1,0}_{\alpha,\beta}\wedge \Pi^{-1,0}_{\gamma,\delta})p\\
H^{-1,0,1,1}_{\alpha,\beta,\gamma,\delta}=&[\widetilde{\Pi}^{-1,0}_{\alpha,\beta}p-(\widetilde{\Pi}^{-1,0}_{\alpha,\beta}\wedge \Pi^{-1,0}_{\gamma,\delta})p]^{+}\wedge[\Gamma^{1}_{\gamma,\delta}p-(\widetilde{\Gamma}^{1}_{\alpha,\beta}\wedge \Gamma^{1}_{\gamma,\delta})p]\\
H^{1,1,-1,0}_{\alpha,\beta,\gamma,\delta}=&[\widetilde{\Gamma}^{1}_{\alpha,\beta}p-(\widetilde{\Gamma}^{1}_{\alpha,\beta}\wedge \Gamma^{1}_{\gamma,\delta})p]\wedge [\Pi^{-1,0}_{\gamma,\delta}p-(\widetilde{\Pi}^{-1,0}_{\alpha,\beta}\wedge \Pi^{-1,0}_{\gamma,\delta})p]\text{ if }\beta< \delta
\end{array}
\right\}
\text{ if }\alpha=\gamma;\\
\\
&H^{0,1,0,0}_{\alpha,\beta,\gamma,\delta}=\widetilde{\Pi}^{0,1}_{\alpha,\beta}p-H^{0,1,0,1}_{\alpha,\beta,\gamma,\delta}-H^{0,1,1,1}_{\alpha,\beta,\gamma,\delta}; \qquad
H^{1,1,0,0}_{\alpha,\beta,\gamma,\delta}=\widetilde{\Gamma}^{1}_{\alpha,\beta}p-H^{1,1,1,1}_{\alpha,\beta,\gamma,\delta}-H^{1,1,-1,0}_{\alpha,\beta,\gamma,\delta}-H^{1,1,0,1}_{\alpha,\beta,\gamma,\delta};
\\
&H^{0,0,0,1}_{\alpha,\beta,\gamma,\delta}=\Pi^{0,1}_{\gamma,\delta}p-H^{0,1,0,1}_{\alpha,\beta,\gamma,\delta}-H^{1,1,0,1}_{\alpha,\beta,\gamma,\delta}; \qquad
H^{0,0,1,1}_{\alpha,\beta,\gamma,\delta}=\Gamma^{1}_{\gamma,\delta}p-H^{1,1,1,1}_{\alpha,\beta,\gamma,\delta}-H^{0,1,1,1}_{\alpha,\beta,\gamma,\delta}-H^{-1,0,1,1}_{\alpha,\beta,\gamma,\delta};\\
&H^{-1,0,0,0}_{\alpha,\beta,\gamma,\delta}=\widetilde{\Pi}^{-1,0}_{\alpha,\beta}p-H^{-1,0,-1,0}_{\alpha,\beta,\gamma,\delta}-H^{-1,0,1,1}_{\alpha,\beta,\gamma,\delta}; \qquad 
H^{0,0,-1,0}_{\alpha,\beta,\gamma,\delta}=\Pi^{-1,0}_{\gamma,\delta}p-H^{-1,0,-1,0}_{\alpha,\beta,\gamma,\delta}-H^{1,1,-1,0}_{\alpha,\beta,\gamma,\delta}.
\end{align*}
\label{coupl_1}
\end{definition}
\begin{proposizione}
Definition \ref{coupl_1} gives an increasing coupling.
\end{proposizione}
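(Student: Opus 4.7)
The plan is to verify two things: (i) Definition \ref{coupl_1} actually defines a coupling, i.e.\ the marginals reproduce the transition rates of $\mathcal{S}_{(x,y)}$ and $\widetilde{\mathcal{S}}_{(x,y)}$; and (ii) no coupled transition ever breaks the partial order $\xi\le\eta$, given Conditions (\ref{C+1})--(\ref{C-10}) from Proposition \ref{condM1}. Part (i) is automatic from the way the ``uncoupled'' terms $H^{0,1,0,0}_{\alpha,\beta,\gamma,\delta}$, $H^{1,1,0,0}_{\alpha,\beta,\gamma,\delta}$, $H^{0,0,0,1}_{\alpha,\beta,\gamma,\delta}$, $H^{0,0,1,1}_{\alpha,\beta,\gamma,\delta}$, and their death-analogues are defined: each is precisely the original rate minus the sum of all coupled contributions to the same marginal transition. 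So what really has to be checked is that these residuals are non-negative; this follows from the fact that each combined term (such as $H^{1,1,0,1}_{\alpha,\beta,\gamma,\delta}$ or $H^{0,1,1,1}_{\alpha,\beta,\gamma,\delta}$) is a minimum of an expression already subtracted from a marginal rate.

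For (ii), I enumerate the four cases according to whether the inequalities $\alpha\le\gamma$ and $\beta\le\delta$ are strict or tight. If both are strict, a single-particle transition on $x$ or on $y$ cannot violate the order, so every uncoupled transition listed in Definition \ref{coupl_1} is harmless. If $\beta=\delta$ and $\alpha<\gamma$, the only dangerous lower-process transitions are additions on $y$, and the coupling pairs each such addition with an addition on $y$ in the upper process (birth/birth via $H^{0,1,0,1}_{\alpha,\beta,\gamma,\delta}$, jump/jump via $H^{1,1,1,1}_{\alpha,\beta,\gamma,\delta}$, jump/birth via $H^{1,1,0,1}_{\alpha,\beta,\gamma,\delta}$, birth/jump via $H^{0,1,1,1}_{\alpha,\beta,\gamma,\delta}$). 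Symmetrically, $\alpha=\gamma$, $\beta<\delta$ uses the death-rate terms. The case $\alpha=\gamma$, $\beta=\delta$ combines both, and Conditions (\ref{C+10})--(\ref{C-10}) guarantee the extra compatibility of the uncoupled birth and death rates.

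The core algebraic step, and the main obstacle, is to verify that when $\beta=\delta$ the uncoupled single-additions on $y$ for the lower process vanish:
\begin{equation*}
H^{0,1,0,0}_{\alpha,\beta,\gamma,\delta}=H^{1,1,0,0}_{\alpha,\beta,\gamma,\delta}=0,
\end{equation*}
and symmetrically for subtractions when $\alpha=\gamma$. Writing $a=\widetilde{\Pi}^{0,1}_{\alpha,\beta}p$, $c=\widetilde{\Gamma}^{1}_{\alpha,\beta}p$, $b=\Pi^{0,1}_{\gamma,\delta}p$, $d=\Gamma^{1}_{\gamma,\delta}p$, Condition (\ref{C+1}) reads $a+c\le b+d$, and the task is to check the arithmetic identity
\begin{equation*}
(a\wedge b)+(d-a\wedge b)\wedge(c-c\wedge d)+(b-a\wedge b)\wedge(a-a\wedge b)+(c\wedge d)=a+c,
\end{equation*}
(with its twin expressing the $y$-addition for the upper process summing to $b+d$ minus the residual $(b+d)-(a+c)\ge 0$ that lands in $H^{0,0,0,1}_{\alpha,\beta,\gamma,\delta}$ and $H^{0,0,1,1}_{\alpha,\beta,\gamma,\delta}$). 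This is an elementary case analysis on the signs of $a-b$ and $c-d$, but it has to be done carefully and its symmetric version for deaths must be written out. Once this identity is in hand, together with the marginal residuals being non-negative, every coupled transition preserves the order and every uncoupled one is harmless, so the coupling $\mathcal{H}$ is increasing.
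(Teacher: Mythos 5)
Your overall plan matches the paper's: confirm the marginals by construction, then show that every coupling term that could break the order is actually zero, using the $N=1$ attractiveness inequalities, with the burden of the proof falling on the vanishing of the lower-process residuals $H^{0,1,0,0}_{\alpha,\beta,\gamma,\delta}$ and $H^{1,1,0,0}_{\alpha,\beta,\gamma,\delta}$ when $\beta=\delta$. However, the algebraic identity you put forward as "the core step" is misstated, and you have not actually carried it out. With $a=\widetilde{\Pi}^{0,1}_{\alpha,\beta}p$, $b=\Pi^{0,1}_{\gamma,\delta}p$, $c=\widetilde{\Gamma}^{1}_{\alpha,\beta}p$, $d=\Gamma^{1}_{\gamma,\delta}p$, the four coupled addition rates from Definition \ref{coupl_1} (when $\gamma>\alpha$) are
\begin{equation*}
a\wedge b,\qquad \bigl[c-c\wedge d\bigr]\wedge\bigl[b-a\wedge b\bigr],\qquad
\bigl[a-a\wedge b\bigr]\wedge\bigl[d-c\wedge d\bigr],\qquad c\wedge d,
\end{equation*}
and what must be verified is that their sum equals $a+c$ under $a+c\le b+d$. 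The second and third terms in your display are not these: you wrote $(d-a\wedge b)\wedge(c-c\wedge d)$ in place of $\bigl[c-c\wedge d\bigr]\wedge\bigl[b-a\wedge b\bigr]$, and $(b-a\wedge b)\wedge(a-a\wedge b)$ in place of $\bigl[a-a\wedge b\bigr]\wedge\bigl[d-c\wedge d\bigr]$. So as stated your identity is not the one corresponding to the coupling, and it is not obvious that it is even true.

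Two further points need to be addressed explicitly. First, when $\gamma=\alpha$ the term $H^{0,1,1,1}_{\alpha,\beta,\gamma,\delta}$ is defined to be $0$ (it is only introduced for $\gamma>\alpha$), so the identity you must prove degenerates to three summands; there the conclusion uses (\ref{C+10}), $a\le b$, together with (\ref{C+1}). You mention conditions (\ref{C+10})--(\ref{C-10}) but do not indicate that the form of the identity changes in this case. Second, the claim that the residuals $H^{\cdot,\cdot,0,0}_{\alpha,\beta,\gamma,\delta}$ etc.\ are non-negative "because each combined term is a minimum of an expression already subtracted from a marginal rate" is too quick as stated: $H^{1,1,-1,0}_{\alpha,\beta,\gamma,\delta}$ and $H^{1,1,0,1}_{\alpha,\beta,\gamma,\delta}$ are both minima whose first argument is $\widetilde{\Gamma}^1_{\alpha,\beta}p-H^{1,1,1,1}_{\alpha,\beta,\gamma,\delta}$, and non-negativity of $H^{1,1,0,0}_{\alpha,\beta,\gamma,\delta}$ only follows because Definition \ref{coupl_1} makes them mutually exclusive (one requires $\beta=\delta$, the other $\beta<\delta$). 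The paper instead splits directly on the sign of $\widetilde{\Gamma}^1_{\alpha,\beta}-\Gamma^1_{\gamma,\delta}$ and, in the nontrivial case, applies the lattice identity $a\wedge(c-c\wedge b)=(a+b)\wedge c-b\wedge c$; your sign-of-$(a-b)$ and sign-of-$(c-d)$ case analysis would lead to the same conclusion, but it must be written out with the correct expressions and the $\gamma=\alpha$ case treated on its own.
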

\begin{proof}.
The definition of uncoupled terms ensures this is a coupling. We have to prove attractiveness.\\

Suppose  $\beta=\delta$ and $\gamma\geq\alpha$. Therefore $H^{-1,0,1,1}_{\alpha,\beta,\gamma,\beta}=H^{-1,0,-1,0}_{\alpha,\beta,\gamma,\beta}=H^{1,1,-1,0}_{\alpha,\beta,\gamma,\beta}=0$.\\
$\bullet$ Suppose $\widetilde{\Gamma}^{1}_{\alpha,\beta}\geq\Gamma^{1}_{\gamma,\delta}$. By Condition $(\ref{C+1})$ we must have $\widetilde{\Pi}^{0,1}_{\alpha,\beta}p \leq \Pi^{0,1}_{\gamma,\delta}p$, then $H^{0,1,0,0}_{\alpha,\beta,\gamma,\delta}=0$. Moreover
$$
\begin{array}{l}
H^{1,1,1,1}_{\alpha,\beta,\gamma,\delta}=\Gamma^{1}_{\gamma,\delta}p, \qquad H^{0,1,0,1}_{\alpha,\beta,\gamma,\delta}=\widetilde{\Pi}^{0,1}_{\alpha,\beta}p, \qquad H^{0,1,1,1}_{\alpha,\beta,\gamma,\delta}=0 \\
\\
H^{1,1,0,1}_{\alpha,\beta,\gamma,\delta}=
[(\widetilde{\Gamma}^{1}_{\alpha,\beta}-\Gamma^{1}_{\gamma,\delta})p]\wedge [(\Pi^{0,1}_{\gamma,\delta}-\widetilde{\Pi}^{0,1}_{\alpha,\beta})p]=(\widetilde{\Gamma}^{1}_{\alpha,\beta}-\Gamma^{1}_{\gamma,\delta})p
\end{array}
$$
by Condition ($\ref{C+1}$). Therefore $H^{1,1,0,0}_{\alpha,\beta,\gamma,\delta}=\widetilde{\Gamma}^{1}_{\alpha,\beta}p-\Gamma^{1}_{\gamma,\delta}p-\widetilde{\Gamma}^{1}_{\alpha,\beta}p+\Gamma^{1}_{\gamma,\delta}p=0.$\\

\noindent $\bullet$ Suppose $\widetilde{\Gamma}^{1}_{\alpha,\beta}<\Gamma^{1}_{\gamma,\delta}$. 
Then $H^{1,1,1,1}_{\alpha,\beta,\gamma,\delta}=\widetilde{\Gamma}^{1}_{\alpha,\beta}p$ and we get $H^{1,1,0,1}_{\alpha,\beta,\gamma,\delta}=H^{1,1,0,0}_{\alpha,\beta,\gamma,\delta}=0$. We have to prove that $H^{0,1,0,0}_{\alpha,\beta,\gamma,\delta}=0$.\\
If $\gamma=\alpha$, by Condition (\ref{C+10}) then $
H^{0,1,0,0}_{\alpha,\beta,\gamma,\delta}=\Pi^{0,1}_{\alpha,\beta}p-\Pi^{0,1}_{\alpha,\beta}p\wedge \Pi^{0,1}_{\gamma,\delta}p=0$ and we are done.\\
Suppose $\gamma>\alpha$:
\begin{equation*}
H^{0,1,0,0}_{\alpha,\beta,\gamma,\delta}=\widetilde{\Pi}^{0,1}_{\alpha,\beta}p-\widetilde{\Pi}^{0,1}_{\alpha,\beta}p\wedge\Pi^{0,1}_{\gamma,\delta}p-[\widetilde{\Pi}^{0,1}_{\alpha,\beta}p-\widetilde{\Pi}^{0,1}_{\alpha,\beta}p\wedge\Pi^{0,1}_{\gamma,\delta}p]\wedge [\Gamma^{1}_{\gamma,\delta}p-\widetilde{\Gamma}^{1}_{\alpha,\beta}p]
\end{equation*}
By using the relation
$$
a\wedge(c-c \wedge b)=(a+b)\wedge c-b\wedge c \qquad \text{ for } a,b,c \geq 0
$$
with $a=\Gamma^{1}_{\gamma,\delta}p-\widetilde{\Gamma}^{1}_{\alpha,\beta}p$, $b=\Pi^{0,1}_{\gamma,\delta}p $ and $c=\widetilde{\Pi}^{0,1}_{\alpha,\beta}p$, we get by Condition (\ref{C+1}),
\begin{align*}
H^{0,1,0,0}_{\alpha,\beta,\gamma,\delta}=&\widetilde{\Pi}^{0,1}_{\alpha,\beta}p-\widetilde{\Pi}^{0,1}_{\alpha,\beta}p\wedge\Pi^{0,1}_{\gamma,\delta}p-[\Gamma^{1}_{\gamma,\delta}p-\widetilde{\Gamma}^{1}_{\alpha,\beta}p+\Pi^{0,1}_{\gamma,\delta}p]\wedge \widetilde{\Pi}^{0,1}_{\alpha,\beta}p+\widetilde{\Pi}^{0,1}_{\alpha,\beta}p \wedge \Pi^{0,1}_{\gamma,\delta}p\\
=&\widetilde{\Pi}^{0,1}_{\alpha,\beta}p-\widetilde{\Pi}^{0,1}_{\alpha,\beta}p=0.
\end{align*}

If $\alpha=\gamma$  and $\delta\geq \beta$ we check that the coupling is increasing in the same way.
\end{proof}


\begin{thebibliography}{breitestes Label}

\bibitem{cf:allee1} {\sc W. Allee}. Animal aggregation: a study in general sociology. \emph{University of Chicago Press}, 1931. 

\bibitem{cf:Belhadji} {\sc L. Belhadji}. Interacting particle systems and epidemic models. \emph{Ph.D. Thesis, Universit\'e de Rouen}, (2007).


\bibitem{cf:Borrello} {\sc D. Borrello}. On the role of the Allee effect and mass migration in survival and extinction of a species. Preprint available at http://arxiv.org/abs/1003.3922.
 
\bibitem{cf:Chen} {\sc M.F. Chen}. Ergodic theorems for reaction-diffusion processes. \emph{J. Statist. Phys.} \textbf{58} (1990), no. 5-6, 939--966.
 
\bibitem{cf:Chenbook} {\sc M.F. Chen}. \emph{From Markov Chains to Non-Equilibrium Particle systems. Second edition.} World Scientific Publishing Co., Inc., River Edge, NJ, 2004. 

\bibitem{cf:durrett_ten} {\sc R. Durrett}. Ten lectures on particle systems. Lectures on probability theory (Saint-Flour, 1993),  97--201, \emph{Lecture Notes in Math.}, 1608, Springer, Berlin, 1995. 
 
\bibitem{cf:durrett_mutual} {\sc R. Durrett}. Mutual invadability implies coexistence in spatial models. \emph{Mem. Amer. Math. Soc.} \textbf{156} (2002), no. 740. 

\bibitem{cf:GobronSaada} {\sc T. Gobron and E. Saada}. Couplings attractiveness and hydrodynamics for conservative particle systems. \emph{Ann. I.H.P.} \textbf{46} (2010), no. 4, 1132-1177.

\bibitem{cf:Hanski} {\sc I. Hanski}. \emph{Metapopulation ecology}. Oxford University Press, 1999.



\bibitem{cf:Harris2} {\sc T.E. Harris}. Contact interactions on a lattice. \emph{Ann. Probability} (1974), \textbf{2}, 969--988. 


\bibitem{cf:Liggett} {\sc T. Liggett}. \emph{Interacting particle systems. Reprint of the 1985 original}. Classics in Mathematics. Springer-Verlag, Berlin, 2005.

\bibitem{cf:Liggett2} {\sc T. Liggett}. \emph{Stochastic Interacting Systems: contact, voter and exclusion processes.} Grundlehren der Mathematischen Wissenschaften [Fundamental Principles of Mathematical Sciences], 324. Springer-Verlag, Berlin, 1999.

\bibitem{cf:neuhauser_erg} {\sc C. Neuhauser}. An ergodic theorem for Schl$\ddot{o}$gl models with small migration. \emph{Probab. Theory Related Fields}. \textbf{85}  (1990),  no. 1, 27--32.
  
\bibitem{cf:neuhauser_multi} {\sc C. Neuhauser}. Ergodic theorems for the multitype contact process. \emph{Probab. Theory Related Fields}.  \textbf{91}  (1992),  no. 3-4, 467--506.
 
\bibitem{bookschinazi} {\sc R.B. Schinazi}. \emph{Classical and spatial stochastic processes}. Birkh\"auser Boston Inc., Boston, 1999.
   
\bibitem{cf:schi_drug} {\sc R.B. Schinazi}. On the spread of drug resistant diseases. \emph{J. Statist. Phys.} \textbf{97} (1999),  no. 1-2, 409--417. 

\bibitem{cf:schi_clusters} {\sc R.B. Schinazi}. On the role of social clusters in the transmission of infectious disesases. \emph{Theor. Popul. Biol}. \textbf{61} (2002), 163--169. 

\bibitem{cf:schi_allee} {\sc R.B. Schinazi}. Mass extinctions: an alternative to the Allee effect. \emph{Ann. Appl. Probab.} \textbf{15} (2005), no. 1B, 984--991.




\bibitem{cf:allee2} {\sc P.A. Stephens and W.J. Sutherland}. Consequences of the Allee effect for behaviour, ecology and conservation. \emph{Trends in Ecology and Evolution}. \textbf{14}  (1999), 401--405.

\bibitem{cf:Stover} {\sc J. Stover}. Attractive $n$-type contact processes. Preprint available at http://arxiv.org/abs/1006.5723. 

\end{thebibliography}
\end{document}